\newtheorem{theorem}{Theorem}[section]
\newtheorem{assumption}[theorem]{Assumption}
\newtheorem{lemma}[theorem]{Lemma}
\newtheorem{proposition}[theorem]{Proposition}
\newtheorem{corollary}[theorem]{Corollary}
\newtheorem{remark}[theorem]{Remark}
\definecolor{myblue}{rgb}{0,0,0.6}   
\newcommand{\cmark}{\ding{51}}
\newcommand{\xmark}{\ding{55}}
\numberwithin{equation}{section}
\newcolumntype{L}[1]{>{\raggedright\let\newline\\\arraybackslash\hspace{0pt}}m{#1}}
\newcolumntype{C}[1]{>{\centering\let\newline\\\arraybackslash\hspace{0pt}}m{#1}}
\newcolumntype{R}[1]{>{\raggedleft\let\newline\\\arraybackslash\hspace{0pt}}m{#1}}
\definecolor{cbBlue}{RGB}{0, 119, 187}
\definecolor{cbOrange}{RGB}{255, 158, 0}
\definecolor{cbGreen}{RGB}{0, 158, 115}
\newcommand{\lp}{\lesssim_p}
\newcommand{\eremk}{\hbox{}\hfill\rule{0.8ex}{0.8ex}}
\newcommand\dx{\,\mathrm{d}\bx} 
\newcommand\dS{\,\mathrm{d}S}
\newcommand\ds{\,\mathrm{d}s}
\newcommand\dt{\,\mathrm{d}t}
\newcommand\dV{\,\mathrm{d}V}
\newcommand\Deltax{\Delta_{\bx}}
\newcommand\nablax{\nabla_{\bx}}
\newcommand\nablaxh{\nabla_{\bx, h}}
\newcommand\dpth{\partial_{t,h}}
\newcommand\nablaxLDG{\hyperref[EQN::LDG-GRADIENT]{\nabla_{\bx}^{\scaleto{\text{LDG}}{3.5pt}}}}
\newcommand\QT{Q_T}
\newcommand\Pp[2]{\mathbb{P}^{#1}(#2)}
\newcommand\bdh{\uu{d}_h}
\newcommand\mh{m_h^t}
\newcommand\bh{b_h}
\newcommand\bq{\uu{b}_q}
\newcommand\bu{b_u}
\newcommand\luh{\ell_h^u}
\newcommand\lqh{\ell_h^q}
\newcommand\suh{s_h^u}
\newcommand\A{\mathcal{A}}
\newcommand\bk{\uu{\kappa}}
\newcommand\dpt{\partial_t}
\newcommand\dptt{\partial_{tt}}
\newcommand\Lu{\hyperref[EQN::LIFTING-Uh]{\mathcal{L}_h}}
\newcommand\Bh{\hyperref[EQN::REDUCED-VARIATIONAL-DG]{\mathcal{B}_h}}
\newcommand\Rh{\hyperref[EQN::DEF-Rh]{\mathcal{R}_h}}
\newcommand\Ah{\hyperref[DEF::Ah]{\mathcal{A}_h}}
\newcommand\Nh{\hyperref[EQN::DISCRETE-NEWTON]{\mathcal{N}_h}}
\newcommand\uh{u_h}
\newcommand\wh{w_h}
\newcommand\zh{z_h}
\newcommand\q{{\uu{q}}}
\def\r{{\uu{r}}}
\newcommand\p{{\uu{p}}}
\newcommand{\pK}{p_K}
\newcommand{\lK}{\lambda_K}
\newcommand{\lambdah}{\lambda_h}
\newcommand{\hpK}{\hat{p}_K}
\newcommand{\hhKt}{\hat{h}_{\Kt}}
\newcommand\Uh{U_h}
\newcommand\D{\uu{D}}
\newcommand\Su{S_u}
\newcommand\Qh{\uu{Q}_h}
\newcommand\MT{M_t}
\newcommand\vh{v_h}
\newcommand\qh{\q_h}
\newcommand\rh{\r_h}
\newcommand{\TPi}{\widetilde{\Pi}}
\newcommand{\TPix}{\widetilde{\Pi}^{\bx}}
\newcommand{\PiO}{\boldsymbol{\Pi}_0}
\newcommand\Vp{\mathcal{V}^{\text{\scriptsize{$\p$}}}}
\newcommand\VpK{V^{\text{\scriptsize{$p_K$}}}}
\newcommand\MpK{\boldsymbol{M}^{\text{\scriptsize{$p_K$}}}}
\newcommand\Mp{\boldsymbol{\mathcal{M}}^{\text{\scriptsize{$\p$}} }}
\newcommand{\EFC}[2]{\calC^{#1}\left({#2}\right)}
\newcommand{\jump}[1]{[\![#1]\!]}
\numberwithin{equation}{section}
\newcommand\vnOmega{{{\vn}_{\Omega}^{\bx}}}
\newcommand{\vnSpace}[1]{{{\vn}_{#1}^{\bx}}}
\newcommand{\vnTime}[1]{{n_{#1}^t}}
\newlength{\dhatheight}
\newcommand\UFLUX{{\widehat{u}_h}}
\newcommand\QFLUX{
    \settoheight{\dhatheight}{\ensuremath{\widehat{q_h}}}
    \addtolength{\dhatheight}{-0.40ex}
    \widehat{\vphantom{\rule{1pt}{\dhatheight}}
    \smash{\widehat{\q}}}_h
}
\newcommand{\nF}{\bn_F}
\newcommand{\nFt}{n_F^t}
\newcommand{\nFx}{\bn_{F}^{\bx}}
\newcommand{\Ft}{F_t}
\newcommand{\Fx}{F_{\bx}}
\newcommand{\ts}{\mathtt{s}}
\newcommand{\tsK}{\mathtt{s}_K}
\newcommand{\ellK}{\ell_K}
\newcommand{\sKxF}{s_{K}^{\Fx}}
\newcommand{\mvec}[1]
{{\mathbf{#1}}}
\newcommand{\vn}{{\mvec n}}
\newcommand{\uu}[1]{\hbox{\boldmath$#1$}}
\newcommand{\Uu}[1]{{\mathbf{#1}}}
\newcommand{\bzero}{\Uu{0}}
\newcommand{\calC}{{\mathcal C}}
\newcommand{\calT}{{\mathcal T}}
\newcommand{\calF}{{\mathcal F}}
\newcommand{\Fh}{\calF_h}
\newcommand{\frakE}{\mathfrak{E}}
\newcommand{\frakEx}{\mathfrak{E}_{\bx}}
\newcommand{\calK}{\mathcal{K}}
\newcommand{\calKx}{\mathcal{K}_{\bx}}
\newcommand{\Kx}{K_{\bx}}
\newcommand{\Kt}{K_t}
\newcommand{\hK}{h_{K}}
\newcommand{\hcalKx}{h_{\calKx}}
\newcommand{\hKx}{h_{K_\bx}}
\newcommand{\hKt}{h_{K_t}}
\newcommand{\Tc}{\calT^{\#}}
\newcommand{\Th}{{\calT_h}}
\newcommand{\deK}{{\partial K}}
\newcommand{\IN}{\mathbb{N}}
\newcommand{\IR}{\mathbb{R}}
\newcommand{\gD}{{g_{\mathrm D}}}
\newcommand{\oon}{\;\text{on}\;}
\newcommand{\bx}{{\Uu x}}
\newcommand{\by}{{\Uu y}}
\newcommand{\bn}{{\Uu n}}
\newcommand{\bnOmega}{\Uu{n}_{\Omega}^{\bx}}
\newcommand{\bN}{{\sf{N}}}
\newcommand{\btau}{{\boldsymbol\tau}}		
\newcommand{\IP}{\mathbb{P}}
\newcommand{\IQ}{\mathbb{P}_{\otimes}}
 \newcommand{\mvl}[1]{\{ \!\!\{#1\}\!\!\}}  
 \newcommand{\FT}{{\Fh^T}}
\newcommand{\FO}{{\Fh^0}}
\newcommand{\FD}{{\Fh^{\mathrm D}}}
\newcommand{\rtime}{{\mathrm{time}}}
\newcommand{\rspace}{{\mathrm{space}}}
\newcommand{\CP}{C_{\mathrm{P}}}
\newcommand{\CN}{C_\mathcal{N}}
\newcommand{\CL}{C_\mathcal{L}}
\newcommand{\Cinv}{C_{\mathrm{inv}}}
\newcommand{\Ctr}{C_{\mathrm{tr}}}
\newcommand{\Fspa}{{\Fh^\rspace}}
\newcommand{\Ftime}{{\Fh^\rtime}}
\newcommand{\FKtime}{{\mathcal{F}_K^\rtime}}
\newcommand{\FKspace}{{\mathcal{F}_K^\rspace}}
\DeclareMathOperator{\diam}{diam} 
\DeclareMathOperator{\essinf}{ess\,inf}
\newcommand*{\abs}[1]{|#1|}
\newcommand{\Tnorm}[2]{|\!|\!|#1|\!|\!|_{#2}}
\newcommand*{\SemiNorm}[2]{\left|#1\right|_{#2}}
\newcommand*{\Norm}[2]{\|#1\|_{#2}}
\newcommand{\LDG}{_{\hyperref[EQN::LDG-NORM]{\mathrm{LDG}}}}
\newcommand{\J}{_{\hyperref[EQN::JUMP-FUNCTIONAL]{\mathrm{J}}}}
\newcommand{\LDGN}{_{\hyperref[EQN::DG-NORMS-3]{\mathrm{LDG,}\mathcal{N}}}}
\newcommand{\LDGp}{_{\hyperref[EQN::DG-NORMS-4]{\mathrm{LDG^+}}}}
\newcommand{\LDGs}{_{\hyperref[EQN::DG-NORMS-5]{\mathrm{LDG^{\star}}}}}
\newcommand{\LDGss}{_{\hyperref[EQN::DG-NORMS-6]{\mathrm{LDG^{\diamond}}}}}
\newcommand{\mj}{{\boldsymbol{j}}}
\newcommand{\qT}{{\mathbb{Q\!T}}}
\newcommand{\eT}{{\mathbb{E\!T}}}
\newcommand{\mi}{{\boldsymbol{i}}}
\newcommand{\ej}{{\boldsymbol{e}_j}}
\newcommand{\e}{{\boldsymbol{e}}}
\definecolor{cpcol}{rgb}{0.0, 0.5, 1.0}
\definecolor{pscol}{rgb}{0,0.6,0}
\useunder{\uline}{\ul}{}
\pgfplotsset{
    discard if not/.style 2 args={
        x filter/.append code={
            \edef\tempa{\thisrow{#1}}
            \edef\tempb{#2}
            \ifx\tempa\tempb
            \else
                
            \fi
        }
    }
}
\pgfplotsset{compat=1.16}
\pgfplotsset{tick label style={font=\small},label style={font=\small},legend style={font=\small},}
\pgfplotsset{ width=.49\linewidth}
            \def\pgfplotstable@loc@TMPd{\pgfplotstablegetelem{##1}{#1}\of}
            \edef\tempa{\pgfplotsretval}
            \edef\tempb{#2}
\pgfplotsset{
	discard if/.style 2 args={
		x filter/.append code={
			\edef\tempa{\thisrow{#1}}
			\edef\tempb{#2}
			\ifx\tempa\tempb
			
			\fi
		}
	},}     
\newcommand{\CycleNextGruoupPloth}[7]
{\nextgroupplot[title={#6}, ymode=log,xmode=log, ylabel={#5},xlabel={#7}]
	\foreach \m in {cart,poly,qtrefftz,embt}{
	\addplot+[discard if not={p}{#1},discard if not={method}{\m}] table [x=h, y=#2, col sep=comma] {#3};}
   \foreach \m in {embt}{
	\addplot+[discard if not={p}{#1},discard if not={method}{\m},discard if={#4}{0}, only marks,
	visualization depends on=\thisrow{#4} \as \labela,
	nodes near coords=\pgfmathprintnumber{\labela}
	,
	every node near coord/.append style={
		black,
		draw=yellow!30,
		ellipse,
		fill=yellow!30,
		inner sep=1pt,
		xshift=5.5ex,
		yshift=5.5ex,
		scale=0.7,/pgf/number format/fixed,
        /pgf/number format/precision=2,/pgf/number format/fixed zerofill}
	] table [x=h, y=#2, col sep=comma] {#3};
	\legend{$\IQ$,$\IP$,$\qT$,$\eT$}
}}
\newcommand{\CycleNextGruoupPlotdofs}[3]
{\nextgroupplot[ymode=log, ylabel={#3},xlabel={$\sqrt{\mathrm{N}_{\mathrm{DoFs}}}$}]
\foreach \m in {cart,poly,qtrefftz,embt}{
\addplot+[discard if not={method}{\m}] table [x=totdofs2, y=#1, col sep=comma] {#2};}
\legend{$\IQ$,$\IP$,$\qT$,$\eT$}
}
\newcommand{\CycleNextGruoupPlotcond}[6]
{\nextgroupplot[ymin=10^0,ymax=10^5,title={#6}, ymode=log,xmode=log,ylabel={#5},xlabel={$h$}]
	\foreach \p in {2,3,4}{
	\addplot+[discard if not={p}{\p},discard if not={method}{#1}] table [x=h, y=#2, col sep=comma] {#3};
	\addplot+[discard if not={p}{\p},discard if not={method}{#1},discard if={#4}{0}, only marks,
	visualization depends on=\thisrow{#4} \as \labela,
	nodes near coords=\pgfmathprintnumber{\labela}
	,
	every node near coord/.append style={
		black,
		draw=yellow!30,
		ellipse,
		fill=yellow!30,
		inner sep=1pt,
		xshift=3ex,
		yshift=-1.5ex,
		scale=0.45,/pgf/number format/fixed,
        /pgf/number format/precision=2,/pgf/number format/fixed zerofill}
	] table [x=h, y=#2, col sep=comma] {#3};}
	\legend{$p=2$,,$p=3$,,$p=4$}
}
\title{Inf-sup stable space--time Local Discontinuous Galerkin method for the heat equation}
\author{Sergio G\'omez\thanks{Department of Mathematics and Applications, University of Milano-Bicocca, 20125 Milan, Italy (\href{mailto:sergio.gomezmacias@unimib.it}{sergio.gomezmacias@unimib.it})} \thanks{IMATI-CNR ``Enrico Magenes", Via Ferrata 5, 27100, Pavia, Italy}\ \orcidlink{0000-0001-9156-5135} 
\and 
Chiara Perinati\thanks{Department of Mathematics, University of Pavia, 27100 Pavia, Italy (\href{mailto:chiara.perinati01@universitadipavia.it}{chiara.perinati01@universitadipavia.it})}
\orcidlink{0009-0002-8819-928X}
\and 
Paul Stocker\thanks{Faculty of Mathematics, University of Vienna, 1090 Vienna, Austria (\href{mailto:paul.stocker@univie.ac.at}{paul.stocker@univie.ac.at})} \orcidlink{0000-0001-5073-3366}
}
\date{}
\pgfplotsset{compat=1.14}
\begin{document}
\maketitle

\begin{abstract}
\noindent We propose and analyze a space--time Local Discontinuous Galerkin method for the  approximation of the solution to parabolic problems.
The method allows for very general discrete spaces and prismatic space--time meshes. 
Existence and uniqueness of a discrete solution are shown by means of an inf-sup condition, whose proof
does not rely on polynomial inverse estimates.
Moreover, for piecewise polynomial spaces satisfying an additional mild condition, we show a second inf-sup condition that provides additional control over the time derivative of the discrete solution.
We derive~$hp$-\emph{a priori} error bounds based on these inf-sup conditions, which we use to prove convergence rates for standard, tensor-product, 
and quasi-Trefftz polynomial spaces.
Numerical experiments validate our theoretical results.
\end{abstract}

\paragraph{Keywords.} Space--time finite element method, Local Discontinuous Galerkin method, inf-sup stability, parabolic problem, prismatic space--time meshes.

\paragraph{Mathematics Subject Classification.} 35K05, 65M12, 65M15.

\section{Introduction \label{INTRODUCTION}}
We are interested in the numerical approximation of the solution to a parabolic problem on a space--time cylinder~$\QT = \Omega \times (0, T)$, where~$\Omega$ is an open, bounded polytopal domain in~$\IR^d$ ($d \in \{1,2, 3\}$) with Lipschitz boundary~$\partial \Omega$, and~$T > 0$ is some final time.
Let~$\bk$ be a symmetric positive definite diffusion tensor such that, for some constant~$\theta  > 0$, 
\begin{equation}
\label{EQN::DIFFUSION}
\by^T \bk \by \geq \theta |\by|^2 \qquad \forall \by \in \IR^d,
\end{equation}
and let~$f: \QT \rightarrow \IR$, $u_0 :\Omega \rightarrow \IR$, and $\gD : \partial \Omega \times (0, T) \rightarrow \IR$ be prescribed source term, initial datum, and Dirichlet boundary datum, respectively.
The considered initial and boundary value problem (IBVP) reads: find~$u : \QT \rightarrow \IR$ such that
\begin{subequations}
\label{EQN::MODEL-PROBLEM}
\begin{alignat}{3}
\label{EQN::MODEL-PROBLEM-1}
    \dpt u - \nablax \cdot (\bk \nablax u) & = f & & \qquad \text{ in } \QT, \\
\label{EQN::MODEL-PROBLEM-2}
    u & = \gD & & \qquad \text{ on } \partial \Omega \times (0, T), \\
\label{EQN::MODEL-PROBLEM-3}
    u & = u_0 & & \qquad \text{ on } \Omega \times \{0\}.
\end{alignat}
\end{subequations}

\paragraph{Continuous weak formulation.}
For initial datum~$u_0 \in L^2(\Omega)$, source term~$f \in L^2(0, T; H^{-1}(\Omega))$, and homogeneous Dirichlet boundary conditions~$(\gD = 0)$, we define the following spaces:
\begin{equation*}
Y := L^2(0, T; H_0^1(\Omega)) \quad \text{ and } \quad X:= Y \cap H^1(0, T; H^{-1}(\Omega)),
\end{equation*}
and their associated norms
\begin{equation*}
\Norm{v}{Y}^2 := \int_0^T \Norm{\nablax v(\cdot, t)}{L^2(\Omega)^d}^2 \dt \quad \text{ and } \quad \Norm{v}{X}^2 := \frac12 \Norm{v(\cdot, T)}{L^2(\Omega)}^2 + \theta \Norm{v}{Y}^2 + \Norm{\dpt v}{L^2(0, T; H^{-1}(\Omega))}^2,
\end{equation*}
where
\begin{equation*}
\Norm{\varphi}{L^2(0, T; H^{-1}(\Omega))} := \sup_{0 \neq v \in Y} \frac{\langle \varphi, v\rangle}{\Norm{v}{Y}} \qquad \qquad \forall \varphi \in L^2(0, T; H^{-1}(\Omega)),
\end{equation*} 
with~$\langle \cdot, \cdot \rangle$ denoting the duality between~$L^2(0, T; H^{-1}(\Omega))$ and~$Y$.

The standard Petrov--Galerkin weak formulation of the IBVP~\eqref{EQN::MODEL-PROBLEM} with homogeneous Dirichlet boundary conditions is: find~$u \in X$, such that
\begin{subequations}
\label{EQN::CONTINUOUS-WEAK-FORMULATION}
\begin{alignat}{3}
\label{EQN::CONTINUOUS-WEAK-FORMULATION-1}
b(u, v) & = \langle f, v \rangle & & \qquad \forall v \in Y, \\
\label{EQN::CONTINUOUS-WEAK-FORMULATION-2}
\int_\Omega u(\bx, 0) w \dx & = \int_{\Omega} u_0 w \dx & & \qquad \forall w \in L^2(\Omega),
\end{alignat}
\end{subequations}
where
\begin{equation*}
b(u, v) := \langle \dpt u, v\rangle + a(u, v), \quad \text{ with } a(u, v) := \int_{\QT} \bk \nablax u \cdot \nablax v.
\end{equation*}

According to~\cite[Thm.~4.1 and \S4.7.1 in Ch.~3]{Lions_Magenes_Vol1_1972} and~\cite[Thm.~5.1]{Schwab_Stevenson_2009}, there exists a unique solution to the weak formulation~\eqref{EQN::CONTINUOUS-WEAK-FORMULATION}. 
Moreover, the inclusion~$X \subset C^0([0, T]; L^2(\Omega))$ (see, e.g., \cite[Thm.~3 in \S5.9.2]{Evans_1998}) guarantees that~\eqref{EQN::CONTINUOUS-WEAK-FORMULATION-2} makes sense.

Since the proposed method is nonconforming, additional regularity is required for the source term~$f$. More precisely, we henceforth assume that~$f \in L^2(\QT)$.

\paragraph{Previous works.} 
Space--time finite element methods treat time as an additional space dimension in a time-dependent PDE,
which results in many advantages, such as simultaneous high-order accuracy in space and time, 
the possibility of performing space--time adaptive refinements, 
the natural treatment of problems on moving domains,
and suitability to be combined with parallel-in-time solvers.
Motivated by such advantages, in the literature, several methods have been designed for the discretization of parabolic problems; see, e.g., the recent survey in~\cite{Langer_Steinbach_2019}.
In particular, space--time methods related to the Petrov--Galerkin weak formulation in~\eqref{EQN::CONTINUOUS-WEAK-FORMULATION} include conforming finite element~\cite{Aziz_Monk_1989,Steinbach_2015}, interior-penalty discontinuous Galerkin~\cite{Cangiani_Dong_Georgoulis:2017}, and virtual element~\cite{Gomez_Mascotto_Moiola_Perugia_2024,Gomez_Mascotto_Perugia_2024} methods.
Alternative approaches, such as wavelet~\cite{Schwab_Stevenson_2009} and finite element~\cite{Andreev_2013,Stevenson_Westerdiep_2021}
methods based on a minimal residual Petrov--Galerkin formulation, least-squares methods~\cite{Bochev_Gunzburger_1998,Fuhrer_Karkulik_2021,Gantner_Stevenson_2024}, a discontinuous Petrov--Galerkin method~\cite{Diening_Storn_2022}, an isogeometric method with time-upwind test functions~\cite{Langer_Moore_Stephen_Neumuller_2016}, and a coercive formulation based on a Hilbert transformation of the test functions~\cite{Steinbach_Zank_2020} have also been considered.

We focus on discontinuous Galerkin (DG) methods, which offer great flexibility in the choice of the discrete spaces and meshes that can be employed.
In particular, the Local Discontinuous Galerkin (LDG) method~\cite{Cockburn_Shu:1998} has shown better stability properties than other DG methods (see, e.g., the comparative study of DG methods for elliptic PDEs in~\cite{Castillo_2002}).

We can reduce the number of degrees of freedom (DoFs) while keeping the approximation properties of full polynomial spaces by using special discrete spaces. 
Trefftz spaces are based on solutions to the homogeneous PDE and they have been used in the context of space--time DG methods.
In particular, for 
wave problems in one space dimension~\cite{SpaceTimeTDG,KSTW2014,PFT09},
the acoustic wave equation~\cite{PSSW20,Moiola_Perugia_2018,bgl2016},
elasto-acoustics~\cite{bcds20},
time-dependent Maxwell's equations~\cite{EKSW15},
and the linear Schr\"odinger equation~\cite{Gomez_Moiola:2024,Gomez_Moiola_Perugia_Stocker:2023}. 
The uneven degree of the heat operator in different dimensions presents an interesting challenge in the construction of Trefftz spaces.
Quasi-Trefftz spaces consists of functions that are only approximated solutions to the considered PDE. A general approach to construct suitable quasi-Trefftz spaces for linear operators is presented in~\cite{IG_Moiola_Perinati_Stocker:2024}.
A method that avoids the explicit construction of Trefftz spaces is the embedded Trefftz method, see~\cite{LS_IJMNE_2023,Lozinski_2019}.
Numerical results of the embedded Trefftz method for the heat equation, and possible choices for the Trefftz-like space, have been discussed in~\cite{ma_ch}.
A recent unifying framework for Trefftz-like methods in \cite{LLSV_ARXIV_2024} provides an error analysis for the embedded Trefftz discontinuous Galerkin method applied to a
range of scalar elliptic PDEs. 
An in-depth comparison on the parameters for the efficiency of different finite element methods on polytopal meshes can be found in \cite{LSZ_PAMM_2024}, where the advantages of Trefftz methods were further evidenced.

\paragraph{Main contributions.} 
In this work, we present and analyze a space--time LDG method for the discretization of parabolic problems.
Below, we list the main theoretical and computational results in this work.
\begin{itemize}
\item 
We establish the well-posedness of the method for very general prismatic space--time meshes and discrete spaces.

\item We show two different inf-sup conditions, which require only that the stability parameter be strictly positive. The first one is valid for any choice of the discrete spaces satisfying a local compatibility condition and does not rely on polynomial inverse estimates. The second one holds for polynomial spaces that also satisfy  a local inclusion condition and provides additional control on the first-order time derivative of the discrete solution; see~Section~\ref{SECT::WELL-POSEDNESS} for more details.

\item We derive~$hp$-\emph{a priori} error bounds in some energy norms.
Moreover, we prove $hp$-error estimates for standard and tensor-product polynomial spaces, and~$h$-error estimates for 
quasi-Trefftz spaces. The latter space allows for a significant reduction of the number of degrees of freedom; see Section~\ref{SECT::QUASI-TREFFTZ}.

\item Optimal convergence rates of order~$\mathcal{O}(h^{p+1})$ (where~$h$ denotes the maximum diameter of the space--time prismatic elements) are numerically observed for the error in the~$L^2(\QT)$ norm when standard piecewise polynomials of 
uniform degree~$p$ are used. 
\end{itemize}

Our stability and convergence results are analogous to those obtained in~\cite{Cangiani_Dong_Georgoulis:2017} for the space--time interior-penalty DG method with standard polynomial spaces. However, in our analysis, we allow for prismatic meshes with hanging~\emph{time-like} facets (see Figure~\ref{fig:hanging-time-like-facets} and the notation introduced in Section~\ref{SECT::MESH-NOTATION} below), which naturally arise when using local time steps.
In particular, we show that stability and \emph{a priori} error estimates can be established for elements with an arbitrary number of both \emph{space-like} and~\emph{time-like} facets. Moreover, as in the elliptic case, the proposed LDG method avoids the typical requirement of \emph{primal} DG methods (such as the interior-penalty DG method) of a ``sufficiently large" stability parameter, which can have a negative impact on the conditioning of the associated matrix (see, e.g., \cite[\S4.2]{Castillo_2002}).

\begin{figure}[!ht]
    \centering
		\resizebox{6.5cm}{!}{
			\begin{tikzpicture}
				[rotate around z=-18,rotate around y=-30,rotate around x=20,grid/.style={very thin,gray},axis/.style={->,thick}]
				
				\draw[axis] (6,0,0) -- (7,0,0) node[anchor=west]{\Large $x_1$};
				\draw[axis] (0,3,0) -- (0,4,0) node[anchor=east,xshift=-2pt]{\Large $t$};
				\draw[axis] (0,0,6) -- (0,0,7) node[anchor=east]{\Large $x_2$};
				\draw[dashed,lightgray]  (0,0,0) -- (6,0,0);			
                \draw[dashed,lightgray]  (0,0,0) -- (0,3,0);			
                \draw[dashed,lightgray]  (0,0,0) -- (0,0,6);			
				
				\draw[dashed,lightgray]  (1,0,0) -- (1,3,0);				
				\draw[dashed,lightgray]  (2.4,0,0) -- (2.4,3,0);	
				\draw[dashed,lightgray]  (4.1,0,0) -- (4.1,3,0);
				\draw[dashed,lightgray]  (5.2,0,0) -- (5.2,3,0);						
				\draw[dashed,lightgray]  (0,0,4.8) -- (0,3,4.8);		
				\draw[dashed,lightgray]  (0,0,3.3) -- (0,3,3.3);		
				\draw[dashed,lightgray]  (0,0,1.7) -- (0,3,1.7);		
				\draw[dashed,lightgray]  (1.1,0,1.3) -- (1.1,3,1.3);		
				\draw[dashed,lightgray]  (1.5,0,1.8) -- (1.5,3,1.8);		
				\draw[dashed,lightgray]  (2.4,0,1.8) -- (2.4,3,1.8);		
				\draw[dashed,lightgray]  (2.7,0,1.1) -- (2.7,3,1.1);		
				\draw[dashed,lightgray]  (1.2,0,2.9) -- (1.2,3,2.9);		
				\draw[dashed,lightgray]  (1.6,0,3.4) -- (1.6,3,3.4);		
				\draw[dashed,lightgray]  (1.6,0,4.5) -- (1.6,3,4.5);		
				\draw[dashed,lightgray]  (1.2,0,4.8) -- (1.2,3,4.8);		
				\draw[dashed,lightgray]  (2.8,0,5.2) -- (2.8,3,5.2);		
				\draw[dashed,lightgray]  (3.3,0,4.5) -- (3.3,3,4.5);		
				\draw[dashed,lightgray]  (2.8,0,3.4) -- (2.8,3,3.4);		
				\draw[dashed,lightgray]  (3.1,0,2.6) -- (3.1,3,2.6);		
				\draw[dashed,lightgray]  (5,0,1.5) -- (5,3,1.5);		
				\draw[dashed,lightgray]  (4.4,0,1.8) -- (4.4,3,1.8);		
				\draw[dashed,lightgray]  (3.9,0,1.1) -- (3.9,3,1.1);		
				\draw[dashed,lightgray]  (4.2,0,2.6) -- (4.2,3,2.6);		
				\draw[dashed,lightgray]  (4.5,0,3.4) -- (4.5,3,3.4);		
				\draw[dashed,lightgray]  (5.2,0,3.3) -- (5.2,3,3.3);	
				\draw[dashed,lightgray]  (4.2,0,4.5) -- (4.2,3,4.5);		
				\draw[dashed,lightgray]  (4.7,0,5) -- (4.7,3,5);	
				\draw[dashed,lightgray] (0,0,1.7) -- (1.1,0,1.3);
				\draw[dashed,lightgray]  (1.5,0,1.8) -- (1.1,0,1.3);
				\draw[dashed,lightgray]  (1.2,0,2.9) -- (1.5,0,1.8);
				\draw[dashed,lightgray]  (1.2,0,2.9) -- (0,0,3.3);
				\draw[dashed,lightgray] (1,0,0) -- (1.1,0,1.3);
				\draw[dashed,lightgray]  (1.6,0,4.5) -- (1.6,0,3.4);
				\draw[dashed,lightgray]  (1.2,0,2.9) -- (1.6,0,3.4);
				\draw[dashed,lightgray]  (2.8,0,3.4) -- (3.3,0,4.5);
				\draw[dashed,lightgray]  (2.8,0,3.4) -- (1.6,0,3.4);
				\draw[dashed,lightgray]  (2.8,0,3.4) -- (3.1,0,2.6);
				\draw[dashed,lightgray] (4.4,0,1.8) -- (3.9,0,1.1);
				\draw[dashed,lightgray]  (4.1,0,0) -- (3.9,0,1.1);
				\draw[dashed,lightgray]  (2.7,0,1.1) -- (3.9,0,1.1);
				\draw[dashed,lightgray] (2.7,0,1.1) -- (2.4,0,0);
				\draw[dashed,lightgray]  (2.7,0,1.1) -- (2.4,0,1.8);
				\draw[dashed,lightgray]  (2.7,0,1.1) -- (2.4,0,1.8);
				\draw[dashed,lightgray]  (1.5,0,1.8) -- (2.4,0,1.8);
				\draw[dashed,lightgray] (2.4,0,1.8) -- (3.1,0,2.6);
				\draw[dashed,lightgray]  (4.2,0,2.6) -- (3.1,0,2.6);			
				\draw[dashed,lightgray]  (4.2,0,4.5) -- (3.3,0,4.5);			
				
				\draw[dashed,lightgray] (6,0,1.9)--(6,0,0)--(5.2,0,0)--(5,0,1.5)--(6,0,1.9);
				\draw[dashed,lightgray]	(5.2,0,3.3)--(4.5,0,3.4)--(4.2,0,2.6)--(4.4,0,1.8)--(5,0,1.5)--(6,0,1.9)--(6,0,3.3)--(5.2,0,3.3);
				\draw[dashed,lightgray] (6,1.5,3.3)--(5.2,1.5,3.3)--(4.5,1.5,3.4)--(4.2,1.5,2.6)--(4.4,1.5,1.8)--(5,1.5,1.5)--(6,1.5,1.9);
				\draw[dashed,lightgray] (6,0,3.3)--(5.2,0,3.3)--(4.5,0,3.4)--(4.2,0,4.5)--(4.7,0,5)--(6,0,4.9);
				\draw[dashed,lightgray] (6,1.5,3.3)--(5.2,1.5,3.3)--(4.5,1.5,3.4)--(4.2,1.5,4.5)--(4.7,1.5,5)--(6,1.5,4.9);
				\draw[dashed,lightgray] (6,0.75,3.3)--(5.2,0.75,3.3)--(4.5,0.75,3.4)--(4.2,0.75,4.5)--(4.7,0.75,5)--(6,0.75,4.9);
				\draw[dashed,lightgray] (6,2.25,3.3)--(5.2,2.25,3.3)--(4.5,2.25,3.4)--(4.2,2.25,4.5)--(4.7,2.25,5)--(6,2.25,4.9);		
				\draw[dashed,lightgray] (4.7,0.375,6)--(4.7,0.375,5)--(6,0.375,4.9);
				\draw[dashed,lightgray] (4.7,0.75,6)--(4.7,0.75,5)--(6,0.75,4.9);
				\draw[dashed,lightgray] (4.7,1.125,6)--(4.7,1.125,5)--(6,1.125,4.9);
				\draw[dashed,lightgray] (4.7,1.5,6)--(4.7,1.5,5)--(6,1.5,4.9);
				\draw[dashed,lightgray] (4.7,1.875,6)--(4.7,1.875,5)--(6,1.875,4.9);
				\draw[dashed,lightgray] (4.7,2.25,6)--(4.7,2.25,5)--(6,2.25,4.9);
				\draw[dashed,lightgray] (4.7,2.625,6)--(4.7,2.625,5)--(6,2.625,4.9);
				\draw[dashed,lightgray] (4.7,0,6)--(4.7,0,5)--(6,0,4.9);
				\draw[dashed,lightgray] (4.7,0,6)--(4.7,0,5)--(4.2,0,4.5)--(3.3,0,4.5)--(2.8,0,5.2)--(2.8,0,6);
				\draw[dashed,lightgray] (4.7,2.25,6)--(4.7,2.25,5)--(4.2,2.25,4.5)--(3.3,2.25,4.5)--(2.8,2.25,5.2)--(2.8,2.25,6);
				\draw[dashed,lightgray] (4.7,1.5,6)--(4.7,1.5,5)--(4.2,1.5,4.5)--(3.3,1.5,4.5)--(2.8,1.5,5.2)--(2.8,1.5,6);
				\draw[dashed,lightgray] (4.7,0.75,6)--(4.7,0.75,5)--(4.2,0.75,4.5)--(3.3,0.75,4.5)--(2.8,0.75,5.2)--(2.8,0.75,6);
				\draw[dashed,lightgray]  (2.8,3,6)--(2.8,3,5.2)--(1.6,3,4.5)--(1.2,3,4.8)--(1.5,3,6);
				\draw[dashed,lightgray]  (2.8,1.5,6)--(2.8,1.5,5.2)--(1.6,1.5,4.5)--(1.2,1.5,4.8)--(1.5,1.5,6);
				\draw[dashed,lightgray] (0,0,6)--(1.5,0,6)--(1.2,0,4.8)--(0,0,4.8);
				
				\draw[fill=blue,opacity=.1] (6,0,1.9)--(6,0,0)--(5.2,0,0)--(5,0,1.5);
				\draw[fill=blue,opacity=.1] (6,3,1.9)--(6,3,0)--(5.2,3,0)--(5,3,1.5);
				\draw[fill=blue,opacity=.1] (6,3,1.9)--(6,0,1.9)--(5,0,1.5)--(5,3,1.5);
				\draw[fill=blue,opacity=.1] (5.2,3,0)--(5.2,0,0)--(5,0,1.5)--(5,3,1.5);
				\draw[fill=blue,opacity=.1] (5.2,0,0)--(5.2,3,0)--(6,3,0)--(6,0,0);
				\draw[fill=blue,opacity=.1] (6,3,1.9)--(6,3,0)--(6,0,0)--(6,0,1.9);
				
				\draw[fill=blue,opacity=.1] (0,0,6)--(1.5,0,6)--(1.2,0,4.8)--(0,0,4.8);
				\draw[fill=blue,opacity=.1] (0,3,6)--(1.5,3,6)--(1.2,3,4.8)--(0,3,4.8);
				\draw[fill=blue,opacity=.1] (0,3,6)--(1.5,3,6)--(1.5,0,6)--(0,0,6);
				\draw[fill=blue,opacity=.1] (1.2,3,4.8)--(1.5,3,6)--(1.5,0,6)--(1.2,0,4.8);
				\draw[fill=blue,opacity=.1] (1.2,3,4.8)--(0,3,4.8)--(0,0,4.8)--(1.2,0,4.8);
				\draw[fill=blue,opacity=.1] (0,3,6)--(0,3,4.8)--(0,0,4.8)--(0,0,6);
				%
				%
				\draw[fill=yellow,opacity=.1]	(5.2,0,3.3)--(4.5,0,3.4)--(4.2,0,2.6)--(4.4,0,1.8)--(5,0,1.5)--(6,0,1.9)--(6,0,3.3);
				\draw[fill=yellow,opacity=.1]	(5.2,3,3.3)--(4.5,3,3.4)--(4.2,3,2.6)--(4.4,3,1.8)--(5,3,1.5)--(6,3,1.9)--(6,3,3.3);
				\draw[fill=yellow,opacity=.1] (6,0,3.3)--(6,3,3.3)--(6,3,1.9)--(6,0,1.9);
				\draw[fill=yellow,opacity=.1] (6,0,3.3)--(6,3,3.3)--(5.2,3,3.3)--(5.2,0,3.3);
				\draw[fill=yellow,opacity=.1] (4.5,0,3.4)--(4.5,3,3.4)--(5.2,3,3.3)--(5.2,0,3.3);
				\draw[fill=yellow,opacity=.1] (4.5,0,3.4)--(4.5,3,3.4)--(4.2,3,2.6)--(4.2,0,2.6);
				\draw[fill=yellow,opacity=.1] (4.4,0,1.8)--(4.4,3,1.8)--(4.2,3,2.6)--(4.2,0,2.6);
				\draw[fill=yellow,opacity=.1] (5,0,1.5)--(5,3,1.5)--(4.4,3,1.8)--(4.4,0,1.8);
				\draw[fill=yellow,opacity=.1] (5,0,1.5)--(5,3,1.5)--(6,3,1.9)--(6,0,1.9);
				\draw[thick] (6,1.5,1.9)--(6,1.5,3.3);
				%
				\draw[fill=yellow,opacity=.1] (1.5,0,6)--(2.8,0,6)--(2.8,0,5.2)--(1.6,0,4.5)--(1.2,0,4.8)--(1.5,0,6);
				\draw[fill=yellow,opacity=.1] (1.5,3,6)--(2.8,3,6)--(2.8,3,5.2)--(1.6,3,4.5)--(1.2,3,4.8)--(1.5,3,6);
				\draw[fill=yellow,opacity=.1] (1.5,3,6)--(2.8,3,6)--(2.8,0,6)--(1.5,0,6);
				\draw[fill=yellow,opacity=.1] (2.8,3,5.2)--(2.8,3,6)--(2.8,0,6)--(2.8,0,5.2);
				\draw[fill=yellow,opacity=.1] (2.8,3,5.2)--(1.6,3,4.5)--(1.6,0,4.5)--(2.8,0,5.2);
				\draw[fill=yellow,opacity=.1] (1.2,3,4.8)--(1.6,3,4.5)--(1.6,0,4.5)--(1.2,0,4.8);
				\draw[fill=yellow,opacity=.1] (1.2,3,4.8)--(1.5,3,6)--(1.5,0,6)--(1.2,0,4.8);
				\draw[thick] (1.5,1.5,6)--(2.8,1.5,6);		
				
				\draw[fill=red,opacity=.1] (6,0,4.9)--(6,0,3.3)--(5.2,0,3.3)--(4.5,0,3.4)--(4.2,0,4.5)--(4.7,0,5);
				\draw[fill=red,opacity=.1] (6,3,4.9)--(6,3,3.3)--(5.2,3,3.3)--(4.5,3,3.4)--(4.2,3,4.5)--(4.7,3,5);
				\draw[fill=red,opacity=.1] (6,0,3.3)--(6,3,3.3)--(5.2,3,3.3)--(5.2,0,3.3);
				\draw[fill=red,opacity=.1] (4.5,0,3.4)--(4.5,3,3.4)--(5.2,3,3.3)--(5.2,0,3.3);
				\draw[fill=red,opacity=.1] (6,3,3.3)--(6,3,4.9)--(6,0,4.9)--(6,0,3.3);
				\draw[fill=red,opacity=.1]
				(4.5,0,3.4)--(4.2,0,4.5)--(4.2,3,4.5)--(4.5,3,3.4);
				\draw[fill=red,opacity=.1]
				(4.7,0,5)--(4.2,0,4.5)--(4.2,3,4.5)--(4.7,3,5);
				\draw[fill=red,opacity=.1]
				(4.7,0,5)--(6,0,4.9)--(6,3,4.9)--(4.7,3,5);
				\draw[thick] (6,0.75,4.9)--(6,0.75,3.3);
				\draw[thick] (6,1.5,4.9)--(6,1.5,3.3);
				\draw[thick] (6,2.25,4.9)--(6,2.25,3.3);
				
				\draw[fill=red,opacity=.1] (4.7,0,6)--(4.7,0,5)--(4.2,0,4.5)--(3.3,0,4.5)--(2.8,0,5.2)--(2.8,0,6);
				\draw[fill=red,opacity=.1] (4.7,3,6)--(4.7,3,5)--(4.2,3,4.5)--(3.3,3,4.5)--(2.8,3,5.2)--(2.8,3,6);
				\draw[fill=green,opacity=.1]
				(4.7,0,6)--(4.7,0,5)--(4.7,3,5)--(4.7,3,6);
				\draw[fill=red,opacity=.1] (4.7,0,5)--(4.2,0,4.5)--(4.2,3,4.5)--(4.7,3,5);
				\draw[fill=red,opacity=.1] (3.3,0,4.5)--(4.2,0,4.5)--(4.2,3,4.5)--(3.3,3,4.5);
				\draw[fill=red,opacity=.1] (3.3,0,4.5)--(2.8,0,5.2)--(2.8,3,5.2)--(3.3,3,4.5);
				\draw[fill=red,opacity=.1] (2.8,0,6)--(2.8,0,5.2)--(2.8,3,5.2)--(2.8,3,6);
				\draw[fill=red,opacity=.1] (2.8,0,6)--(4.7,0,6)--(4.7,3,6)--(2.8,3,6);
				\draw[thick] (4.7,1.5,6)--(2.8,1.5,6);
				\draw[thick] (4.7,0.75,6)--(2.8,0.75,6);
				\draw[thick] (4.7,2.25,6)--(2.8,2.25,6);
				%
				\draw[fill=green,opacity=.1] (6,0,6)--(4.7,0,6)--(4.7,0,5)--(6,0,4.9);
				\draw[fill=green,opacity=.1] (6,3,6)--(4.7,3,6)--(4.7,3,5)--(6,3,4.9);
				\draw[fill=green,opacity=.1]
				(4.7,0,5)--(6,0,4.9)--(6,3,4.9)--(4.7,3,5);
				\draw[fill=green,opacity=.1] (6,0,6)--(6,3,6)--(4.7,3,6)--(4.7,0,6);
				\draw[fill=green,opacity=.1] (6,0,6)--(6,3,6)--(6,3,4.9)--(6,0,4.9);
				\draw[fill=green,opacity=.1]
				(4.7,0,6)--(4.7,0,5)--(4.7,3,5)--(4.7,3,6);
				
				\draw[thick] (6,2.625,4.9)--(6,2.625,6)--(4.7,2.625,6);		
				\draw[thick] (6,2.25,4.9)--(6,2.25,6)--(4.7,2.25,6);
				\draw[thick] (6,1.875,4.9)--(6,1.875,6)--(4.7,1.875,6);
				\draw[thick] (6,1.5,4.9)--(6,1.5,6)--(4.7,1.5,6);
				\draw[thick] (6,1.125,4.9)--(6,1.125,6)--(4.7,1.125,6);
				\draw[thick] (6,0.75,4.9)--(6,0.75,6)--(4.7,0.75,6);
				\draw[thick] (6,0.375,4.9)--(6,0.375,6)--(4.7,0.375,6);
				
				\foreach \c in {thick}{
					\draw[\c] (0,3,1.7) -- (1.1,3,1.3);
					\draw[\c]  (1.5,3,1.8) -- (1.1,3,1.3);
					\draw[\c]  (1.2,3,2.9) -- (1.5,3,1.8);
					\draw[\c]  (1.2,3,2.9) -- (0,3,3.3);
					\draw[\c] (1,3,0) -- (1.1,3,1.3);
					\draw[\c]  (0,3,4.8) -- (1.2,3,4.8);
					\draw[\c]  (1.5,3,6) -- (1.2,3,4.8);
					\draw[\c]  (1.6,3,4.5) -- (1.6,3,3.4);
					\draw[\c] (1.6,3,4.5) -- (1.2,3,4.8);
					\draw[\c]  (1.2,3,2.9) -- (1.6,3,3.4);
					\draw[\c] (2.8,3,5.2) -- (1.6,3,4.5);
					\draw[\c]  (2.8,3,5.2) -- (2.8,3,6);
					\draw[\c]  (2.8,3,5.2) -- (3.3,3,4.5);
					\draw[\c]  (2.8,3,3.4) -- (3.3,3,4.5);
					\draw[\c]  (2.8,3,3.4) -- (1.6,3,3.4);
					\draw[\c]  (2.8,3,3.4) -- (3.1,3,2.6);			
					\draw[\c]  (4.7,3,5) -- (4.7,3,6);
					\draw[\c]  (4.7,3,5) -- (6,3,4.9);
					\draw[\c]   (4.7,3,5) -- (4.2,3,4.5);
					\draw[\c]  (4.5,3,3.4) -- (4.2,3,4.5);
					\draw[\c]   (5.2,3,0) -- (5,3,1.5);
					\draw[\c] (6,3,1.9) -- (5,3,1.5);
					\draw[\c]  (4.4,3,1.8) -- (5,3,1.5);
					\draw[\c]  (4.4,3,1.8) -- (3.9,3,1.1);
					\draw[\c] (4.1,3,0) -- (3.9,3,1.1);
					\draw[\c]  (2.7,3,1.1) -- (3.9,3,1.1);
					\draw[\c] (2.7,3,1.1) -- (2.4,3,0);
					\draw[\c]  (2.7,3,1.1) -- (2.4,3,1.8);
					\draw[\c]  (2.7,3,1.1) -- (2.4,3,1.8);
					\draw[\c]  (1.5,3,1.8) -- (2.4,3,1.8);
					\draw[\c] (2.4,3,1.8) -- (3.1,3,2.6);
					\draw[\c] (4.2,3,2.6) -- (3.1,3,2.6);
					\draw[\c] (4.2,3,2.6) -- (4.4,3,1.8);
					\draw[\c]  (4.2,3,2.6) -- (4.5,3,3.4);
					\draw[\c]  (5.2,3,3.3) -- (4.5,3,3.4);		
					\draw[\c]  (5.2,3,3.3) -- (6,3,3.3);				
					\draw[\c]  (4.2,3,4.5) -- (3.3,3,4.5);
					\draw[\c]  (6,3,0) -- (6,3,6);
					\draw[\c]  (0,3,6) -- (6,3,6);	}
				
				\foreach \c in {thick}{
					\draw[\c]  (6,0,1.9) -- (6,3,1.9);		
					\draw[\c]  (6,0,3.3) -- (6,3,3.3);		
					\draw[\c]  (6,0,4.9) -- (6,3,4.9);		
					\draw[\c]  (4.7,0,6) -- (4.7,3,6);		
					\draw[\c]  (2.8,0,6) -- (2.8,3,6);	
					\draw[\c]  (1.5,0,6) -- (1.5,3,6);		
					\draw[\c]  (0,0,6) -- (0,3,6);		
					\draw[\c]  (6,0,0) -- (6,3,0);		
					\draw[\c]  (0,0,6) -- (0,3,6);		
					\draw[\c]  (6,3,6) -- (0,3,6);		
					\draw[\c]  (6,3,6) -- (6,3,0);		
					\draw[\c]  (6,3,6) -- (6,0,6); 
					\draw[\c]  (0,3,0) -- (0,3,6); 
					\draw[\c]  (0,3,0) -- (6,3,0); 
					\draw[\c]  (6,0,6) -- (0,0,6); 
					\draw[\c]  (6,0,6) -- (6,0,0); 
				}
			\end{tikzpicture}
		}
            \vspace{0.5cm}
    \caption{
    Example of a $(2+1)$-dimensional prismatic space--time mesh with polygonal \emph{space-like} facets and hanging \emph{time-like} facets.}
    \label{fig:hanging-time-like-facets}
\end{figure}

Finally, we highlight that the LDG method for polytopal meshes has previously been analyzed only in~\cite{Ye_Zhang_Zhu:2022} (for elliptic problems). Furthermore, some results from the present analysis can be adapted to study the~$hp$ version of the LDG method for elliptic problems on a broader class of polytopal meshes.

\paragraph{Notation.}
We denote by~$\dpt$ and~$\dptt$ the first- and second-order time derivatives, respectively, and by~$\nablax$ and~$\Deltax$ the spatial gradient and Laplacian operators. 

Standard notation for~$L^p$, Sobolev, and Bochner spaces is used. For a given bounded, Lipschitz domain~$\Upsilon \subset \IR^d$~$(d = 1, 2, 3)$ and a real number~$s$, we denote by~$H^s(\Upsilon)$ the Sobolev space of order~$s$, endowed with the inner product~$(\cdot,\cdot)_{H^s(\Upsilon)}$, the seminorm~$|\cdot|_{H^s(\Upsilon)}$, and the norm~$\|\cdot \|_{H^s(\Upsilon)}$. In
particular, $H^0(\Upsilon) := L^2(\Upsilon)$ is the space of Lebesgue square integrable
functions over~$\Upsilon$, 
and~$H_0^1(\Upsilon)$ is the closure of~$C_0^{\infty}(\Upsilon)$ in the~$H^1(\Upsilon)$ norm.
Given~$n \in \IN$, a time interval~$(a, b)$, 
and a Banach space~$(X, \Norm{\cdot}{X})$,
the corresponding Bochner space is denoted by~$H^n(a, b; X)$.

We use the following notation for the algebraic tensor product of two spaces, say~$V$ and~$W$:
\begin{equation*}
V \otimes W := \mathrm{span} \{vw \, :\, v \in V \text{ and } w \in W\}.
\end{equation*}
Moreover, given~$p \in \IN$ and an open, bounded set~$\mathcal{D}$, we denote by~$\Pp{p}{\mathcal{D}}$ the space of polynomials of total degree at most~$p$ defined on~$\mathcal{D}$.

\paragraph{Structure of the paper.} In Section~\ref{SECT::SPACE-TIME-DG}, we introduce the space--time notation used and describe the proposed method.
Section~\ref{SECT::WELL-POSEDNESS} is devoted to prove the inf-sup stability of the method.
In Section~\ref{SECT::CONVERGENCE-ANALYSIS}, we derive~\emph{a priori} error bounds in some 
discrete energy norms, which are used to derive~\emph{a priori} error estimates for different discrete polynomial spaces in Section~\ref{SECT::DISCRETE-SPACES}. 
In Section~\ref{SECT::NUMERICAL-EXPERIMETNS}, we present several numerical experiments to validate our theoretical results and assess some additional aspects of the proposed method. Some concluding remarks are presented in Section~\ref{SECT::CONCLUSIONS}.

\section{Definition of the method\label{SECT::SPACE-TIME-DG}}
In this section, we present the proposed space--time LDG method for the discretization of model~\eqref{EQN::MODEL-PROBLEM}.
In Section~\ref{SECT::MESH-NOTATION}, we introduce the notation used for prismatic space--time meshes, and for DG weighted averages and normal jumps.
A mixed space--time LDG formulation for model~\eqref{EQN::MODEL-PROBLEM} is introduced in~Section~\ref{SECT::LDG-METHOD} for variable degrees of accuracy and generic discrete spaces.
Such a mixed formulation is then reduced to one involving only the primary discrete unknown in Section~\ref{SECT::REDUCED-FORMULATION}.

\subsection{Space--time mesh and DG notation \label{SECT::MESH-NOTATION}}
Let~$\Th$ be a nonoverlapping prismatic partition of the space--time domain~$\QT$, i.e., any element~$K \in \Th$ can be written as~$K = \Kx \times \Kt$, for some~$d$-dimensional polytope~$\Kx \subset \Omega$ and some time interval~$\Kt \subset (0, T)$.
We use the notation~$\hKx = \diam(\Kx)$, $\hKt = \abs{\Kt}$, and $\hK = \diam(K)$.
We call ``mesh facet'' any intersection~$F \subset \deK_1\cap \deK_2$, for
$K_1,K_2\in\Th$, $F \subset (\partial \Kx \times \Kt) \cap\partial \QT$, or~$F \subset (\Kx \times \partial \Kt) \cap \partial\QT$ that has positive $d$-dimensional measure and is contained in a $d$-dimensional hyperplane.
For each mesh facet~$F$, 
we set~$\nF = (\nFx,
\nFt) \in \IR^{d+1}$ as one of the two unit normal vectors orthogonal to~$F$ (for prismatic elements, either~$\nFt = 0$ or $\nFt = 1$). We can then classify each mesh facet~$F$ as a (see, e.g., \cite[\S3]{Moiola_Perugia_2018})
\begin{equation*}
\begin{cases}
\text{\emph{space-like} facet} & \text{if }
\nFt = 1; \\
\text{\emph{time-like} facet} & \text{if } \nFt = 0.
\end{cases}
\end{equation*}
In Figure~\ref{fig:space-time-like-facets}, we illustrate these definitions, whose terminology is adopted from the relativistic geometry of space--time (see, e.g., \cite[Ch.~17]{Feynman:1964}).

\begin{figure}[!ht]
    \centering
		\resizebox{6cm}{!}{
			\begin{tikzpicture}
				[rotate around z=0,rotate around y=-270,rotate around x=0,grid/.style={very thin,gray},axis/.style={->,thick}]
                
				\draw[dashed] (0,0,0) -- (11.8,0,0);
                \draw[axis] (11.8,0,0) -- (13,0,0) node[anchor=west,yshift=-9pt]{\large $x_1$};
				\draw[axis] (0,0,0) -- (0,5,0) node[anchor=east,xshift=-2pt]{ \large$t$};
				\draw[axis] (0,0,0) -- (0,0,6) node[anchor=north,yshift=-2pt]{\large $x_2$};
				\draw[fill=gray!50,opacity=0.5]  (0,3,0) -- (0,3,3) -- (1.8,3,5) -- (4.5,3,2.5) -- (4.2,3,0) -- (0,3,0) ;
                \draw[fill=cyan!60,opacity=0.2] (1.8,3,5) -- (1.8,0,5) -- (4.5,0,2.5) -- (4.5,3,2.5) ;
                \draw[axis] (2.1,3,2.1)--(2.1,4.5,2.1) node[anchor=west, yshift=7pt]{\Large $\bn_{F^{\rspace}} =  (\mathbf{0}, 1) 
                $};
                \draw[axis] (3.15,1.5,3.75)--(4.1694,1.5,4.8504) node[anchor=west]{\Large $\bn_{F^{\rtime}} = (\bn^\bx_{F^\mathrm{time}}, 0)$};
                \draw  (3.15,1,3.2)  node[anchor=west]{\Large $F^\mathrm{time}$};
                \draw (2.1,2.6,1.2) node[anchor=west]{\Large $F^\mathrm{space}$};
				\draw (2.1,0,1.3) node[anchor=west]{\Large $K$};

				\foreach \c in {thick}{
					\draw[\c] (0,0,0) -- (0,0,3);
                    \draw[\c] (0,0,3) -- (1.8,0,5);
                    \draw[dashed] (1.8,0,5) -- (4.5,0,2.5) ;
                    \draw[dashed] (4.5,0,2.5) -- (4.2,0,0) ;
                     \draw[dashed] (4.2,0,0) -- (0,0,0) ;
                     \draw[\c] (0,3,0) -- (0,3,3);
                    \draw[\c] (0,3,3) -- (1.8,3,5);
                    \draw[\c] (1.8,3,5) -- (4.5,3,2.5) ;
                    \draw[\c] (4.5,3,2.5) -- (4.2,3,0) ;
                    \draw[\c] (4.2,3,0) -- (0,3,0) ;
                     \draw[\c] (0,3,0) -- (0,0,0) ;
                    \draw[\c] (0,3,3) -- (0,0,3);
                    \draw[\c] (1.8,3,5) -- (1.8,0,5) ;
                    \draw[dashed] (4.5,3,2.5) -- (4.5,0,2.5) ;
                     \draw[dashed] (4.2,3,0) -- (4.2,0,0);
                    }
			\end{tikzpicture}
		}
        \vspace{0.5cm}
\caption{Example of a $(2+1)$-dimensional prismatic element.
A space-like facet ($F^{\rspace}$) and a time-like facet~($F^{\rtime}$) are highlighted in gray and blue, respectively.
}
    \label{fig:space-time-like-facets}
\end{figure}

For each~$K \in \Th$, we denote by~$\FKspace$ and~$\FKtime$ the sets of space-like and time-like facets of~$K$, respectively. We define the mesh skeleton and its
parts as follows:
\begin{alignat*}{3}
\Fh & := \bigcup_{K \in \Th} \big\{ F : F \in \FKspace \cup \FKtime \big\}, & \ 
\FO & := \bigcup_{K \in \Th} \big\{ F : F \in \FKspace \text{ and } F \subset \Omega \times \{0\} \big\}, \\
\FT & := \bigcup_{K \in \Th} \big\{ F : F \in \FKspace \text{ and } F \subset \Omega \times \{T\} \big\}, & \
\Fspa & := \bigcup_{K \in \Th} \big\{ F :  F \in \FKspace \text{ and }  F \not\in \FT \cup \FO \big\}, \\
\FD & := \bigcup_{K \in \Th} \big\{ F :  F \in \FKtime \text{ and } F \subset \partial \Omega \times (0, T) \big\}, 
& \
\Ftime & := \bigcup_{K \in \Th} \big\{ F :  F \in \FKtime \text{ and } F \not\in \FD \big\}.
\end{alignat*}
Henceforth, we use the following shorthand notation for~$\star \in \{0,\, T,\,  \rspace\}$ and~$\diamond \in \{\mathrm{D}, \rtime\}$:
\begin{equation*}
\int_{\Fh^{\star}} \varphi \dx = \sum_{F \in \Fh^{\star}} \int_F \varphi \dx \quad \text{and} \quad \int_{\Fh^{\diamond}} \varphi \dS = \sum_{F \in \Fh^{\diamond}} \int_F \varphi \dS.
\end{equation*}

Boldface will be used to denote~$d$-dimensional vector fields.
We further employ the following standard DG notation for weighted averages~($\mvl{\cdot}_{\alpha}$,\  with~$\alpha \in [0, 1]$), spatial normal jumps~$(\jump{\cdot}_{\bN})$, and temporal normal jumps~$(\jump{\cdot}_t)$ for 
piecewise scalar functions and vector fields:
\begin{align*}
&\begin{cases}
\mvl{w}_{\alpha} : = (1 - \alpha) w_{|_{K_1}} + \alpha w_{|_{K_2}}\\
\mvl{\btau}_{\alpha} : = (1 - \alpha) \btau_{|_{K_1}} + \alpha \btau_{|_{K_2}}
\end{cases}
&&\oon F \in \mathcal{F}_{K_1}^{\rtime} \cap \mathcal{F}_{K_2}^{\rtime} \text{ for neighboring~$K_1,\, K_2  \in \Th$}, 
\\
&\begin{cases}
\jump{w}_\bN : = w_{|_{K_1}} \vnSpace{K_1} + w_{|_{K_2}} \vnSpace{K_2}\\
\jump{\btau}_\bN : = \btau_{|_{K_1}} \cdot \vnSpace{K_1} + \btau_{|_{K_2}} \cdot \vnSpace{K_2}
\end{cases}
&&
\oon F \in \mathcal{F}_{K_1}^{\rtime} \cap \mathcal{F}_{K_2}^{\rtime}  \text{ for neighboring~$K_1,\, K_2  \in \Th$}, 
\\
&
\ \jump{w}_t : = w_{|_{K_1}} \vnTime{K_1} + w_{|_{K_2}} \vnTime{K_2} = w^- - w^+
&& \oon F \in \mathcal{F}_{K_1}^{\rspace} \cap \mathcal{F}_{K_2}^{\rspace} \text{ for neighboring~$K_1,\, K_2  \in \Th$},
\end{align*}
where~$\vnSpace{K}\in\IR^d$ and~$\vnTime{K} \in \IR$ are, respectively, the space and time components of the outward-pointing unit 
normal vectors on~$\deK\cap\Ftime$ and~$\deK \cap \Fspa$.  The superscripts ``$-$" and ``$+$" are used to indicate the traces of a function on a space-like facet from the elements ``before" ($-$) and ``after" ($+$) such a facet. 

Finally, we define the following space--time broken function spaces of order~$r$:
\begin{equation*}
\begin{split}
H^r(\Th)& :=\{v\in L^2(Q_T) : \; v_{|_{K}}\in H^r(K)\; \forall K\in \Th\} \qquad \text{for } r \in \IR_0^+,\\
\EFC{r}\Th& :=\{v:Q_T\to\IR : \; v_{|_{K}} \in \EFC{r}K\; \forall K\in \Th\} \qquad \text{for }r\in\IN_0. 
\end{split}
\end{equation*}

\subsection{Space--time LDG formulation
\label{SECT::LDG-METHOD}}
Let~$\p = (\pK)_{K \in \Th}$ be a degree vector, which assigns an approximation degree~$\pK \geq 1$ to each element~$K \in \Th$. We consider two generic broken discrete spaces
\begin{equation*}
    \Vp(\Th) := \prod_{K \in \Th} \VpK(K) \quad \text{ and } \quad \Mp(\Th) := \prod_{K \in \Th} \MpK(K),
\end{equation*}
where~$\VpK(K) \subset H^1(K)$ and~$\MpK(K) \subset H^1(K)^d$ are local finite element spaces, which are assumed to satisfy the following condition.
\begin{assumption}[Local compatibility condition]
\label{ASM::COMPATIBILITY-CONDITION}
For each element~$K \in \Th$, the pair~$(\VpK(K), \MpK(K))$ satisfies
\begin{equation*}
\nablax \VpK(K) \subset \MpK(K).
\end{equation*}
\end{assumption}

Our method is derived similarly to the LDG method in~\cite{Castillo_Cockburn_Perugia_Schotzau:2000} for elliptic problems. First, introducing the heat flux density variable~$\q : \QT \rightarrow \IR^d$, defined as~$\q := - \bk \nablax u$, equation~\eqref{EQN::MODEL-PROBLEM-1} can be rewritten as
\begin{equation*}
    \dpt u + \nablax \cdot \q = f \qquad \text{in } \QT.
\end{equation*}

The proposed space--time LDG formulation then reads: find~$(\uh, \qh) \in \Vp(\Th) \times \Mp(\Th)$ such that, for all~$K = \Kx \times \Kt \in \Th$, the following equations are satisfied:
\begin{subequations}
\label{EQN::LDG-K}
\begin{alignat}{3}
\int_{K} \bk^{-1} \qh \cdot \rh \dV & = - \int_{\Kt} \int_{\partial \Kx} \UFLUX \rh \cdot \vnSpace{K} \dS \dt  + \int_{K} \uh \nablax \cdot  \rh \dV & & \qquad   \forall \rh \in \Mp(\Th),\\
\nonumber
\int_K \dpt \uh \vh \dV & + \int_{\partial \Kt} \int_{\Kx} \vh (\UFLUX - \uh) \vnTime{K} \dx \ds  \\
    + \int_{\Kt} \int_{\partial \Kx} & \vh \QFLUX \cdot \vnSpace{K} \dS \dt - \int_K \qh \cdot \nablax \vh \dV   = \int_K f \vh \dV & & \qquad \forall \vh \in \Vp(\Th),
\end{alignat} 
\end{subequations}
where the \emph{numerical fluxes}~$\UFLUX$ and~$\QFLUX$ are, respectively, approximations of the traces of~$u$ and~$\q$ on~$\Fh$. For each mesh facet~$F \in \Fh$, we choose the numerical fluxes as follows:
\begin{align}
\label{EQN::NUMERICAL-FLUXES}
\UFLUX{}_{|_{F}} & := 
\begin{cases}
    \uh^- & \text{ if } F\in \Fspa, \\
    \uh & \text{ if } F \in \FT,\\
    u_0 & \text{ if } F \in \FO,\\
    \mvl{\uh}_{\alpha_F}
    & \text{ if } F \in \Ftime, \\
    \gD & \text{ if } F \in \FD,
\end{cases} \qquad \QFLUX{}_{|_{F}} := 
\begin{cases}
    \mvl{\qh}_{1 - \alpha_F} + \eta_F \jump{\uh}_{\bN} & \text{ if } F \in \Ftime, \\
    \qh + \eta_F (\uh - \gD) \bnOmega & \text{ if } F \in \FD,
\end{cases}
\end{align}
where~$\alpha_F \in [0, 1]$, 
and~$\eta_F \in L^{\infty}(\Ftime\cup\FD)$ is a stabilization function with~$\essinf_{\Ftime \cup \FD} \eta_F > 0$. 
The choices~$\alpha_F = 0$ or~$\alpha_F = 1$ for all time-like facets correspond to directional numerical fluxes, which are expected to increase the sparsity of the matrix associated with the LDG discretization of the spatial gradient operator~$\nablax (\cdot)$, provided that the unit normal vectors~$\nF$ are chosen appropriately; cf. \cite{Castillo_2002-stencil}.
Moreover, weighted averages in the definition of the numerical fluxes can be used to improve the robustness of the method with respect to strong local variations in meshsize, polynomial degree, and diffusion coefficients (see~\cite{Dong_Georgoulis:2022}).

In order to write the space--time LDG formulation~\eqref{EQN::LDG-K} in operator form, we define the following bilinear forms and linear functionals:
\begin{alignat*}{3}
\mh(\uh, \vh) & := \sum_{K \in \Th} \int_K \dpt \uh \vh \dV - \int_{\Fspa} \vh^+ \jump{\uh}_t \dx + \int_{\FO} \uh \vh \dx & & \quad \forall (\uh, \vh) \in \Vp(\Th) \times \Vp(\Th), \\
\bdh(\qh, \rh) & := \sum_{K \in \Th}  \int_K \bk^{-1} \qh \cdot \rh \dV & & \quad \forall (\qh, \rh) \in \Mp(\Th) \times \Mp(\Th), \\
\bh(\uh, \rh) & := \sum_{K \in \Th} \int_K \nablax \uh \cdot \rh \dV \\
& \quad -\int_\Ftime \jump{\uh}_{\bN} \cdot \mvl{\rh}_{1 - \alpha_F}  \dS - \int_{\FD} \uh \rh \cdot \vnSpace{\Omega} \dS  
& & \quad  \forall (\uh, \rh) \in \Vp(\Th) \times \Mp(\Th), \\
\suh(\uh, \vh) & := \int_{\Ftime} \eta_F \jump{\uh}_{\bN} \cdot \jump{\vh}_{\bN} \dS + \int_{\FD} \eta_F \uh \vh  \dS & & \quad \forall (\uh, \vh) \in \Vp(\Th) \times \Vp(\Th), \\
\lqh(\rh) & := -\int_{\FD} \gD \rh \cdot \vnSpace{\Omega} \dS  & & \quad  \forall \rh \in \Mp(\Th), \\
\luh(\vh) & := \int_{\QT} f \vh \dV + \int_{\FD} \eta_F \gD \vh \dS + \int_{\FO} u_0 \vh \dx & & \quad  \forall \vh \in \Vp(\Th).
\end{alignat*}

Substituting the definition of the numerical fluxes into~\eqref{EQN::LDG-K}, summing up over all the elements~$K \in \Th$, and using the average-jump identity
$$\mvl{\vh}_{\alpha_F} \jump{\rh}_{\bN} + \mvl{\rh}_{1 - \alpha_F} \cdot \jump{\vh}_{\bN} = \jump{\vh \rh}_{\bN},$$
the following variational problem is obtained: find~$(\uh, \qh) \in \Vp(\Th) \times \Mp(\Th)$ such that
\begin{subequations}
\label{EQN::VARIATIONAL-DG}
\begin{alignat}{3}
\label{EQN::VARIATIONAL-DG-1}
\bdh(\qh, \rh)
+ \bh(\uh, \rh)
& =  \lqh(\rh) & & \qquad \forall \rh \in \Mp(\Th), \\ 
\label{EQN::VARIATIONAL-DG-2}
 \mh(\uh, \vh) - \bh(\vh, \qh) + \suh(\uh, \vh) & = \luh(\vh)  & & \qquad \forall \vh \in \Vp(\Th).
\end{alignat}
\end{subequations}
\begin{remark}[Flux formulation]
The variational formulation~\eqref{EQN::LDG-K} can be seen as an extension of the unified DG flux formulation for elliptic problems in~\cite{Arnold-Brezzi-Cockburn-Marini:2001} to the parabolic problem in~\eqref{EQN::MODEL-PROBLEM}. 
Our choice in~\eqref{EQN::NUMERICAL-FLUXES} corresponds to using upwind fluxes for~$\uh$ on the space-like facets (in accordance with causality in time), and standard LDG numerical fluxes based on weighted averages on the time-like facets for both~$\uh$ and~$\qh$ (see, e.g., \cite[\S2.1]{Castillo_Cockburn_Perugia_Schotzau:2000} and~\cite[\S3.2]{Castillo_Sequeira:2013}).
 The space--time interior-penalty DG method in~\cite{Cangiani_Dong_Georgoulis:2017} can be recovered by setting $\alpha_F = 1/2$ in the definition of~$\UFLUX$ in~\eqref{EQN::NUMERICAL-FLUXES} for all 
the internal time-like facets, and defining~$\QFLUX$ as
\begin{equation*}
    \QFLUX{}_{|F} := \begin{cases}
    -\mvl{\bk \nablax \uh} + \eta_F \jump{\uh}_\bN & \text{if } F \in \Ftime, \\
    -\bk \nablax \uh + \eta_F (\uh - \gD) \vnOmega & \text{if } F \in \FD.
    \end{cases}
\end{equation*}
\eremk
\end{remark}
\subsection{Reduced formulation \label{SECT::REDUCED-FORMULATION}}
For the analysis, we rewrite the space--time LDG formulation~\eqref{EQN::VARIATIONAL-DG} as one involving only the primal unknown~$\uh$. 
We first define the lifting operator~$\Lu : \Vp(\Th) \rightarrow \Mp(\Th)$ and the LDG spatial gradient~$\nablaxLDG : \Vp(\Th) \rightarrow \Mp(\Th)$ for all~$\vh \in \Vp(\Th)$ as follows:\footnote{The lifting operator~$\Lu (\cdot)$ and the LDG spatial gradient~$\nablaxLDG (\cdot)$ can also be defined for functions in the space~$Y = L^2(0, T; H_0^1(\Omega))$. 
Any function~$v$ in such a space satisfies~$\jump{v}_{\sf N} = 0$ 
on~$\Ftime$ and~$v = 0$ on~$\FD$ almost everywhere. Therefore, for all~$v \in Y$, $\Lu v = 0$ and~$\nablaxLDG v = \nablax v$.}
\begin{equation}
\label{EQN::LIFTING-Uh}
\int_{\QT} \Lu \vh \cdot \rh \dV  = \int_\Ftime \jump{\vh}_{\bN} \cdot \mvl{\rh}_{1 - \alpha_F} \dS + \int_{\FD} \vh \rh \cdot \vnOmega \dS \qquad \forall \rh \in \Mp(\Th),
\end{equation}
and
\begin{equation}
\label{EQN::LDG-GRADIENT}
(\nablaxLDG \vh){}_{|_{K}} := \nablax \vh{}_{|_K} - (\Lu \vh)_{|_K} \qquad \forall K \in \Th.
\end{equation}

The following identity then follows from equation~\eqref{EQN::VARIATIONAL-DG-1}:
\begin{equation*}
\sum_{K \in \Th} \int_K \bk^{-1} \qh \cdot \rh \dV = -\sum_{K \in \Th}  \int_K \nablaxLDG \uh \cdot \rh \dV - \int_{\FD} \gD \rh \cdot \vnOmega \dS \qquad \forall \rh \in \Mp(\Th).
\end{equation*}
Consequently, for all~$\vh \in \Vp(\Th)$, we have
\begin{align*}
\bh(\vh, \qh) & \ = \sum_{K \in \Th} \int_K  \nablaxLDG \vh \cdot \qh \dV = \sum_{K \in \Th} \int_K \bk \nablaxLDG \vh \cdot \bk^{-1} \qh \dV \\
& 
= - \sum_{K \in \Th} \int_K \bk \nablaxLDG \uh \cdot \nablaxLDG \vh \dV - \int_{\FD} \gD \bk \nablaxLDG \vh \cdot \vnSpace{\Omega} \dS.
\end{align*}
Therefore, the mixed variational formulation~\eqref{EQN::VARIATIONAL-DG} reduces to: find~$\uh \in \Vp(\Th)$ such that
\begin{equation}
\label{EQN::REDUCED-VARIATIONAL-DG}
    \Bh(\uh, \vh):= \mh(\uh, \vh) + \Ah(\uh, \vh) = \ell_h(\vh) \qquad \forall \vh \in \Vp(\Th),
\end{equation}
where~$\Ah : \Vp(\Th) \times \Vp(\Th) \rightarrow \IR$ is the bilinear form associated with the LDG discretization of the spatial 
operator~$-\nablax \cdot (\bk \nabla (\cdot))$ given by
\begin{align}
\label{DEF::Ah}
\Ah(\uh, \vh) & : = \sum_{K \in \Th} \int_K \bk \nablaxLDG \uh 
 \cdot \nablaxLDG \vh \dV + \int_{\Ftime} \eta_F \jump{\uh}_{\bN} \cdot \jump{\vh}_{\bN} \dS + \int_{\FD} \eta_F \uh \vh \dS,
\end{align} 
and~$\ell_h : \Vp(\Th) \rightarrow \IR$ is the linear functional
\begin{equation*}
\label{DEF::lh}
    \ell_h(\vh) := \int_{\QT} f \vh \dV + \int_{\FO} u_0 \vh \dx + \int_{\FD} \gD( \eta_F \vh - \bk \nablaxLDG \vh \cdot \vnSpace{\Omega}) \dS. 
\end{equation*}

\begin{remark}[Lifting operator]
The lifting operator~$\Lu(\cdot)$ is just a tool for the analysis of the method, so it does not need to be implemented.
\eremk
\end{remark}

\begin{remark}[Reduced formulation]
The reduced formulation~\eqref{EQN::REDUCED-VARIATIONAL-DG} allows us to carry out an~$hp$-\textit{a priori} error analysis in the spirit of the one in~\cite{Perugia_Schotzau_2002} for elliptic problems.
\eremk
\end{remark}

\section{Inf-sup stability of the method\label{SECT::WELL-POSEDNESS}}
For the sake of simplicity, we henceforth assume homogeneous Dirichlet boundary conditions~$(\gD = 0)$. 

Although the coercivity in Lemma~\ref{PROP::COERCIVITY-Bh} of the bilinear form~$\Bh(\cdot, \cdot)$ is enough to guarantee the existence and uniqueness of a discrete solution, at least for pure Dirichlet boundary conditions (see Remark~\ref{REM::BOUNDARY-CONDITIONS} for other boundary conditions), 
the inf-sup theory in this section allows us to prove continuous dependence on the data and \emph{a priori} error estimates in stronger norms.

This section is devoted to studying the stability properties of the proposed method. 
More precisely, we show two different discrete inf-sup 
conditions, which ensure the existence and uniqueness of a solution to the space--time LDG formulation~\eqref{EQN::REDUCED-VARIATIONAL-DG}.
In Section~\ref{SECT::INF-SUP-NEWTON}, we prove an  inf-sup stability estimate that is valid for any choice of the discrete spaces satisfying the local compatibility condition in Assumption~\ref{ASM::COMPATIBILITY-CONDITION}, whereas, in Section~\ref{SECT::INF-SUP-POLYNOMIAL}, we present an inf-sup stability estimate that is valid for piecewise polynomial spaces that satisfy the following additional mild condition.
\begin{assumption}[Local inclusion condition]
\label{ASM::TIME-DERIVATIVE-ASSUMPTION}
For every~$K \in \Th$, the local discrete space~$\VpK(K)$ is closed under first-order differentiation in time, i.e.,
\begin{equation}
\label{EQN::DPT-CONDITION}
\dpt \VpK(K) \subset \VpK(K) \subset \Pp{\pK}{K}.
\end{equation}
\end{assumption}
\noindent This condition endows the method with additional stability properties.

We introduce the upwind-jump functional
\begin{equation}
\label{EQN::JUMP-FUNCTIONAL}
\SemiNorm{v}{\J}^2 := \frac12 \big(\Norm{v}{L^2(\FT)}^2 + \Norm{\jump{v}_t}{L^2(\Fspa)}^2 + \Norm{v}{L^2(\FO)}^2\big),
\end{equation}
and consider the following mesh-dependent seminorm in~$\Vp(\Th) + Y$:
\begin{align}
\label{EQN::LDG-NORM}
\Tnorm{v}{\LDG}^2 & := \Norm{\sqrt{\bk} \nablaxLDG v}{L^2(\QT)^d}^2 + \Norm{\eta_F^{\frac12}\jump{v}_{\bN}}{L^2(\Ftime)^{d}}^2 + \Norm{\eta_F^{\frac12} v}{L^2(\FD)}^2,
\end{align}
where~$\sqrt{\bk}$ is the symmetric positive definite matrix such that~$\sqrt{\bk} \cdot \sqrt{\bk} = \bk$.

\begin{lemma}[LDG norm]
\label{LEMMA::LDG-NORMS}
The LDG seminorm in~\eqref{EQN::LDG-NORM} is a norm in the space~$\Vp(\Th) + Y$.
\end{lemma}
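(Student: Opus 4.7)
The functional $\Tnorm{\cdot}{\LDG}$ is clearly a seminorm on $\Vp(\Th) + Y$, so it suffices to prove definiteness: if $v \in \Vp(\Th) + Y$ satisfies $\Tnorm{v}{\LDG} = 0$, then $v \equiv 0$. First I would extract the three pointwise consequences: thanks to the coercivity~\eqref{EQN::DIFFUSION} (which makes $\sqrt{\bk}$ pointwise invertible) and the assumption $\essinf_{\Ftime \cup \FD} \eta_F > 0$, the vanishing of $\Tnorm{v}{\LDG}$ forces
\begin{equation*}
\nablaxLDG v = \bzero \text{ on } \QT, \qquad \jump{v}_{\bN} = \bzero \text{ on } \Ftime, \qquad v = 0 \text{ on } \FD.
\end{equation*}

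Next I would show that the lifting $\Lu v$ itself is zero. By Assumption~\ref{ASM::COMPATIBILITY-CONDITION} the operator $\Lu$ maps into $\Mp(\Th)$ (and by the footnote extends trivially to $Y$), so $\Lu v \in \Mp(\Th)$ is an admissible test function in the defining identity~\eqref{EQN::LIFTING-Uh}. Taking $\rh = \Lu v$ on both sides, the right-hand side vanishes because of the second and third properties above, which gives $\Norm{\Lu v}{L^2(\QT)^d}^2 = 0$ and hence $\Lu v = \bzero$. Combining this with $\nablaxLDG v = \bzero$ and the definition~\eqref{EQN::LDG-GRADIENT}, I obtain $\nablax v|_K = \bzero$ on every $K = \Kx \times \Kt \in \Th$, so $v|_K(\bx, t) = c_K(t)$ is independent of $\bx$ on each element.

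Finally I would propagate the zero value from the Dirichlet boundary. On any element $K$ having a facet $F \in \FD$, condition $v|_F = 0$ combined with $v|_K = c_K(t)$ gives $c_K \equiv 0$ on the time interval of $F$. For any two neighboring elements $K_1, K_2$ sharing a time-like facet $F$, the condition $\jump{v}_{\bN} = \bzero$ reads $(c_{K_1}(t) - c_{K_2}(t)) \vnSpace{K_1} = \bzero$ for a.e.\ $t$ in the time interval of $F$; hence $c_{K_1} = c_{K_2}$ there. Since $\Omega$ is connected and every element can be linked to the boundary through a chain of time-like facets at each time level, this identifies $v \equiv 0$ a.e.\ on $\QT$.

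\textbf{Main obstacle.} The only delicate point is the last, combinatorial step, which must accommodate hanging time-like facets (cf.\ Figure~\ref{fig:hanging-time-like-facets}): at a generic time $t$, the spatial cross-sections $\{\Kx : t \in \Kt\}$ form a (possibly nonconforming) partition of $\Omega$, and one has to argue that the jump conditions transmit the zero value across this partition. This is not a geometric obstruction but merely requires invoking the connectedness of $\Omega$ and the fact that $\Th$ is a genuine partition of $\QT$, so the argument goes through with no additional assumption on the mesh.
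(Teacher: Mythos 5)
Your proposal is correct and follows essentially the same route as the paper: vanishing of the LDG norm forces $\jump{v}_{\bN}=0$ on $\Ftime$, $v=0$ on $\FD$, hence $\Lu v=0$ and $\nablax v=0$ elementwise, and the boundary condition then kills $v$. The paper merely compresses your final element-chaining step by observing that the vanishing jumps and Dirichlet trace already imply $v\in Y=L^2(0,T;H^1_0(\Omega))$, after which $\nablax v=0$ and the zero boundary trace give $v=0$ directly; your direct test of \eqref{EQN::LIFTING-Uh} with $\rh=\Lu v$ is an equally valid (and slightly more explicit) way to obtain $\Lu v=0$.
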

\begin{proof}
Let~$v \in \Vp(\Th) + Y$ such that~$\Tnorm{v}{\LDG} = 0$.
Then, the spatial normal jump~$\jump{v}_{\sf N} = 0$ on~$\Ftime$ and~$v = 0$ on~$\FD$ almost everywhere, which imply
that~$v \in Y$ and~$\Lu v = 0$. Moreover, the nondegeneracy condition in~\eqref{EQN::DIFFUSION} of the diffusion tensor~$\bk$ implies that~$\Norm{\nablax v}{L^2(\QT)^d} = 0$. Therefore, since~$v = 0$ on~$\FD$, we conclude that~$v = 0$. 
\end{proof}

We conclude this section proving some properties of the discrete bilinear forms~$\Ah(\cdot, \cdot)$ and~$\Bh(\cdot, \cdot)$.

\begin{lemma}[Coercivity and continuity of~$\Ah$]\label{LEMMA::COERCIVITY-CONTINUITY-Ah}
For all~$u, v \in \Vp(\Th) + Y$, it holds
\begin{subequations}
\begin{align}
\label{EQN::COERCIVITY-Ah}
    \Ah(u, u) = & \ \Tnorm{u}{\LDG}^2, \\
\label{EQN::CONTINUITY-Ah}
\Ah(u, v) \leq &\  \Tnorm{u}{\LDG} \Tnorm{v}{\LDG}.
\end{align}
\end{subequations}
\end{lemma}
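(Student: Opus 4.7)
The proof plan is to observe that $\Ah(\cdot,\cdot)$ is essentially a sum of three inner products whose associated norms are exactly the three terms in the definition of $\Tnorm{\cdot}{\LDG}$. Coercivity will then be an identity, and continuity will follow from Cauchy--Schwarz applied termwise, then once more to sum the three contributions.

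For coercivity, I would set $v=u$ in the definition \eqref{DEF::Ah} and use that $\bk$ is symmetric positive definite with $\sqrt{\bk}\cdot\sqrt{\bk}=\bk$, so that
\begin{equation*}
\bk\,\nablaxLDG u\cdot\nablaxLDG u=\big|\sqrt{\bk}\,\nablaxLDG u\big|^2\quad\text{pointwise in each }K\in\Th.
\end{equation*}
Summing over $K$ and combining with the two facet contributions involving $\eta_F$ reproduces exactly the three squared terms appearing in \eqref{EQN::LDG-NORM}, yielding \eqref{EQN::COERCIVITY-Ah} as an equality. The argument is valid on the full space $\Vp(\Th)+Y$, since by the footnote after \eqref{EQN::LDG-GRADIENT} and the definitions in Section~\ref{SECT::MESH-NOTATION}, for $v\in Y$ one has $\nablaxLDG v=\nablax v$, $\jump{v}_{\bN}=0$ on $\Ftime$, and $v=0$ on $\FD$, so all manipulations continue to make sense.

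For continuity, I would bound each of the three terms of $\Ah(u,v)$ separately by Cauchy--Schwarz. In the volume term I would rewrite
\begin{equation*}
\int_K \bk\,\nablaxLDG u\cdot\nablaxLDG v\dV=\int_K \sqrt{\bk}\,\nablaxLDG u\cdot\sqrt{\bk}\,\nablaxLDG v\dV,
\end{equation*}
and then apply Cauchy--Schwarz on $K$, while on the facets I would pair $\eta_F^{1/2}\jump{u}_{\bN}$ with $\eta_F^{1/2}\jump{v}_{\bN}$ and $\eta_F^{1/2}u$ with $\eta_F^{1/2}v$. Summing over elements and facets and applying a discrete Cauchy--Schwarz inequality to the resulting sum of three products of the form $a_i b_i$ yields
\begin{equation*}
\Ah(u,v)\leq\Big(\sum_{i=1}^3 a_i^2\Big)^{1/2}\Big(\sum_{i=1}^3 b_i^2\Big)^{1/2}=\Tnorm{u}{\LDG}\Tnorm{v}{\LDG},
\end{equation*}
which is \eqref{EQN::CONTINUITY-Ah}.

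There is no real obstacle: everything follows from the definitions, the positive definiteness of $\bk$ used via its symmetric square root, and Cauchy--Schwarz. The only mild point to keep track of is that the statement is formulated on $\Vp(\Th)+Y$ rather than on $\Vp(\Th)$ alone, but the extension is automatic thanks to the conventions already set after \eqref{EQN::LDG-GRADIENT}.
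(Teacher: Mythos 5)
Your proposal is correct and follows exactly the paper's (one-line) argument: coercivity is an identity read off from the definition of $\Ah$ together with $\bk=\sqrt{\bk}\cdot\sqrt{\bk}$, and continuity is termwise Cauchy--Schwarz followed by a discrete Cauchy--Schwarz over the three contributions. Your write-up simply spells out the details the paper leaves implicit, including the harmless extension to $\Vp(\Th)+Y$.
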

\begin{proof}
The coercivity identity~\eqref{EQN::COERCIVITY-Ah} is an immediate consequence of the definition of~$\Ah(\cdot, \cdot)$ in~\eqref{DEF::Ah}, whereas the continuity bound~\eqref{EQN::CONTINUITY-Ah} follows by using the Cauchy--Schwarz inequality.
\end{proof}

\begin{lemma}[Coercivity of~$\Bh$] 
\label{PROP::COERCIVITY-Bh}
The bilinear form~$\Bh(\cdot, \cdot)$ satisfies the following identity:
\begin{equation*}
\Bh(v, v) = \SemiNorm{v}{\J}^2 + \Tnorm{v}{\LDG}^2 \qquad \qquad  \hspace*{\fill}  \forall v \in \Vp(\Th) + H^1(\Th).
\end{equation*}
\end{lemma}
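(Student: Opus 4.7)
The plan is to split $\Bh(v,v) = \mh(v,v) + \Ah(v,v)$, invoke the coercivity identity~\eqref{EQN::COERCIVITY-Ah} of Lemma~\ref{LEMMA::COERCIVITY-CONTINUITY-Ah} to handle $\Ah(v,v) = \Tnorm{v}{\LDG}^2$, and show by element-wise integration by parts in time that $\mh(v,v) = \SemiNorm{v}{\J}^2$.

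\textbf{Step 1 (time integration by parts).} For each $K = \Kx \times \Kt \in \Th$, since $v \in H^1(K)$, I would write
\begin{equation*}
\int_K \dpt v \, v \dV = \frac12 \int_K \dpt(v^2) \dV = \frac12 \int_{\Kx}\big[v(\cdot, t_K^+)^2 - v(\cdot, t_K^-)^2\big] \dx,
\end{equation*}
where $t_K^\pm$ denote the endpoints of $\Kt$. Summing over $K \in \Th$, the right-hand side reorganizes as a sum of integrals over the space-like facets of $\Th$: each $F \in \Fspa$ shared between a ``lower" element $K_1$ and an ``upper" element $K_2$ contributes $\frac12\int_F [(v^-)^2 - (v^+)^2]\dx$ (this organization is not obstructed by hanging space-like facets, since each such facet still belongs to exactly one element on each of its two sides in the prismatic mesh structure), while facets in~$\FT$ and~$\FO$ contribute $+\frac12\|v\|_{L^2(\FT)}^2$ and $-\frac12\|v\|_{L^2(\FO)}^2$, respectively.

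\textbf{Step 2 (completing the square).} Adding the remaining terms in the definition of $\mh(\cdot,\cdot)$, namely $-\int_{\Fspa} v^+\jump{v}_t \dx = -\int_{\Fspa} v^+(v^- - v^+)\dx$ and $\int_{\FO} v^2 \dx$, and combining them with the space-like contribution from Step~1 yields the algebraic identity
\begin{equation*}
\tfrac12 (v^-)^2 - \tfrac12 (v^+)^2 - v^+(v^- - v^+) = \tfrac12 (v^- - v^+)^2 = \tfrac12 \jump{v}_t^2 \qquad \text{a.e. on } \Fspa,
\end{equation*}
so that
\begin{equation*}
\mh(v,v) = \tfrac12 \Norm{v}{L^2(\FT)}^2 + \tfrac12 \Norm{\jump{v}_t}{L^2(\Fspa)}^2 + \tfrac12 \Norm{v}{L^2(\FO)}^2 = \SemiNorm{v}{\J}^2.
\end{equation*}

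\textbf{Step 3 (conclusion).} Combining with $\Ah(v,v) = \Tnorm{v}{\LDG}^2$ from Lemma~\ref{LEMMA::COERCIVITY-CONTINUITY-Ah} gives the claimed identity.

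The only real subtlety is the bookkeeping in Step~1: one must verify that the sign convention used in $\jump{\cdot}_t = v^- - v^+$ (with ``$-$" before and ``$+$" after the facet) is consistent with the outward time-normals $\vnTime{K}$ of the two adjacent elements, and that hanging space-like facets split the top/bottom contributions of a large element into the correct collection of face integrals. Once this is observed, everything collapses into the completing-the-square identity above.
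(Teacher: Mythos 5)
Your proposal is correct and follows essentially the same route as the paper: element-wise integration by parts in time, the completing-the-square identity $\tfrac12\jump{v^2}_t - v^+\jump{v}_t = \tfrac12\jump{v}_t^2$ on $\Fspa$ (which the paper states in exactly this form), and the coercivity identity $\Ah(v,v)=\Tnorm{v}{\LDG}^2$. The extra bookkeeping you flag about hanging space-like facets and sign conventions is sound but is left implicit in the paper's one-line derivation.
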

\begin{proof}
Integration by parts in time and the identity~$\frac12 \jump{v^2}_t - v^+ \jump{\vh}_t = \frac12 \jump{v}_t^2$ on~$\Fspa$ give
\begin{align}
\nonumber
\mh(v, v) & = \sum_{K \in \Th} \int_K \dpt v v \dV - \int_{\Fspa} v^+ \jump{v}_t \dx + \Norm{v}{L^2(\FO)}^2 \\
\nonumber
& = \frac{1}{2} \Big(\Norm{v}{L^2(\FT)}^2 + \int_{\Fspa} \jump{v^2}_t \dx - \Norm{v}{L^2(\FO)}^2 \Big) - \int_{\Fspa} v^+ \jump{v}_t \dx + \Norm{v}{L^2(\FO)}^2 \\
\label{EQN::IDENTITY-Mh}
& = \SemiNorm{v}{\J}^2,
\end{align}
which, combined with the coercivity identity~\eqref{EQN::COERCIVITY-Ah} for~$\Ah(\cdot, \cdot)$, completes the proof.
\end{proof}

\subsection{First inf-sup stability estimate \label{SECT::INF-SUP-NEWTON}}
We prove that a discrete inf-sup stability estimate holds for any choice of the discrete spaces~$\Vp(\Th) \times \Mp(\Th)$ satisfying the local compatibility condition in Assumption~\ref{ASM::COMPATIBILITY-CONDITION}.

We introduce a discrete Newton potential operator~$\Nh : \Vp(\Th) + H^1(\Th) \rightarrow \Vp(\Th)$, which we define for any~$v \in \Vp(\Th) + H^1(\Th)$ as the solution to the following variational problem:
\begin{equation}
\label{EQN::DISCRETE-NEWTON}
    \Ah( \Nh v, \wh) = \mh(v, \wh)  \qquad \forall \wh \in \Vp(\Th).
\end{equation} 
The variational problem~\eqref{EQN::DISCRETE-NEWTON} has a unique solution due to Lemma~\ref{LEMMA::LDG-NORMS} and the coercivity of the bilinear form~$\Ah(\cdot, \cdot)$ in Lemma~\ref{LEMMA::COERCIVITY-CONTINUITY-Ah}.

We define the following mesh-dependent norm in~$\Vp(\Th) + H^1(\Th)\cap Y$:
\begin{align}
\label{EQN::DG-NORMS-3}
\Tnorm{v}{\LDGN}^2 & := \SemiNorm{v}{\J}^2 + \Tnorm{v}{\LDG}^2  + \Tnorm{\Nh v}{\LDG}^2.
\end{align}

Next theorem shows that the bilinear form~$\Bh(\cdot, \cdot)$ is inf-sup stable for any choice of the discrete spaces satisfying the local compatibility condition in Assumption~\ref{ASM::COMPATIBILITY-CONDITION}.
\begin{theorem}[Inf-sup stability] \label{THM::INF-SUP-NEWTON}
For any discrete spaces~$(\Vp(\Th), \Mp(\Th)
)$ satisfying Assumption~\ref{ASM::COMPATIBILITY-CONDITION}, it holds
\begin{align}
\label{EQN::INF-SUP-NEWTON}
\frac{1}{2\sqrt{2}}\Tnorm{\uh}{\LDGN} & \leq \sup_{\vh \in \Vp(\Th) \setminus \{0\}} \frac{\Bh(\uh, \vh)}{\Tnorm{\vh}{\LDG}} \qquad \forall \uh \in \Vp(\Th).
\end{align}
\end{theorem}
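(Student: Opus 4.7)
The plan is to test the bilinear form with the combination $\vh := \uh + \Nh \uh$, a choice motivated by the fact that the coercivity identity in Lemma~\ref{PROP::COERCIVITY-Bh} already controls $\SemiNorm{\uh}{\J}^2 + \Tnorm{\uh}{\LDG}^2$ when testing with $\uh$ itself, so the only missing ingredient in $\Tnorm{\uh}{\LDGN}^2$ is $\Tnorm{\Nh \uh}{\LDG}^2$. The defining property~\eqref{EQN::DISCRETE-NEWTON} of the discrete Newton potential is tailor-made to convert the awkward term $\mh(\uh, \Nh \uh)$ into $\Ah(\Nh \uh, \Nh \uh) = \Tnorm{\Nh \uh}{\LDG}^2$, which is what will produce the missing contribution.

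First I would split
\[
\Bh(\uh, \uh + \Nh \uh) = \Bh(\uh, \uh) + \mh(\uh, \Nh \uh) + \Ah(\uh, \Nh \uh),
\]
apply Lemma~\ref{PROP::COERCIVITY-Bh} to the first summand, and use~\eqref{EQN::DISCRETE-NEWTON} with $\wh = \Nh \uh \in \Vp(\Th)$ to rewrite $\mh(\uh, \Nh \uh) = \Ah(\Nh \uh, \Nh \uh) = \Tnorm{\Nh \uh}{\LDG}^2$. Then I would control the remaining cross term $\Ah(\uh, \Nh \uh)$ using the continuity bound~\eqref{EQN::CONTINUITY-Ah} together with Young's inequality in the form $\Tnorm{\uh}{\LDG}\Tnorm{\Nh \uh}{\LDG} \le \tfrac{1}{2}\Tnorm{\uh}{\LDG}^2 + \tfrac{1}{2}\Tnorm{\Nh \uh}{\LDG}^2$. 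Collecting terms yields
\[
\Bh(\uh, \uh + \Nh \uh) \ge \SemiNorm{\uh}{\J}^2 + \tfrac{1}{2}\Tnorm{\uh}{\LDG}^2 + \tfrac{1}{2}\Tnorm{\Nh \uh}{\LDG}^2 \ge \tfrac{1}{2}\Tnorm{\uh}{\LDGN}^2,
\]
recalling the definition~\eqref{EQN::DG-NORMS-3} of the $\LDGN$-norm.

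To finish, I would bound the denominator: by the triangle inequality on $\Tnorm{\cdot}{\LDG}$ (which is a norm on $\Vp(\Th) + Y$ by Lemma~\ref{LEMMA::LDG-NORMS}) and the elementary inequality $a + b \le \sqrt{2}\sqrt{a^2+b^2}$,
\[
\Tnorm{\uh + \Nh \uh}{\LDG} \le \Tnorm{\uh}{\LDG} + \Tnorm{\Nh \uh}{\LDG} \le \sqrt{2}\,\Tnorm{\uh}{\LDGN}.
\]
Dividing the lower bound on $\Bh(\uh, \uh+\Nh \uh)$ by this upper bound yields $\frac{1}{2\sqrt{2}}\Tnorm{\uh}{\LDGN}$, and taking the supremum over $\vh \in \Vp(\Th)\setminus\{0\}$ completes the argument.

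I do not anticipate any serious obstacle: the only place where Assumption~\ref{ASM::COMPATIBILITY-CONDITION} is needed is implicit, namely in ensuring that $\Nh$ is well defined (through $\LDG$ being a norm, Lemma~\ref{LEMMA::LDG-NORMS}, and the coercivity of $\Ah$, Lemma~\ref{LEMMA::COERCIVITY-CONTINUITY-Ah}), so that $\Nh \uh \in \Vp(\Th)$ is a legitimate test function. The entire argument then reduces to a short computation combining the coercivity of $\Bh$, the defining identity for $\Nh$, continuity of $\Ah$, and Young's inequality; no polynomial inverse estimate is invoked, which is consistent with the paper's emphasis.
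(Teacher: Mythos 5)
Your proposal is correct and follows essentially the same route as the paper: test with $\uh + \Nh\uh$, use the coercivity identity of Lemma~\ref{PROP::COERCIVITY-Bh}, convert $\mh(\uh,\Nh\uh)$ into $\Tnorm{\Nh\uh}{\LDG}^2$ via the defining relation~\eqref{EQN::DISCRETE-NEWTON}, absorb the cross term $\Ah(\uh,\Nh\uh)$ with continuity and Young, and divide by $\Tnorm{\uh+\Nh\uh}{\LDG}\le\sqrt{2}\,\Tnorm{\uh}{\LDGN}$. The constants and the role of Assumption~\ref{ASM::COMPATIBILITY-CONDITION} match the paper's argument exactly.
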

\begin{proof}
Let~$\uh \in \Vp(\Th)$ and~$\wh := \Nh \uh$. 
By the triangle inequality and the definition of~$\Tnorm{\cdot}{\LDGN}$ in~\eqref{EQN::DG-NORMS-3}, we have
\begin{equation}
\label{EQN::Cauchy--Schwarz-UH-WH}
\Tnorm{\uh + \wh}{\LDG} \leq \sqrt{2}\left(\Tnorm{\uh}{\LDG}^2 + \Tnorm{\wh}{\LDG}^2\right)^{\frac12}\leq \sqrt{2} \Tnorm{\uh}{\LDGN}.
\end{equation}
Moreover, using identity~\eqref{EQN::IDENTITY-Mh}, the definition of the discrete Newton potential in~\eqref{EQN::DISCRETE-NEWTON}, the coercivity and continuity bounds in Proposition~\ref{LEMMA::COERCIVITY-CONTINUITY-Ah} for~$\Ah(\cdot, \cdot)$, and the Young inequality, we get
\begin{align*}
\Bh(\uh, \uh + \wh) & = \Bh(\uh, \uh) + \mh(\uh, \wh) + \Ah(\uh, \wh)  \\
& = \SemiNorm{\uh}{\J}^2  + \Tnorm{\uh}{\LDG}^2 + \Ah(\wh, \wh) + \Ah(\uh, \wh) \\
& \geq \SemiNorm{\uh}{\J}^2  + \Tnorm{\uh}{\LDG}^2 + \Tnorm{\wh}{\LDG}^2 - \Tnorm{\uh}{\LDG} \Tnorm{\wh}{\LDG} \\
& \geq \SemiNorm{\uh}{\J}^2  + \frac12 \left(\Tnorm{\uh}{\LDG}^2 + \Tnorm{\wh}{\LDG}^2\right).
\end{align*}
Then, using bound~\eqref{EQN::Cauchy--Schwarz-UH-WH}, we deduce that
\begin{align*}
\Bh(\uh, \uh + \wh) \geq &\ \frac12 \Tnorm{\uh}{\LDGN}^2 \geq \frac{1}{2\sqrt{2}} \Tnorm{\uh}{\LDGN} \Tnorm{\uh + \wh}{\LDG},
\end{align*}
which completes the proof of~\eqref{EQN::INF-SUP-NEWTON}.
\end{proof}

The following result is an immediate consequence of the inf-sup stability estimate in Theorem~\ref{THM::INF-SUP-NEWTON}.
\begin{corollary}[Existence and uniqueness of a discrete solution]
\label{COR::WELL-POSEDNESS}
For any discrete spaces~$(\Vp(\Th),\Mp(\Th))$ satisfying Assumption~\ref{ASM::COMPATIBILITY-CONDITION}, there exists a unique solution~$(\uh, \qh) \in \Vp(\Th) \times \Mp(\Th)$ to the space--time LDG variational formulation~\eqref{EQN::VARIATIONAL-DG}. 
\end{corollary}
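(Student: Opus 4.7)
The plan is to exploit the equivalence between the mixed formulation~\eqref{EQN::VARIATIONAL-DG} and the reduced formulation~\eqref{EQN::REDUCED-VARIATIONAL-DG} derived in Section~\ref{SECT::REDUCED-FORMULATION}, and then deduce existence and uniqueness of the primal unknown~$\uh$ from Theorem~\ref{THM::INF-SUP-NEWTON} together with the finite dimensionality of~$\Vp(\Th)$.

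First, I would show existence and uniqueness of~$\uh \in \Vp(\Th)$ solving~\eqref{EQN::REDUCED-VARIATIONAL-DG}. Since~$\dim \Vp(\Th) < \infty$, the linear map~$\uh \mapsto \Bh(\uh, \cdot) \in \Vp(\Th)^*$ is a square operator, so bijectivity follows from injectivity. To establish injectivity, suppose~$\Bh(\uh, \vh) = 0$ for all~$\vh \in \Vp(\Th)$. Then Theorem~\ref{THM::INF-SUP-NEWTON} yields~$\Tnorm{\uh}{\LDGN} = 0$, which by definition of the~$\LDGN$-norm in~\eqref{EQN::DG-NORMS-3} forces~$\Tnorm{\uh}{\LDG} = 0$. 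Lemma~\ref{LEMMA::LDG-NORMS} then implies~$\uh = 0$. Hence~\eqref{EQN::REDUCED-VARIATIONAL-DG} admits a unique solution~$\uh \in \Vp(\Th)$ for every right-hand side, in particular for~$\ell_h(\cdot)$.

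Next, I would recover~$\qh$ from equation~\eqref{EQN::VARIATIONAL-DG-1}, viewed as a linear system for~$\qh \in \Mp(\Th)$ with datum determined by the already-constructed~$\uh$:
\begin{equation*}
\bdh(\qh, \rh) = \lqh(\rh) - \bh(\uh, \rh) \qquad \forall \rh \in \Mp(\Th).
\end{equation*}
The bilinear form~$\bdh(\cdot, \cdot)$ defines a weighted~$L^2$-inner product on~$\Mp(\Th)$, because~$\bk^{-1}$ is symmetric positive definite by~\eqref{EQN::DIFFUSION}. Since~$\dim \Mp(\Th) < \infty$, this square system is uniquely solvable, yielding a unique~$\qh \in \Mp(\Th)$.

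Finally, I would verify that the pair~$(\uh, \qh)$ just constructed also satisfies~\eqref{EQN::VARIATIONAL-DG-2}. This is obtained by reversing the reduction performed in Section~\ref{SECT::REDUCED-FORMULATION}: substituting the defining equation for~$\qh$ into~$\bh(\vh, \qh)$ and invoking the definitions of the lifting operator~$\Lu$ and of the LDG gradient~$\nablaxLDG$ in~\eqref{EQN::LIFTING-Uh}--\eqref{EQN::LDG-GRADIENT} (whose well-posedness relies on the compatibility condition in Assumption~\ref{ASM::COMPATIBILITY-CONDITION}) recasts~\eqref{EQN::VARIATIONAL-DG-2} as~\eqref{EQN::REDUCED-VARIATIONAL-DG} with test function~$\vh$; the latter holds by construction of~$\uh$. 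Conversely, any solution of the mixed system solves~\eqref{EQN::REDUCED-VARIATIONAL-DG}, so uniqueness of~$(\uh, \qh)$ is immediate. There is no genuine obstacle here: the theorem is a bookkeeping consequence of the inf-sup estimate of Theorem~\ref{THM::INF-SUP-NEWTON} and the positive-definiteness of~$\bdh$; the only point requiring care is checking that the reduction~$\eqref{EQN::VARIATIONAL-DG}\Longleftrightarrow\eqref{EQN::REDUCED-VARIATIONAL-DG}$ performed in Section~\ref{SECT::REDUCED-FORMULATION} is indeed an equivalence, not merely an implication.
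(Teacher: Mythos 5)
Your proposal is correct and follows essentially the same route as the paper: the paper presents the corollary as an immediate consequence of Theorem~\ref{THM::INF-SUP-NEWTON}, with the implicit argument being exactly yours — inf-sup plus finite dimensionality gives a unique $\uh$ for the reduced problem~\eqref{EQN::REDUCED-VARIATIONAL-DG}, and $\qh$ is then uniquely recovered from~\eqref{EQN::VARIATIONAL-DG-1} since $\bdh(\cdot,\cdot)$ is an inner product on~$\Mp(\Th)$ by~\eqref{EQN::DIFFUSION}. Your explicit check that the reduction of Section~\ref{SECT::REDUCED-FORMULATION} is a genuine equivalence (not merely an implication) is a detail the paper leaves tacit, but it is the right thing to verify and your verification is sound.
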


We conclude this section by showing a continuity bound for the bilinear form~$\Bh(\cdot, \cdot)$.
\begin{lemma}[Continuity of $\Bh$] \label{PROP::CONT-NEWTON}
For all~$v \in \Vp(\Th) + H^1(\Th)$ and~$\vh\in\Vp(\Th)$, 
it holds
\begin{equation*}
\Bh(v, \vh) \leq\sqrt{2}\Tnorm{v}{\LDGN}\Tnorm{\vh}{\LDG}.
\end{equation*}
\end{lemma}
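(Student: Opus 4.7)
The plan is to split $\Bh(v, \vh) = \mh(v, \vh) + \Ah(v, \vh)$ and bound each term separately by $\Tnorm{\vh}{\LDG}$ times a piece of $\Tnorm{v}{\LDGN}$. The diffusive part is immediate: since both $v$ and $\vh$ lie in $\Vp(\Th) + H^1(\Th)\cap Y$ where $\Ah$ is defined and continuous, the continuity estimate~\eqref{EQN::CONTINUITY-Ah} in Lemma~\ref{LEMMA::COERCIVITY-CONTINUITY-Ah} yields directly
\[
\Ah(v, \vh) \leq \Tnorm{v}{\LDG}\,\Tnorm{\vh}{\LDG}.
\]

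The key observation, and the only real idea in the proof, is that the temporal term $\mh(v, \vh)$ can be rewritten in terms of $\Ah$ by exploiting the definition of the discrete Newton potential. Since the test function $\vh$ belongs to $\Vp(\Th)$, the defining relation~\eqref{EQN::DISCRETE-NEWTON} of $\Nh v$ applies and gives
\[
\mh(v, \vh) = \Ah(\Nh v, \vh) \leq \Tnorm{\Nh v}{\LDG}\,\Tnorm{\vh}{\LDG},
\]
where the inequality is again~\eqref{EQN::CONTINUITY-Ah}. This is precisely the point where the assumption $\vh \in \Vp(\Th)$ is used; the statement would not hold for arbitrary $\vh \in \Vp(\Th) + H^1(\Th)$ because $\Nh$ is only defined through its action against discrete test functions.

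Adding the two bounds and applying the discrete Cauchy--Schwarz inequality $a + b \leq \sqrt{2}\,(a^2 + b^2)^{1/2}$ to the pair $(\Tnorm{v}{\LDG}, \Tnorm{\Nh v}{\LDG})$ gives
\[
\Bh(v, \vh) \leq \bigl(\Tnorm{v}{\LDG} + \Tnorm{\Nh v}{\LDG}\bigr)\Tnorm{\vh}{\LDG} \leq \sqrt{2}\,\bigl(\Tnorm{v}{\LDG}^2 + \Tnorm{\Nh v}{\LDG}^2\bigr)^{1/2}\Tnorm{\vh}{\LDG},
\]
and the final factor is bounded by $\Tnorm{v}{\LDGN}$ by the definition~\eqref{EQN::DG-NORMS-3}. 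No genuine obstacle appears; the whole argument is essentially the trick of hiding the nonsymmetric temporal bilinear form $\mh$ inside the symmetric coercive form $\Ah$ via $\Nh$, which is the same trick that made the $\Nh$-augmented norm $\Tnorm{\cdot}{\LDGN}$ the natural choice on the inf-sup side in Theorem~\ref{THM::INF-SUP-NEWTON}.
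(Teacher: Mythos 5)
Your proposal is correct and follows essentially the same route as the paper: the identity $\mh(v,\vh)=\Ah(\Nh v,\vh)$ from the definition of the discrete Newton potential, the continuity of $\Ah(\cdot,\cdot)$ applied to both terms, and the elementary inequality $a+b\le\sqrt{2}(a^2+b^2)^{1/2}$ together with the definition of $\Tnorm{\cdot}{\LDGN}$. The paper compresses these steps into a single display, so your write-up is just a more explicit version of the same argument.
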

\begin{proof}
Let~$v\in \Vp(\Th) + H^1(\Th)$ and~$\vh\in \Vp(\Th)$.
Using the definition of the bilinear form~$\Bh(\cdot, \cdot)$, the definition of the discrete Newton potential in~\eqref{EQN::DISCRETE-NEWTON}, the continuity of~$\Ah(\cdot, \cdot)$ in~\eqref{EQN::CONTINUITY-Ah}, and the Cauchy--Schwarz inequality, we obtain
\begin{align*}
\Bh(v, \vh) = \mh(v, \vh)+\Ah(v, \vh)=\Ah( \Nh v, \vh)+\Ah(v, \vh) 
\leq \sqrt{2}\Tnorm{v}{\LDGN} \Tnorm{\vh}{\LDG}.
\end{align*} 
\end{proof}

\begin{remark}[Stability parameter~$\eta_u$] 
The space--time interior-penalty DG method in~\cite{Cangiani_Dong_Georgoulis:2017} requires a ``sufficiently large" stabilization parameter in order to guarantee well posedness,
whereas the proposed method requires only that~$\essinf_{\Ftime \cup \FD} \eta_F > 0$.
This is to be expected, as the stability term is related only to the discretization of the spatial operator~$-\nabla(\bk \nabla (\cdot))$.
\eremk
\end{remark}

\subsection{Second inf-sup stability estimate
\label{SECT::INF-SUP-POLYNOMIAL}}
We now consider piecewise polynomial spaces satisfying Assumptions~\ref{ASM::COMPATIBILITY-CONDITION} and~\ref{ASM::TIME-DERIVATIVE-ASSUMPTION}.
In order to avoid the restriction of a uniformly bounded number of time-like facets for the elements in~$\Th$, 
we make the following assumption, which extends the one in~\cite[Asm. 2.1]{Cangiani_Dong_Georgoulis:2017} to general prismatic space--time meshes.

\begin{assumption}[Mesh assumption]
\label{ASSUMPTION::PRISMATIC-MESH} If~$d > 1$, we assume that, for any~$K = \Kx \times \Kt \in \Th$, the boundary~$\partial \Kx$ can be subtriangulated into nonoverlapping~$(d - 1)$-dimensional simplices~$\mathcal{T}_{\partial \Kx} := \{\Fx^i\}_{i = 1}^n$, with~$n \in \IN$. Moreover, there exists a set of nonoverlapping~$d$-dimensional simplices~$\{s_K^{\Fx^i}\}_{i = 1}^n$ contained in~$\Kx$ such that, for~$i = 1, \dots, n$,
$\partial s_K^{\Fx^i} \cap \partial \Kx = \Fx^i$ and
\begin{equation*}
\hKx \leq C_s \frac{d \abs{s_{K}^{\Fx^i}}}{\abs{\Fx^i}},
\end{equation*}
for some constant~$C_s > 0$ independent of the discretization parameters, the number of time-like facets per element, and the facet measures. 
\end{assumption}

Fixing a constant~$\eta^\star > 0$, for all time-like facets~$F \in \Ftime \cup \FD$, we define the stabilization function as follows:
\begin{equation}
\label{EQN::STABILIZATION-TERM}
    \eta_F := \eta^\star \max_{
        K \in \Th : F \in \FKtime
        } \left\{\frac{\|\sqrt{\bk}_{|K}\|_{\IR^{d \times d}}^2(\pK + 1)(\pK + d)}{\hKx}\right\},
\end{equation}
where~$\|\cdot\|_{\IR^{d\times d}}$ is the~$2$-norm.

For each~$K \in \Th$, we define the following constants:
\begin{subequations}
\begin{align}
\hhKt & := \min \{h_{K'_t} \ : \ K' \in \Th \text{ and } \FKtime \cap \calF_{K'}^{\rtime} \neq \emptyset \},\\
\hpK & := \max \{p_{K'} \ : \ K' \in \Th \text{ and } \FKtime \cap \calF_{K'}^{\rtime} \neq \emptyset \}, \\
\label{EQN::LambdaK}
\lK & := \hhKt/\hpK^2. 
\end{align}
\end{subequations}
For convenience, we denote by~$\lambda_h$ the piecewise constant function defined as
$$\lambda_h{}_{|_K} = \lK \qquad  \forall K \in \Th.$$

Then, we define the following norm in~$\Vp(\Th) + H^1(\Th)\cap Y$:
\begin{equation}
\label{EQN::DG-NORMS-4}
\Tnorm{v}{\LDGp}^2  := \Tnorm{v}{\LDG}^2 + \SemiNorm{v}{\J}^2  + \sum_{K \in \Th} \lK \Norm{\dpt v}{L^2(K)}^2.
\end{equation}

Before proving the inf-sup stability estimate in Theorem~\ref{THM::INF-SUP-TWO} below, we recall some useful polynomial trace and inverse inequalities from \cite[\S 4.1]{Cangiani_Dong_Georgoulis:2017}. 

\begin{lemma}[Polynomial trace inequalities] \label{LEMMA::TRACE-INEQUALITY}
Let~$K = \Kx \times \Kt \in \Th$ and let Assumption~\ref{ASSUMPTION::PRISMATIC-MESH} hold. 
For all~$\vh \in \Pp{\pK}{K}$, we have
\begin{equation}
\label{EQN::TRACE-INEQUALITY-1}
\Norm{\vh}{L^2(F^*)}^2 \leq \frac{(\pK+1)(\pK+d)}{d} \frac{\abs{\Fx}}{\abs{\sKxF}} \Norm{\vh}{L^2(\Ft; L^2(\sKxF))}^2 \qquad \forall F^* = \Fx \times \Ft,
\end{equation}
where~$\Ft \subset \Kt$, $\Fx \in \mathcal{T}_{\partial \Kx}$ with $\Fx  = \partial \Kx \cap \partial \sKxF$, and~$\sKxF$ as in Assumption~\ref{ASSUMPTION::PRISMATIC-MESH} sharing~$\Fx$ with~$\Kx$. 
Moreover, there exists a positive constant~$\Ctr$ independent of~$K$ and the degree of approximation~$\pK$ such that
\begin{equation}
\label{EQN::TRACE-INEQUALITY-2}
\Norm{\vh}{L^2(\Kx \times \partial \Kt)}^2 \leq \Ctr \frac{\pK^2}{\hKt} \Norm{\vh}{L^2(K)}^2 \qquad \forall \vh \in \Pp{\pK}{K}.
\end{equation}
\end{lemma}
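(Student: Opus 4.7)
The plan is to reduce each bound to a one-variable polynomial trace inequality applied pointwise in the remaining variable, and then conclude by Fubini.

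\textbf{Bound~\eqref{EQN::TRACE-INEQUALITY-1}.} For any fixed~$t \in \Ft$, the restriction~$\vh(\cdot, t)$ is a polynomial of total degree at most~$\pK$ on~$\Kx$, hence in particular on the~$d$-simplex~$\sKxF \subset \Kx$ provided by Assumption~\ref{ASSUMPTION::PRISMATIC-MESH}. I would then apply the classical Warburton--Hesthaven polynomial trace inequality on~$\sKxF$ with the face~$\Fx = \partial \sKxF \cap \partial \Kx$, which gives
\begin{equation*}
\Norm{\vh(\cdot, t)}{L^2(\Fx)}^2 \leq \frac{(\pK+1)(\pK+d)}{d}\frac{|\Fx|}{|\sKxF|}\, \Norm{\vh(\cdot,t)}{L^2(\sKxF)}^2.
\end{equation*}
Integrating in~$t$ over~$\Ft$ and invoking Fubini (since~$F^* = \Fx \times \Ft$) immediately yields~\eqref{EQN::TRACE-INEQUALITY-1}.

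\textbf{Bound~\eqref{EQN::TRACE-INEQUALITY-2}.} Symmetrically, for any fixed~$\bx \in \Kx$, the restriction~$\vh(\bx, \cdot)$ is a univariate polynomial of degree at most~$\pK$ on the interval~$\Kt$. I would then invoke the standard one-dimensional polynomial trace inequality on an interval: there exists~$\Ctr > 0$, independent of both~$\hKt$ and~$\pK$, such that
\begin{equation*}
|\vh(\bx, t^\star)|^2 \leq \Ctr\, \frac{\pK^2}{\hKt}\, \Norm{\vh(\bx, \cdot)}{L^2(\Kt)}^2
\end{equation*}
for each endpoint~$t^\star$ of~$\Kt$. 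Summing over the two endpoints of~$\Kt$ (which together form the space-like boundary~$\Kx \times \partial \Kt$ up to a~$\Kx$-factor), integrating in~$\bx$ over~$\Kx$, and applying Fubini delivers~\eqref{EQN::TRACE-INEQUALITY-2}.

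There is no real obstacle: both estimates are essentially Fubini applied to a well-known sharp trace inequality for polynomials of total degree~$\pK$ on a simplex and on an interval, respectively. The only care required is ensuring that total-degree polynomials on the prism~$K$ restrict correctly to the appropriate fibers, which is immediate from the fact that~$\vh \in \Pp{\pK}{K}$ implies~$\vh(\cdot, t) \in \Pp{\pK}{\Kx}$ and~$\vh(\bx, \cdot) \in \Pp{\pK}{\Kt}$ for every fixed~$t$ and~$\bx$.
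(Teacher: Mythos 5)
Your proof is correct. Note that the paper does not actually prove this lemma: it is recalled verbatim from \cite[\S 4.1]{Cangiani_Dong_Georgoulis:2017}, and your argument --- freezing one variable, applying the sharp Warburton--Hesthaven trace inequality on the simplex~$\sKxF$ (resp.\ the scaled one-dimensional endpoint inequality on~$\Kt$) to the restricted polynomial, which has total degree at most~$\pK$ in the remaining variables, and then integrating via Fubini --- is precisely the standard tensorization argument by which the cited reference establishes both bounds, so there is nothing to add.
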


\begin{lemma}[Polynomial inverse estimates] 
\label{LEMMA::INVERSE-ESTIMATE}
Let~$\Th$ satisfy Assumption~\ref{ASSUMPTION::PRISMATIC-MESH} and let~$K = \Kx \times \Kt \in \Th$. There exists a positive constant~$\Cinv$ independent of~$K$ and the degree of approximation~$\pK$ such that, for all~$\vh \in \Pp{\pK}{K}$, it holds
\begin{subequations}
\begin{align}
\label{EQN::INVERSE-ESTIMATE-1}
\Norm{\dpt \vh}{L^2(K)}^2 & \leq \Cinv \frac{\pK^4}{\hKt^2}\Norm{\vh}{L^2(K)}^2, \\
\label{EQN::INVERSE-ESTIMATE-2}
\Norm{\dpt \vh}{L^2(F)}^2 & \leq \Cinv \frac{\pK^4}{|F_t|^2}\Norm{\vh}{L^2(F)}^2 \qquad \qquad \forall F = \Fx \times \Ft,
\end{align}
\end{subequations}
where~$\Ft \subset \Kt$, $\Fx = \partial \Kx \cap \partial \sKxF \in \mathcal{T}_{\partial \Kx}$, and~$\sKxF$ as in Assumption~\ref{ASSUMPTION::PRISMATIC-MESH}.
\end{lemma}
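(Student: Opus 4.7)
The plan is to reduce both estimates to the classical one-dimensional Markov inequality in time, exploiting the prismatic structure~$K = K_\bx \times K_t$ and the fact that on any line parallel to the~$t$-axis, $v_h\in\Pp{\pK}{K}$ restricts to a univariate polynomial of degree at most~$\pK$ in~$t$.

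For the volumetric bound~\eqref{EQN::INVERSE-ESTIMATE-1}, I would proceed as follows. Recall that the Markov--Bernstein inverse inequality on a one-dimensional interval~$I$ of length~$|I|$ states that every~$q\in\Pp{\pK}{I}$ satisfies
\begin{equation*}
\Norm{q'}{L^2(I)}^2 \leq \Cinv \frac{\pK^4}{|I|^2}\Norm{q}{L^2(I)}^2,
\end{equation*}
with a constant~$\Cinv$ independent of~$\pK$ and~$|I|$ (this is standard, obtainable via Legendre expansions or affine mapping to a reference interval). Applying this with~$I = \Kt$ to the univariate polynomial~$t \mapsto v_h(\bx, t)$ for almost every fixed~$\bx \in \Kx$, and then integrating in~$\bx$ over~$\Kx$, yields
\begin{equation*}
\Norm{\dpt v_h}{L^2(K)}^2 = \int_{\Kx} \Norm{\dpt v_h(\bx, \cdot)}{L^2(\Kt)}^2 \di \bx
\leq \Cinv \frac{\pK^4}{\hKt^2} \int_{\Kx} \Norm{v_h(\bx, \cdot)}{L^2(\Kt)}^2 \di \bx = \Cinv \frac{\pK^4}{\hKt^2}\Norm{v_h}{L^2(K)}^2.
\end{equation*}

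For the facet bound~\eqref{EQN::INVERSE-ESTIMATE-2}, I would argue analogously. Since~$F = \Fx \times \Ft$ is time-like (here~$\Fx$ is a flat~$(d-1)$-simplex on~$\partial\Kx$ and~$\Ft \subset \Kt$), and~$v_h\in\Pp{\pK}{K}$ restricted to each line~$\{\bx\} \times \Ft$ with~$\bx \in \Fx$ is a univariate polynomial of degree at most~$\pK$ in~$t$, the one-dimensional Markov inequality applied on~$\Ft$ gives
\begin{equation*}
\Norm{\dpt v_h(\bx, \cdot)}{L^2(\Ft)}^2 \leq \Cinv \frac{\pK^4}{|\Ft|^2} \Norm{v_h(\bx, \cdot)}{L^2(\Ft)}^2 \qquad \text{for a.e. } \bx \in \Fx,
\end{equation*}
and integration in~$\bx$ over~$\Fx$ yields~\eqref{EQN::INVERSE-ESTIMATE-2}, possibly relabeling~$\Cinv$.

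There is essentially no genuine obstacle here: the only subtlety to double-check is that~$\Pp{\pK}{K}$, though not a tensor-product space, still has the property that its restriction to any line parallel to the~$t$-axis is contained in the univariate polynomial space of degree~$\pK$, which is immediate from the total-degree definition. Note that the constant~$\Cinv$ in both estimates is independent of~$K$ and~$\pK$ because it coincides with the universal constant from the one-dimensional Markov inequality on a reference interval, and the shape-regularity-type Assumption~\ref{ASSUMPTION::PRISMATIC-MESH} plays no role in the proof of this particular lemma (it only enters in the trace inequality~\eqref{EQN::TRACE-INEQUALITY-1}).
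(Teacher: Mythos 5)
Your proposal is correct. The paper gives no proof of this lemma at all---it simply recalls the two estimates from~\cite[\S 4.1]{Cangiani_Dong_Georgoulis:2017}---and your argument (restriction of a total-degree polynomial to lines parallel to the $t$-axis, the one-dimensional $L^2$ Markov inequality with its $\pK^2/|I|$ scaling, and Fubini over~$\Kx$ or over~$\Fx$) is exactly the standard tensorization route by which such anisotropic inverse estimates are established for prismatic elements; your observation that Assumption~\ref{ASSUMPTION::PRISMATIC-MESH} is not actually needed here is also accurate.
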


We are now in a position to prove the main result in this section.
\begin{theorem}[Inf-sup stability]\label{THM::INF-SUP-TWO}
Let~$(\Vp(\Th), \Mp(\Th))$ be discrete spaces satisfying Assumptions~\ref{ASM::COMPATIBILITY-CONDITION} and~\ref{ASM::TIME-DERIVATIVE-ASSUMPTION}, and let~$\Th$ satisfy Assumption~\ref{ASSUMPTION::PRISMATIC-MESH}. Let also the stabilization function~$\eta_F$ be given by~\eqref{EQN::STABILIZATION-TERM}. Then, there exists a positive constant~$\gamma_I$ independent of the meshsize~$h$ and the degree vector~$\p$ such that
\begin{align}
\label{EQN::INF-SUP-DERIVATIVE}
\gamma_I \Tnorm{\uh}{\LDGp} & \leq  \sup_{\vh \in \Vp(\Th) \setminus \{0\}} \frac{\Bh(\uh, \vh)}{\Tnorm{\vh}{\LDGp}} \qquad \forall \uh \in \Vp(\Th).
\end{align}
\end{theorem}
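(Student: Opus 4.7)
The plan is to augment the coercivity of $\Bh(\cdot,\cdot)$ (Lemma~\ref{PROP::COERCIVITY-Bh}) with a test-function contribution that directly probes $\dpt \uh$. Concretely, given $\uh \in \Vp(\Th)$, I would try
\[
\vh := \uh + \delta \lambda_h \dpt \uh,
\]
for a parameter $\delta > 0$ to be fixed later. By Assumption~\ref{ASM::TIME-DERIVATIVE-ASSUMPTION} and the piecewise constancy of $\lambda_h$ on $\Th$, the summand $\lambda_h \dpt \uh$ lies in $\Vp(\Th)$, so $\vh$ is admissible. No Newton potential is needed here, because $\Tnorm{\cdot}{\LDGp}$ does not contain such a term, and coercivity already delivers $\SemiNorm{\uh}{\J}^2 + \Tnorm{\uh}{\LDG}^2$.

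For the lower bound, I would split $\Bh(\uh, \vh) = \Bh(\uh, \uh) + \delta \Bh(\uh, \lambda_h \dpt \uh)$. By Lemma~\ref{PROP::COERCIVITY-Bh} the first term equals $\SemiNorm{\uh}{\J}^2 + \Tnorm{\uh}{\LDG}^2$. Expanding the second,
\[
\mh(\uh, \lambda_h \dpt \uh) = \sum_{K \in \Th} \lK \Norm{\dpt \uh}{L^2(K)}^2 - \int_{\Fspa} \lambda_h^+ (\dpt \uh)^+ \jump{\uh}_t \dx + \int_{\FO} \lambda_h \uh \dpt \uh \dx,
\]
whose leading term is exactly the missing $\dpt$-contribution to $\Tnorm{\uh}{\LDGp}^2$. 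The two cross-facet terms are controlled using the polynomial trace inequality~\eqref{EQN::TRACE-INEQUALITY-2} combined with the crucial balance $\lK \pK^2/\hKt \leq 1$ (which follows from $\lK = \hhKt/\hpK^2 \leq \hKt/\pK^2$); this yields $\Norm{\sqrt{\lambda_h^+}(\dpt \uh)^+}{L^2(\Fspa)}^2 \lesssim \sum_K \lK \Norm{\dpt \uh}{L^2(K)}^2$, after which a weighted Young's inequality absorbs the cross terms into $\SemiNorm{\uh}{\J}^2$ plus a small fraction of the volume term. For $\Ah(\uh, \lambda_h \dpt \uh)$, continuity~\eqref{EQN::CONTINUITY-Ah} reduces the task to bounding $\Tnorm{\lambda_h \dpt \uh}{\LDG} \lesssim \Tnorm{\uh}{\LDG}$ via the inverse estimates~\eqref{EQN::INVERSE-ESTIMATE-1}--\eqref{EQN::INVERSE-ESTIMATE-2} and the same balance. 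Choosing $\delta > 0$ sufficiently small then delivers $\Bh(\uh, \vh) \gtrsim \Tnorm{\uh}{\LDGp}^2$ with a constant independent of $h$ and $\p$.

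For the denominator, the triangle inequality gives $\Tnorm{\vh}{\LDGp} \leq \Tnorm{\uh}{\LDGp} + \delta \Tnorm{\lambda_h \dpt \uh}{\LDGp}$, so it suffices to show $\Tnorm{\lambda_h \dpt \uh}{\LDGp} \lesssim \Tnorm{\uh}{\LDGp}$. Applying the inverse estimate~\eqref{EQN::INVERSE-ESTIMATE-1} twice controls $\sum_K \lK \Norm{\dpt(\lambda_h \dpt \uh)}{L^2(K)}^2 = \sum_K \lK^3 \Norm{\dptt \uh}{L^2(K)}^2$ by $\sum_K \lK \Norm{\dpt \uh}{L^2(K)}^2$, and the trace inequality~\eqref{EQN::TRACE-INEQUALITY-2} together with the same scaling controls $\SemiNorm{\lambda_h \dpt \uh}{\J}^2$ by the same quantity. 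The principal technical obstacle is the elliptic piece of $\Tnorm{\lambda_h \dpt \uh}{\LDG}$: since $\Lu$ is a global operator and $\lambda_h$ may jump across time-like facets, $\Lu(\lambda_h \dpt \uh)$ is not simply $\lambda_h \Lu(\dpt \uh)$. Estimating it requires testing the defining relation~\eqref{EQN::LIFTING-Uh} against $\Lu(\lambda_h \dpt \uh)$ itself and invoking the trace inequality~\eqref{EQN::TRACE-INEQUALITY-1}, the inverse estimate~\eqref{EQN::INVERSE-ESTIMATE-2}, and the precise scaling~\eqref{EQN::STABILIZATION-TERM} of $\eta_F$ so that the extra $(\pK+1)(\pK+d)/\hKx$ factors cancel. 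Combining the resulting bounds then yields the inf-sup inequality~\eqref{EQN::INF-SUP-DERIVATIVE} with a constant $\gamma_I$ independent of the meshsize and of the polynomial degree.
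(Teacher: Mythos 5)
Your proposal is correct and follows essentially the same route as the paper: the same test function $\vh = \uh + \delta\,\lambdah\,\dpt\uh$, the same splitting via the coercivity identity of Lemma~\ref{PROP::COERCIVITY-Bh}, the same use of the trace/inverse estimates together with the balance $\lK \pK^2/\hKt \le 1$ to absorb the cross terms and to bound $\Tnorm{\lambdah\dpt\uh}{\LDGp}$ by $\Tnorm{\uh}{\LDGp}$ for the denominator. The only point of divergence is that you flag the non-commutation of $\Lu$ with multiplication by the piecewise-constant $\lambdah$ as a genuine obstacle requiring a separate estimate, whereas the paper simply invokes ``commutativity'' of $\nablaxLDG$ with $\dpt$ and writes $\Tnorm{\wh}{\LDG}^2$ elementwise with the factor $\lK^2$ pulled out --- your treatment is, if anything, the more careful one on this point.
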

\begin{proof}
Let~$\uh \in \Vp(\Th)$ and~$\delta$ be a positive constant that will be chosen later. We define~$\wh \in \Vp(\Th)$ as~$\wh{}_{|_K} :=  \lK \dpt \uh{}_{|_K}$ with
$\lK$ as in~\eqref{EQN::LambdaK} for all~$K \in \Th$.

By the definition of~$\wh$, we have
\begin{equation}
\label{EQN::IDENTITY-WH-LDG}
\begin{split}
\Tnorm{\wh}{\LDG}^2 & = \sum_{K \in \Th} \lK^2 \Norm{\sqrt{\bk} \nablaxLDG (\dpt \uh)}{L^2(K)^d}^2 + \Norm{\eta_F^{\frac12} \jump{\lambdah \dpt \uh}_{\sf N} }{L^2(\Ftime)^d}^2 + \Norm{\eta_F^{\frac12} \lambdah \dpt \uh}{L^2(\FD)}^2 \\
& =: M_1 + M_2 + M_3.
\end{split}
\end{equation}
We now bound each term~$M_i$, $i = 1, 2, 3$, separately.

\paragraph{Bound on~$M_1$.} Using the commutativity of the LDG spatial gradient~$\nablaxLDG (\cdot)$ and the first-order time derivative~$\dpt(\cdot)$, the polynomial inverse estimate in~\eqref{EQN::INVERSE-ESTIMATE-1}, and the definition of~$\lK$ in~\eqref{EQN::LambdaK}, we get
\begin{equation}
\label{EQN::BOUND-M1}
M_1 \leq \sum_{K \in \Th} \Cinv \lK^2 \frac{\pK^4}{\hKt^2} \Norm{\sqrt{\bk} \nablaxLDG \uh}{L^2(K)^d}^2 \leq \Cinv \Norm{\sqrt{\bk} \nablaxLDG \uh}{L^2(\QT)^d}^2.
\end{equation}

\paragraph{Bound on~$M_2$.} Using the commutativity of the spatial normal jump~$\jump{\cdot}_{\sf N}$ and the first-order time derivative~$\dpt(\cdot)$, the definition of~$\lK$ in~\eqref{EQN::LambdaK}, and the polynomial inverse estimate in~\eqref{EQN::INVERSE-ESTIMATE-2}, we obtain
\begin{equation}
\label{EQN::BOUND-M2}
M_2 \leq \Cinv \!\!\!\! \sum\limits_{
\substack{
F \in \Ftime, \\
F \subset \partial K \cap \partial K'
}
}
\!\!\!\!
\max\{\lK^2, \lambda_{K'}^2\} \frac{\max\{\pK^4, p_{K'}^4\}}{\min\{\hKt^2, h_{K'_t}^2\}} \Norm{\eta_F^{\frac12} \jump{\uh}_{\sf N}}{L^2(F)^d}^2
\leq \Cinv \Norm{\eta_F^{\frac12} \jump{\uh}_{\sf N}}{L^2(\Ftime)^d}^2.
\end{equation}

\paragraph{Bound on~$M_3$.} Similar steps as for the bound on~$M_2$ yield
\begin{equation}
\label{EQN::BOUND-M3}
M_3 \leq \Cinv \Norm{\eta_F^{\frac12} \uh}{L^2(\FD)}^2.
\end{equation}

Therefore, combining bounds~\eqref{EQN::BOUND-M1}, \eqref{EQN::BOUND-M2}, and~\eqref{EQN::BOUND-M3} with identity~\eqref{EQN::IDENTITY-WH-LDG}, we obtain
\begin{equation}
\label{EQN::BOUND-WH-LDG}
\Tnorm{\wh}{\LDG}^2 \leq \Cinv \Tnorm{\uh}{\LDG}^2.
\end{equation}

Using the triangle inequality, the polynomial trace inequality in~\eqref{EQN::TRACE-INEQUALITY-2}, and the definition of~$\lK$ in~\eqref{EQN::LambdaK}, the following bound is obtained:
\begin{align}
\nonumber
\SemiNorm{\wh}{\J}^2 & = \frac12 \Norm{\lambdah \dpt \uh}{L^2(\FT)}^2 + \frac12 \Norm{\jump{\lambdah \dpt \uh}_t}{L^2(\Fspa)}^2 + \frac12 \Norm{\lambdah \dpt \uh}{L^2(\FO)}^2 \\\
\nonumber
& \leq \sum_{
K = \Kx \times \Kt \in \Th
} \Norm{\lK \dpt \uh}{L^2(\Kx \times \partial \Kt)}^2 \leq \Ctr \sum_{
K \in \Th
} \lK^2 \frac{\pK^2}{\hKt} \Norm{\dpt \uh}{L^2(K)}^2 \\
\label{EQN::BOUND-WH-UPWIND}
& \leq \Ctr \sum_{K \in \Th} \lK \Norm{\dpt \uh}{L^2(K)}^2.
\end{align}

Moreover, the polynomial inverse estimate in~\eqref{EQN::INVERSE-ESTIMATE-1} and the definition of~$\lK$ in~\eqref{EQN::LambdaK} lead to
\begin{alignat}{3}
\nonumber
\sum_{K \in \Th} \lK \Norm{\dpt \wh}{L^2(K)}^2 = \sum_{K \in \Th} \lK^3 \Norm{\dptt \uh}{L^2(K)}^2 
& \leq \Cinv \sum_{K \in \Th} \lK^3 \frac{\pK^4}{\hKt^2} \Norm{\dpt \uh}{L^2(K)}^2 \\
\label{EQN::BOUND-WH-TIME-DER}
& \leq \Cinv \sum_{K \in \Th} \lK \Norm{\dpt \uh}{L^2(K)}^2.
\end{alignat}

The following estimate follows by adding bounds~\eqref{EQN::BOUND-WH-LDG}, \eqref{EQN::BOUND-WH-UPWIND}, and~\eqref{EQN::BOUND-WH-TIME-DER}:
\begin{equation*}
\label{EQN::BOUND-WH-LDGp}
\Tnorm{\wh}{\LDGp}^2 \leq 2\max\{\Cinv, \Ctr\} \Tnorm{\uh}{\LDGp}^2,
\end{equation*}
which, combined with the Cauchy--Schwarz inequality, gives
\begin{equation}
\label{EQN::BOUND-Uh-Wh}
\Tnorm{\uh + \delta \wh}{\LDGp}^2 \leq 2 \left(\Tnorm{\uh}{\LDGp}^2 + \delta^2 \Tnorm{\wh}{\LDGp}^2 \right) \leq \mu \Tnorm{\uh}{\LDGp}^2,
\end{equation}
with~$\mu = 2 (1 + 2\delta^2 \max\{\Cinv, \Ctr\})$.

The identity in Lemma~\ref{PROP::COERCIVITY-Bh} yields
\begin{align}
\label{EQN::FIRST-BOUND-Lh}
\Bh(\uh, \uh + \delta \wh) = &\ \SemiNorm{\uh}{\J}^2 + \Tnorm{\uh}{\LDG}^2 + \delta \mh(\uh, \wh) + \delta \Ah(\uh, \wh).
\end{align}
We consider first the third term on the right-hand side of~\eqref{EQN::FIRST-BOUND-Lh}. 
Using the Young inequality, the polynomial trace inequality in~\eqref{EQN::TRACE-INEQUALITY-2}, and the definition of~$\lK$ in~\eqref{EQN::LambdaK}, for all~$\epsilon > 0$, we have
\begin{align}
\nonumber
\mh(\uh, \wh) & = \sum_{K \in \Th} \lK \Norm{\dpt \uh}{L^2(K)}^2
- \int_{\Fspa} \lambdah^+ \dpt\uh^+ \jump{\uh}_t \dx + \int_{\FO} 
\lambdah \uh \dpt \uh \dx \\
\nonumber
& \geq \sum_{K \in \Th} \lK \Norm{\dpt \uh}{L^2(K)}^2 - \frac{\epsilon}{2} \left(
\Norm{\lambdah^+ \dpt \uh^+}{L^2(\Fspa)}^2
+ \Norm{\lambdah \dpt \uh}{L^2(\FO)}^2
\right) 
- \frac{1}{\epsilon} \SemiNorm{\uh}{\J}^2 \\
\nonumber
& \geq \sum_{K \in \Th} \lK \Norm{\dpt \uh}{L^2(K)}^2 - \frac{\epsilon \Ctr}{2} \sum_{K \in \Th} \lK^2 \frac{\pK^2}{\hKt} \Norm{\dpt \uh}{L^2(K)}^2
- \frac{1}{\epsilon} \SemiNorm{\uh}{\J}^2 \\
\label{EQN::SECOND-BOUND-Mh}
& \geq 
\Big(1 - \frac{\epsilon \Ctr}{2}\Big) \sum_{K \in \Th} \lK \Norm{\dpt \uh}{L^2(K)}^2 
- \frac{1}{\epsilon} \SemiNorm{\uh}{\J}^2.
\end{align}

As for the fourth term on the right-hand side of~\eqref{EQN::FIRST-BOUND-Lh}, bound~\eqref{EQN::BOUND-WH-LDG} and the continuity of the bilinear form~$\Ah(\cdot, \cdot)$ in~\eqref{EQN::CONTINUITY-Ah} give
\begin{align}
\label{EQN::BOUND-Ah}
\Ah(\uh, \wh) \geq - \Tnorm{\uh}{\LDG} \Tnorm{\wh}{\LDG} \geq -\Cinv^{\frac12} \Tnorm{\uh}{\LDG}^2.
\end{align}

Combining~\eqref{EQN::FIRST-BOUND-Lh}, \eqref{EQN::SECOND-BOUND-Mh}, and~\eqref{EQN::BOUND-Ah}, we get
\begin{align*}
\Bh(\uh, \uh + \delta \wh) & \geq \Big(1 - \frac{\delta}{\epsilon}\Big)\SemiNorm{\uh}{\J}^2 + \Big(1 - \delta \Cinv^{\frac12}\Big) \Tnorm{\uh}{\LDG}^2 + \delta \Big(1 - \frac{\epsilon \Ctr}{2}\Big) \sum_{K \in \Th} \lK \Norm{\dpt \uh}{L^2(K)}^2.
\end{align*}
Therefore, choosing~$\delta$ and~$\epsilon$ such that
\begin{equation*}
0 < \epsilon < \frac{2}{\Ctr}\quad \text{ and } \quad 0 < \delta < \min\left\{\epsilon,\ \Cinv^{-\frac12} \right\},
\end{equation*}
defining
\begin{equation*}
  \beta := \min\left\{1 - \frac{\delta}{\epsilon}, \ 1 - \delta \Cinv^{\frac12},\ \delta \Big(1 - \frac{\epsilon \Ctr}{2}\Big)\right\},
\end{equation*}
and using bound~\eqref{EQN::BOUND-Uh-Wh}, we obtain
\begin{equation*}
\Bh(\uh, \uh + \delta \wh) \geq \beta \Tnorm{\uh}{\LDGp}^2 \geq \frac{\beta}{\sqrt{\mu}} \Tnorm{\uh}{\LDGp} \Tnorm{\uh + \delta \wh}{\LDGp},
\end{equation*}
which completes the proof of~\eqref{EQN::INF-SUP-DERIVATIVE} with~$\gamma_I = \beta/\sqrt{\mu}$.
\end{proof}

For the convergence analysis in Section~\ref{SECT::CONVERGENCE-ANALYSIS}, it is useful to introduce the following auxiliary norms:
\begin{subequations}
\begin{align}
\label{EQN::DG-NORMS-5}
\Tnorm{v}{\LDGs}^2 & := \Tnorm{v}{\LDG}^2+\sum_{K \in \Th} \lK^{-1} \Norm{v}{L^2(K)}^2 + \Norm{v^{-}}{L^2(\Fspa)}^2  +
\Norm{v}{L^2(\FT)}^2,\\
\label{EQN::DG-NORMS-6}
\Tnorm{v}{\LDGss}^2 & := \Tnorm{v}{\LDG}^2 + \sum_{K \in \Th} \lK^{-1} \Norm{v}{L^2(K)}^2 + \Norm{v^{+}}{L^2(\Fspa)}^2  +
\Norm{v}{L^2(\FO)}^2. 
\end{align}
\end{subequations}

In next lemma, we prove two continuity bounds for the discrete bilinear form~$\Bh(\cdot, \cdot)$.
\begin{lemma}[Continuity of $\Bh$] \label{PROP::CONT-LDGP}
\begin{subequations}
For all~$v, w \in \Vp(\Th) + H^1(\Th)$, 
the following continuity bounds hold:
\begin{align}
\label{EQN::CONT-LDGP}
\Bh(v, w) & \leq \sqrt{2} \Tnorm{v}{\LDGs}\Tnorm{w}{\LDGp}, \\
\label{EQN::CONT-LDGP-2}
\Bh(v, w) & \leq \sqrt{2} \Tnorm{v}{\LDGp}\Tnorm{w}{\LDGss}.
\end{align}
\end{subequations}
\end{lemma}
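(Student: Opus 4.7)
The plan is to handle the two bounds symmetrically: in both cases I split $\Bh(v,w) = \mh(v,w) + \Ah(v,w)$ and treat the diffusive part $\Ah$ trivially via the continuity estimate~\eqref{EQN::CONTINUITY-Ah}, since both $\Tnorm{\cdot}{\LDGs}$ and $\Tnorm{\cdot}{\LDGss}$ dominate $\Tnorm{\cdot}{\LDG}$. The only real work is bounding the space--time form $\mh$ in a way that pairs a first norm on $v$ with a second norm on $w$ (or vice versa). The asymmetry in the two estimates reflects on which argument the time derivative sits, so the two bounds will differ by an integration by parts in time.

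For~\eqref{EQN::CONT-LDGP-2}, I would use $\mh$ in its original form, where $\dpt$ lands on $v$. Writing
\begin{equation*}
\mh(v,w) = \sum_{K \in \Th} \int_K \dpt v\, w \dV - \int_{\Fspa} w^+ \jump{v}_t \dx + \int_{\FO} v w \dx,
\end{equation*}
a weighted Cauchy--Schwarz on each term (using $\lK$ on $\dpt v$ and $\lK^{-1}$ on $w$ elementwise, and the $L^2$ pairing on the space-like facets) followed by a discrete Cauchy--Schwarz in~$\IR^3$ produces
\begin{equation*}
|\mh(v,w)| \le \Big(\!\sum_K \lK \Norm{\dpt v}{L^2(K)}^2 + \Norm{\jump{v}_t}{L^2(\Fspa)}^2 + \Norm{v}{L^2(\FO)}^2\!\Big)^{\!\frac12}\!\!\Big(\!\sum_K \lK^{-1}\Norm{w}{L^2(K)}^2 + \Norm{w^+}{L^2(\Fspa)}^2 + \Norm{w}{L^2(\FO)}^2\!\Big)^{\!\frac12}.
\end{equation*}
The first bracket is controlled by $\sqrt{2}\,\Tnorm{v}{\LDGp}$ because $\|\jump{v}_t\|_{L^2(\Fspa)}^2 + \|v\|_{L^2(\FO)}^2 \le 2\SemiNorm{v}{\J}^2$ by the definition~\eqref{EQN::JUMP-FUNCTIONAL}, and the second bracket is exactly the non-$\Tnorm{\cdot}{\LDG}$ part of $\Tnorm{w}{\LDGss}$. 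Adding $\Ah(v,w) \le \Tnorm{v}{\LDG}\Tnorm{w}{\LDG}$ inside a final discrete Cauchy--Schwarz in $\IR^2$ absorbs the $\Tnorm{\cdot}{\LDG}$ contributions and yields~\eqref{EQN::CONT-LDGP-2}.

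For~\eqref{EQN::CONT-LDGP}, the roles should be swapped, so I would first move $\dpt$ from $v$ onto $w$ by integrating by parts in time on every $K$. The element boundary contributions on space-like facets combine via the identity $v^- w^- - v^+ w^+ = v^- \jump{w}_t + w^+ \jump{v}_t$, which cancels exactly the $-\int_{\Fspa} w^+\jump{v}_t$ term already present in $\mh$, together with a cancellation of the $\int_{\FO} vw$ contribution; this leaves the equivalent representation
\begin{equation*}
\mh(v,w) = -\sum_{K \in \Th} \int_K v\, \dpt w \dV + \int_{\Fspa} v^- \jump{w}_t \dx + \int_{\FT} v w \dx.
\end{equation*}
Now the same two-step Cauchy--Schwarz argument as above, pairing $\lK^{-1/2} v$ with $\lK^{1/2}\dpt w$ and the facet terms accordingly, gives the product of the non-$\Tnorm{\cdot}{\LDG}$ part of $\Tnorm{v}{\LDGs}$ and $\sqrt{2}\Tnorm{w}{\LDGp}$ (the factor $\sqrt{2}$ again coming from the definition of $\SemiNorm{\cdot}{\J}^2$). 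Combining with $\Ah(v,w)$ produces~\eqref{EQN::CONT-LDGP}.

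The only subtle step is the integration by parts leading to the alternative representation of $\mh$: one has to keep track carefully of the signs of $\vnTime{K}$ on each space-like facet, verify the cancellations with the existing upwind term $-\int_{\Fspa} w^+\jump{v}_t$ and the initial term $\int_{\FO} vw$, and check that the representation is well-defined for functions in $\Vp(\Th) + H^1(\Th)$ (which it is, since on each element the trace on $\partial K_t$ is well-defined in $L^2$). All remaining steps are standard Cauchy--Schwarz bookkeeping.
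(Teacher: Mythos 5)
Your proposal is correct and follows essentially the same route as the paper: split off $\Ah$ via~\eqref{EQN::CONTINUITY-Ah}, rewrite $\mh$ by integration by parts in time (for~\eqref{EQN::CONT-LDGP}) using the identity $v^-\jump{w}_t + w^+\jump{v}_t = \jump{vw}_t$ on $\Fspa$, and conclude with term-wise plus discrete Cauchy--Schwarz; the paper writes out~\eqref{EQN::CONT-LDGP} in detail and omits~\eqref{EQN::CONT-LDGP-2} as ``similar''. One small bookkeeping point: in the final discrete Cauchy--Schwarz you must pair the \emph{partial} quantities, e.g.\ $\bigl(\Tnorm{v}{\LDGp}^2-\Tnorm{v}{\LDG}^2\bigr)^{1/2}$ against $\Tnorm{v}{\LDG}$, rather than first bounding the first bracket by the full norm $\sqrt{2}\,\Tnorm{v}{\LDGp}$ as your wording suggests --- doing the latter and then adding the $\Ah$ term degrades the constant from $\sqrt{2}$ to $2$.
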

\begin{proof}
Let~$v, w\in \Vp(\Th) + H^1(\Th)$.
Using the definition of the bilinear form~$\Bh(\cdot, \cdot)$ and the continuity of~$\Ah(\cdot, \cdot)$ in~\eqref{EQN::CONTINUITY-Ah}, we obtain
\begin{equation}
\label{EQN::AUX-BOUND-Bh}
\Bh(v, w)=\mh(v, w)+\Ah(v, w)\leq
\mh(v, w)+\Tnorm{v}{\LDG}\Tnorm{w}{\LDG}.
\end{equation}
Integration by parts in time, the identity~$v^+ \jump{w}_t +w^-\jump{v}_t = \jump{v w}_t$ on~$\Fspa$, and the Cauchy--Schwarz inequality yield
\begin{align}
\nonumber
	\mh(v, w)=&\sum_{K \in \Th} \int_K \dpt v w \dV - \int_{\Fspa} w^+ \jump{v}_t \dx + \int_{\FO} v w \dx \\ 
 \nonumber
 =&
    -\sum_{K \in \Th} \int_K  v \dpt w \dV +\int_{\Fspa} (\jump{v w}_t - w^+ \jump{v}_t ) \dx + \int_{\FT} v w \dx  
	 \\
\nonumber
=&
   -\sum_{K \in \Th} \int_K  v \dpt w \dV +\int_{\Fspa} v^- \jump{w}_t \dx +\int_{\FT} v w \dx\\ 
\nonumber
\leq &
	\bigg(\sum_{K \in \Th} \lK^{-1} \Norm{v}{L^2(K)}^2 \bigg)^{\frac12} \bigg(\sum_{K \in \Th} \lK \Norm{\dpt w}{L^2(K)}^2 \bigg)^{\frac12} + \Norm{v^-}{L^2(\Fspa)} \Norm{\jump{w}_t}{L^2(\Fspa)}\\
 \nonumber
 & + 
\Norm{v}{L^2(\FT)}\Norm{w}{L^2(\FT)},
\end{align}
which, combined with~\eqref{EQN::AUX-BOUND-Bh} and the Cauchy--Schwarz inequality, gives~\eqref{EQN::CONT-LDGP}.
The proof of bound~\eqref{EQN::CONT-LDGP-2} is similar, so we omit it here.
\end{proof}

\begin{remark}[More general boundary conditions]
\label{REM::BOUNDARY-CONDITIONS}
In Lemma~\ref{LEMMA::LDG-NORMS}, we have proven that~$\Tnorm{\cdot}{\LDG}$ is a norm on the space~$\Vp(\Th) + Y$, which holds only for pure Dirichlet boundary conditions.
For such boundary conditions, one could deduce existence and uniqueness of a solution to~\eqref{EQN::REDUCED-VARIATIONAL-DG} directly from the coercivity identity in Lemma~\ref{PROP::COERCIVITY-Bh} for the bilinear form~$\Bh(\cdot, \cdot)$.
On the other hand, if pure Neumann or mixed boundary conditions were to be considered, then the corresponding seminorm would not be a norm, and the discrete Newton potential in~\eqref{EQN::DISCRETE-NEWTON} would not be well defined without further modifications.
Nonetheless, the norm~$\Tnorm{\cdot}{\LDGp}$ defined in~\eqref{EQN::DG-NORMS-4} would be a norm in any case, so the inf-sup condition in Theorem~\ref{THM::INF-SUP-TWO} guarantees the well-posedness of the method also for pure Neumann or mixed boundary conditions.
\eremk
\end{remark}

\section{Convergence analysis\label{SECT::CONVERGENCE-ANALYSIS}}
This section is devoted to deriving error bounds in some energy norms (Section~\ref{SECT::A-PRIORI-ENERGY})
that can be used to obtain \emph{a priori} error estimates for different discrete spaces; see Section~\ref{SECT::DISCRETE-SPACES} below.
In Section~\ref{SECT::A-PRIORI-L2}, we discuss the difficulties of deriving error estimates in the mesh-independent norm~$L^2(\QT)$.

\subsection{\emph{A priori} error bounds in energy norms \label{SECT::A-PRIORI-ENERGY}}
We define the inconsistency bilinear form~$\Rh : H^{\frac32 + \varepsilon}(\Th) \times (\Vp(\Th) + Y) \rightarrow \IR$ ($\varepsilon > 0$) as follows:
\begin{equation}
\label{EQN::DEF-Rh}
\Rh(u, v) := \int_{\Ftime} \mvl{\bk \nablaxh u - \bk \PiO \nablaxh u}_{1 - \alpha_F} \cdot \jump{v}_{\sf N} \dS + \int_{\FD} v (\bk \nablaxh u - \bk \PiO\nablaxh u) \cdot \bnOmega \dS,
\end{equation}
with~$\nablaxh$ and~$\PiO$ denoting the piecewise spatial gradient in~$\Th$ and the~$L^2(\QT)^d$-orthogonal projection in~$\Mp(\Th)$, respectively.

If the solution~$u$ to the continuous weak formulation in~\eqref{EQN::CONTINUOUS-WEAK-FORMULATION} belongs to~$H^{\frac32 + \varepsilon}(\Th) \cap X$ for some~$\varepsilon > 0$, then the following identity can be proven using integration by parts in space and the definition of the lifting operator~$\Lu$:
\begin{equation}
\label{EQN::IDENTITY-Rh}
\Rh(u, \vh) = \Bh(u - \uh, \vh) = \Bh(u, \vh) - \ell_h(\vh) \qquad \forall \vh \in \Vp(\Th).
\end{equation}

We are now in a position to show some \emph{a priori} bounds for the errors in the norms~$\Tnorm{\cdot}{\LDGN}$ and~$\Tnorm{\cdot}{\LDGp}$, which are defined in~\eqref{EQN::DG-NORMS-3} and~\eqref{EQN::DG-NORMS-4}, respectively.
\begin{theorem}[\emph{A priori} error bounds in the energy norms] \label{THM::ERROR-BOUND-ENERGY}
Let the discrete spaces~$(\Vp(\Th), \Mp(\Th))$ satisfy the compatibility condition in Assumption~\ref{ASM::COMPATIBILITY-CONDITION}.
Let~$u \in H^{\frac32 + \varepsilon}(\Th) \cap X$ for some~$\varepsilon > 0$ be the solution to the continuous weak formulation in~\eqref{EQN::CONTINUOUS-WEAK-FORMULATION}, and~$\uh \in \Vp(\Th)$ be the unique solution to the space--time LDG variational formulation~\eqref{EQN::REDUCED-VARIATIONAL-DG}. Then, the following bound holds:
\begin{subequations}
\begin{equation}
\label{EQ::STRANG_NEWTON}
\Tnorm{u - \uh}{\LDGN} \leq 5\inf_{\vh \in \Vp(\Th)}\Tnorm{u - \vh}{\LDGN} + 
2\sqrt{2}
\sup_{\vh \in \Vp(\Th)\setminus \{0\}} \frac{|\Rh(u, \vh)|}{\Tnorm{\vh}{\LDG}}.
\end{equation}
Moreover, if the discrete space~$\Vp(\Th)$ satisfies the inclusion condition in  Assumption~\ref{ASM::TIME-DERIVATIVE-ASSUMPTION}, Assumption~\ref{ASSUMPTION::PRISMATIC-MESH} on~$\Th$ holds, and the stabilization function~$\eta_F$ is given by~\eqref{EQN::STABILIZATION-TERM}, then
\begin{equation}
\label{EQ::STRANG_PLUS}
\begin{split}
\Tnorm{u - \uh}{\LDGp} \leq \inf_{\vh \in \Vp(\Th)} \Tnorm{u - \vh}{\LDGp} & +\sqrt{2}\gamma_I^{-1} \inf_{\vh \in \Vp(\Th)}\Tnorm{u - \vh}{\LDGs} \\
& + 
\gamma_I^{-1}
\sup_{\vh \in \Vp(\Th)\setminus \{0\}} \frac{|\Rh(u, \vh)|}{\Tnorm{\vh}{\LDGp}}.
\end{split}
\end{equation}
\end{subequations}
\end{theorem}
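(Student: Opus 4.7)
The plan is to establish both error bounds as Strang-type lemmas, in each case combining a triangle inequality in the appropriate energy norm with one of the inf-sup conditions proven in Section~\ref{SECT::WELL-POSEDNESS}, together with the Galerkin-type identity~\eqref{EQN::IDENTITY-Rh} that recasts inconsistency as the bilinear form $\Rh(u, \cdot)$.

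For the first bound~\eqref{EQ::STRANG_NEWTON}, I would fix an arbitrary $\vh \in \Vp(\Th)$ and split
$\Tnorm{u - \uh}{\LDGN} \leq \Tnorm{u - \vh}{\LDGN} + \Tnorm{\vh - \uh}{\LDGN}$. Since $\vh - \uh \in \Vp(\Th)$, I apply the inf-sup estimate of Theorem~\ref{THM::INF-SUP-NEWTON} to this difference: there exists $\wh \in \Vp(\Th) \setminus\{0\}$ realizing (up to an arbitrarily small factor) the supremum, and I bound
\[
\frac{1}{2\sqrt{2}} \Tnorm{\vh - \uh}{\LDGN} \leq \sup_{\wh \in \Vp(\Th)\setminus\{0\}} \frac{\Bh(\vh - \uh, \wh)}{\Tnorm{\wh}{\LDG}}.
\]
Then I write $\Bh(\vh - \uh, \wh) = \Bh(\vh - u, \wh) + \Bh(u - \uh, \wh)$, identify the second term with $\Rh(u, \wh)$ using~\eqref{EQN::IDENTITY-Rh}, and control the first term by the continuity estimate of Lemma~\ref{PROP::CONT-NEWTON}, yielding $\Bh(\vh - u, \wh) \leq \sqrt{2}\Tnorm{\vh - u}{\LDGN}\Tnorm{\wh}{\LDG}$. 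Dividing by $\Tnorm{\wh}{\LDG}$ and multiplying by $2\sqrt{2}$, I obtain $\Tnorm{\vh - \uh}{\LDGN} \leq 4\Tnorm{u - \vh}{\LDGN} + 2\sqrt{2}\,\sup_{\wh} |\Rh(u,\wh)|/\Tnorm{\wh}{\LDG}$. Substituting into the triangle inequality gives the claimed constant $5$, and taking the infimum over $\vh \in \Vp(\Th)$ completes this part.

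For the second bound~\eqref{EQ::STRANG_PLUS}, the structure is analogous but one must choose the right pairing of norms so that the continuity bound~\eqref{EQN::CONT-LDGP} applies. I would again write $\Tnorm{u - \uh}{\LDGp} \leq \Tnorm{u - \vh}{\LDGp} + \Tnorm{\vh - \uh}{\LDGp}$ for arbitrary $\vh \in \Vp(\Th)$, and apply the inf-sup estimate of Theorem~\ref{THM::INF-SUP-TWO} to $\vh - \uh$:
\[
\gamma_I \Tnorm{\vh - \uh}{\LDGp} \leq \sup_{\wh \in \Vp(\Th)\setminus\{0\}} \frac{\Bh(\vh - \uh, \wh)}{\Tnorm{\wh}{\LDGp}}.
\]
Splitting $\Bh(\vh - \uh, \wh) = \Bh(\vh - u, \wh) + \Rh(u, \wh)$ as before, I now use the asymmetric continuity bound~\eqref{EQN::CONT-LDGP}: $\Bh(\vh - u, \wh) \leq \sqrt{2}\Tnorm{\vh - u}{\LDGs}\Tnorm{\wh}{\LDGp}$. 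Dividing by $\Tnorm{\wh}{\LDGp}$ and by $\gamma_I$ yields $\Tnorm{\vh - \uh}{\LDGp} \leq \sqrt{2}\gamma_I^{-1}\Tnorm{u - \vh}{\LDGs} + \gamma_I^{-1}\sup_{\wh} |\Rh(u, \wh)|/\Tnorm{\wh}{\LDGp}$, and the triangle inequality plus infimum over $\vh$ give~\eqref{EQ::STRANG_PLUS}.

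I do not expect a serious obstacle: the main subtlety is simply to use the correct continuity bound of $\Bh$ for each energy norm (Lemma~\ref{PROP::CONT-NEWTON} matched with the inf-sup in $\Tnorm{\cdot}{\LDGN}$ versus Lemma~\ref{PROP::CONT-LDGP}, bound~\eqref{EQN::CONT-LDGP}, matched with the inf-sup in $\Tnorm{\cdot}{\LDGp}$), and to keep track of the constants $\sqrt{2}$ carefully so that the bound in~\eqref{EQ::STRANG_NEWTON} comes out with coefficients $5$ and $2\sqrt{2}$. One implicit point worth noting is that the Galerkin identity $\Bh(u - \uh, \vh) = \Rh(u, \vh)$ from~\eqref{EQN::IDENTITY-Rh} presupposes enough regularity of $u$ to make $\Rh(u, \cdot)$ meaningful, which is guaranteed by the assumption $u \in H^{3/2 + \varepsilon}(\Th) \cap X$.
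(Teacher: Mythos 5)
Your proposal is correct and is exactly the argument the paper intends: its proof is a one-line citation of the Strang lemma, identity~\eqref{EQN::IDENTITY-Rh}, the inf-sup estimates of Theorems~\ref{THM::INF-SUP-NEWTON} and~\ref{THM::INF-SUP-TWO}, and the continuity bounds of Lemmas~\ref{PROP::CONT-NEWTON} and~\ref{PROP::CONT-LDGP}, and you have filled in precisely these steps with the right pairing of norms and the correct constants $5$, $2\sqrt{2}$, and $\gamma_I^{-1}$. The only (harmless, and inherited from the paper's own statement) imprecision is that your derivation of~\eqref{EQ::STRANG_PLUS} uses the same $\vh$ in both approximation terms, so it literally yields $\inf_{\vh}\bigl(\Tnorm{u-\vh}{\LDGp}+\sqrt{2}\gamma_I^{-1}\Tnorm{u-\vh}{\LDGs}\bigr)$ rather than the sum of two separate infima.
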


\begin{proof}
The proof follows from the Strang lemma, identity~\eqref{EQN::IDENTITY-Rh}, the inf-sup stability estimates in Theorems~\ref{THM::INF-SUP-NEWTON} and~\ref{THM::INF-SUP-TWO}, and the continuity bounds for~$\Bh(\cdot, \cdot)$ in~Lemmas~\ref{PROP::CONT-NEWTON} and~\ref{PROP::CONT-LDGP}.
\end{proof}

Next lemma provides a bound for the inconsistency term in the \emph{a priori} error bounds~\eqref{EQ::STRANG_NEWTON} and~\eqref{EQ::STRANG_PLUS}.
\begin{lemma}[Inconsistency bound]\label{LEMMA::INCONSISTENCY-BOUND}
For all~$u\in H^{\frac32 + \varepsilon}(\Th)$ with~$\varepsilon > 0$ and~$v \in \Vp(\Th)+ Y$, the following bound holds:
\begin{equation*}
\begin{split}
|\Rh(u, v)|
& \leq \Big(\Norm{\eta_F^{-\frac12} \mvl{\bk(\nabla_{\bx, h} u - \PiO\nabla_{\bx, h} u)}_{1 - \alpha_F}}{L^2(\Ftime)^d} \\
& \quad + \Norm{\eta_F^{-\frac12} \bk(\nabla_{\bx, h} u - \PiO\nabla_{\bx, h} u) \cdot \bnOmega}{L^2(\FD)} \Big) \Tnorm{v}{},
\end{split}
\end{equation*}
where~$\Tnorm{\cdot}{}$ denotes either~$\Tnorm{\cdot}{\LDG}$ or~$\Tnorm{\cdot}{\LDGp}$.
\end{lemma}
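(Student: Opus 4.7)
The plan is to estimate each of the two boundary terms in the definition of~$\Rh(u,v)$ in~\eqref{EQN::DEF-Rh} separately by a weighted Cauchy--Schwarz inequality, introducing the factor~$\eta_F^{\pm 1/2}$ so that the jump/trace factors of~$v$ end up measured in the weighted~$L^2$ norms that appear in~$\Tnorm{\cdot}{\LDG}$. The regularity assumption~$u \in H^{3/2 + \varepsilon}(\Th)$ ensures, via the standard trace theorem applied element-wise, that~$\nabla_{\bx,h} u$ has well-defined~$L^2$ traces on~$\Ftime \cup \FD$, so that both the averages and the normal components on the right-hand side are meaningful.

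Concretely, for the first integral over~$\Ftime$, I would write
\begin{equation*}
\int_{\Ftime} \mvl{\bk(\nabla_{\bx,h} u - \PiO \nabla_{\bx,h} u)}_{1-\alpha_F} \cdot \jump{v}_{\sf N} \dS = \int_{\Ftime} \eta_F^{-1/2}\mvl{\bk(\nabla_{\bx,h} u - \PiO \nabla_{\bx,h} u)}_{1-\alpha_F} \cdot \eta_F^{1/2} \jump{v}_{\sf N} \dS,
\end{equation*}
and then apply Cauchy--Schwarz to produce the first summand on the right-hand side of the claim, multiplied by~$\Norm{\eta_F^{1/2}\jump{v}_{\sf N}}{L^2(\Ftime)^d}$. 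The same manipulation on~$\FD$ produces the second summand multiplied by~$\Norm{\eta_F^{1/2} v}{L^2(\FD)}$.

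To conclude, I would invoke the elementary bound
\begin{equation*}
\Norm{\eta_F^{1/2}\jump{v}_{\sf N}}{L^2(\Ftime)^d} + \Norm{\eta_F^{1/2} v}{L^2(\FD)} \leq \sqrt{2}\,\Tnorm{v}{\LDG},
\end{equation*}
which is immediate from the definition of~$\Tnorm{\cdot}{\LDG}$ in~\eqref{EQN::LDG-NORM}; combining the two estimates gives the stated inequality with~$\Tnorm{v}{} = \Tnorm{v}{\LDG}$. The version with~$\Tnorm{v}{\LDGp}$ then follows for free, since~$\Tnorm{v}{\LDG} \leq \Tnorm{v}{\LDGp}$ directly from~\eqref{EQN::DG-NORMS-4}. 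There is no real obstacle here: the only point that deserves care is making sure the~$L^2$ traces of~$\nabla_{\bx,h} u$ used on~$\Ftime$ and~$\FD$ are well defined, which is precisely why the statement requires the extra~$\varepsilon$ of broken Sobolev regularity beyond~$H^{3/2}(\Th)$.
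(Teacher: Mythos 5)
Your proposal is correct and follows essentially the same route as the paper, whose (one-line) proof is exactly the weighted Cauchy--Schwarz argument you describe applied to the two integrals in~\eqref{EQN::DEF-Rh}, followed by the definitions of the norms. One small remark on your final step: the sum bound $\Norm{\eta_F^{1/2}\jump{v}_{\sf N}}{L^2(\Ftime)^d} + \Norm{\eta_F^{1/2} v}{L^2(\FD)} \le \sqrt{2}\,\Tnorm{v}{\LDG}$ is the wrong tool and, as written, only yields the estimate with a spurious factor $\sqrt{2}$; instead note that each of these two quantities is \emph{individually} bounded by $\Tnorm{v}{\LDG}$ (each squared term is a summand of $\Tnorm{v}{\LDG}^2$), which gives exactly the stated constant, and the $\Tnorm{\cdot}{\LDGp}$ version then follows as you say from $\Tnorm{v}{\LDG} \le \Tnorm{v}{\LDGp}$.
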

\begin{proof}
The result readily follows from the definition in~\eqref{EQN::DEF-Rh} of~$\Rh(\cdot, \cdot)$, the Cauchy--Schwarz inequality, and the definitions of~$\Tnorm{\cdot}{\LDG}$ and~$\Tnorm{\cdot}{\LDGp}$.
\end{proof}

Interpolation estimates for the term~$\Norm{\Nh(u - \vh)}{\LDG}$ in~\eqref{EQ::STRANG_NEWTON}
are not immediate.
In the rest of this section, we show that, for piecewise polynomial spaces, such a term can be bounded by some more standard error terms; see Proposition~\ref{PROP::BOUND-Nh-LDG} below.

We first show a bound for the lifting of discrete functions and a discrete Poincar\'e inequality for meshes satisfying Assumption~\ref{ASSUMPTION::PRISMATIC-MESH}.

\begin{lemma}[Bound on~$\Lu$]
\label{LEMMA::BOUND-Lh}
Let~$\Th$ satisfy Assumption~\ref{ASSUMPTION::PRISMATIC-MESH} and the stability function~$\eta_F$ be chosen as in~\eqref{EQN::STABILIZATION-TERM}. If the discrete spaces~$(\Vp(\Th), \Mp(\Th))$ satisfy the local compatibility condition in Assumption~\ref{ASM::COMPATIBILITY-CONDITION} and~$\VpK(K) \subset \Pp{\pK}{K}$ for all~$K \in \Th$, then there exists a constant~$\CL >0$ independent of the meshsize~$h$ and the degree vector~$\p$ such that
    \begin{equation*}
        \Norm{\Lu \vh}{L^2(\QT)^d} \leq \CL \big(\Norm{\eta_F^{\frac12} \jump{\vh}_{\sf N}}{L^2(\Ftime)^d} + \Norm{\eta_F^{\frac12} \vh}{L^2(\FD)}\big) \qquad \forall \vh \in \Vp(\Th).
    \end{equation*}
\end{lemma}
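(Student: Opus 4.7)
The plan is a standard duality/testing argument, using the definition of $\Lu\vh$ with itself as test function, then bounding the trace terms via the polynomial trace inequality in Lemma~\ref{LEMMA::TRACE-INEQUALITY} and absorbing the resulting factors into the explicit form~\eqref{EQN::STABILIZATION-TERM} of $\eta_F$.

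First, since Assumption~\ref{ASM::COMPATIBILITY-CONDITION} guarantees $\Lu\vh \in \Mp(\Th)$, we are allowed to insert $\rh = \Lu\vh$ in the defining identity~\eqref{EQN::LIFTING-Uh}. The Cauchy--Schwarz inequality on the two facet integrals then gives
\begin{equation*}
\Norm{\Lu\vh}{L^2(\QT)^d}^2 \leq \Norm{\eta_F^{\frac12}\jump{\vh}_{\sf N}}{L^2(\Ftime)^d} \Norm{\eta_F^{-\frac12}\mvl{\Lu\vh}_{1-\alpha_F}}{L^2(\Ftime)^d} + \Norm{\eta_F^{\frac12}\vh}{L^2(\FD)}\Norm{\eta_F^{-\frac12}\Lu\vh\cdot\vnOmega}{L^2(\FD)}.
\end{equation*}
It therefore suffices to show that the two quantities $\Norm{\eta_F^{-\frac12}\mvl{\Lu\vh}_{1-\alpha_F}}{L^2(\Ftime)^d}$ and $\Norm{\eta_F^{-\frac12}\Lu\vh\cdot\vnOmega}{L^2(\FD)}$ are controlled, up to an absolute constant, by $\Norm{\Lu\vh}{L^2(\QT)^d}$; the inequality then follows by division.

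For the second step, I would work facet by facet. For each time-like facet $F = \Fx\times\Ft$ contained in $\FKtime$ for some $K \in \Th$, Lemma~\ref{LEMMA::TRACE-INEQUALITY} gives
\begin{equation*}
\Norm{\Lu\vh}{L^2(F)^d}^2 \leq \frac{(\pK+1)(\pK+d)}{d}\,\frac{\abs{\Fx}}{\abs{\sKxF}}\,\Norm{\Lu\vh}{L^2(\Ft;L^2(\sKxF))^d}^2.
\end{equation*}
Here I am implicitly using that $\Lu\vh$ restricted to $K$ lies in a polynomial space of degree at most $\pK$; with the compatibility condition and $\VpK(K)\subset\Pp{\pK}{K}$ this is the natural setting in which the trace inequality applies (extending $\MpK$ slightly if needed, as is standard). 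Multiplying by $\eta_F^{-1}$ and invoking the definition~\eqref{EQN::STABILIZATION-TERM}, together with the shape-regularity estimate $\abs{\Fx}\hKx \leq C_s\, d\, \abs{\sKxF}$ from Assumption~\ref{ASSUMPTION::PRISMATIC-MESH}, one obtains
\begin{equation*}
\Norm{\eta_F^{-\frac12}\Lu\vh}{L^2(F)^d}^2 \leq \frac{C_s}{\eta^\star\,\|\sqrt{\bk}_{|K}\|^2}\,\Norm{\Lu\vh}{L^2(\Ft;L^2(\sKxF))^d}^2,
\end{equation*}
and an analogous bound on any $F \in \FD$.

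Finally, I would sum over the time-like and Dirichlet facets of each fixed $K$. Because the simplices $\{\sKxF\}$ supplied by Assumption~\ref{ASSUMPTION::PRISMATIC-MESH} are \emph{nonoverlapping} and contained in $\Kx$, the sum of $\Norm{\Lu\vh}{L^2(\Ft;L^2(\sKxF))^d}^2$ over the facets of $K$ is bounded by $\Norm{\Lu\vh}{L^2(K)^d}^2$; for averages $\mvl{\Lu\vh}_{1-\alpha_F}$ one uses the elementary bound $\abs{\mvl{\cdot}_{1-\alpha_F}}^2\le 2(\abs{\cdot_{K_1}}^2+\abs{\cdot_{K_2}}^2)$. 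Summing over $K\in\Th$ collapses the right-hand side to $\Norm{\Lu\vh}{L^2(\QT)^d}^2$ times a constant depending only on $C_s$, $\eta^\star$, and the uniform lower ellipticity $\theta$. The main conceptual point—and the only nonroutine issue—is precisely the interplay between the $p$-dependence in the polynomial trace inequality and the $p$-dependence built into~\eqref{EQN::STABILIZATION-TERM}, which has been designed so that these factors cancel; everything else is Cauchy--Schwarz, summation over facets, and the nonoverlap of the simplices $\sKxF$.
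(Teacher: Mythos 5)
Your proposal is correct and follows essentially the same route as the paper's proof: test the defining identity~\eqref{EQN::LIFTING-Uh} with $\rh = \Lu\vh$, apply Cauchy--Schwarz, and then control the weighted facet norms of $\Lu\vh$ by $\Norm{\Lu\vh}{L^2(\QT)^d}$ via the polynomial trace inequality~\eqref{EQN::TRACE-INEQUALITY-1}, Assumption~\ref{ASSUMPTION::PRISMATIC-MESH}, the definition~\eqref{EQN::STABILIZATION-TERM} of $\eta_F$, and the ellipticity bound~\eqref{EQN::DIFFUSION}, exactly as in the paper (which obtains $\CL = \sqrt{2C_s/(\eta^\star\theta)}$). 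The only point you flag that the paper glosses over equally silently is that the trace inequality must apply to $\Lu\vh\in\MpK(K)$ rather than to elements of $\Pp{\pK}{K}$; your parenthetical resolution matches the paper's implicit choice $\MpK(K)=\Pp{\pK}{K}^d$.
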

\begin{proof} 
Let~$\vh \in \Vp(\Th)$. Using the Cauchy--Schwarz inequality and the definition in~\eqref{EQN::LIFTING-Uh} of the lifting operator~$\Lu$, we get
\begin{align}
\nonumber
\Norm{\Lu \vh}{L^2(\QT)^d}^2 & = \int_\Ftime \jump{\vh}_{\bN} \cdot \mvl{\Lu \vh}_{1 - \alpha_F} \dS + \int_\FD \vh (\Lu \vh) \cdot \vnOmega \dS \\
\nonumber
& \leq \Norm{\eta_F^{\frac12} \jump{\vh}_{\bN}}{L^2(\Ftime)^d}\Norm{\eta_F^{-\frac12} \mvl{\Lu \vh}_{1 - \alpha_F}}{L^2(\Ftime)^d} \\
& \quad + \Norm{\eta_F^{\frac12} \vh}{L^2(\FD)} \Norm{\eta_F^{-\frac12} \Lu \vh}{L^2(\FD)^d}.
\label{EQN::AUX-BOUND-Lu}
\end{align}
Moreover, using the polynomial trace inequality in~\eqref{EQN::TRACE-INEQUALITY-1}, Assumption~\ref{ASSUMPTION::PRISMATIC-MESH} on~$\Th$, the definition in~\eqref{EQN::STABILIZATION-TERM} of the stabilization function~$\eta_F$, and the ellipticity condition~\eqref{EQN::DIFFUSION}, we obtain
\begin{align*}
\Norm{\eta_F^{-\frac12} \mvl{\Lu \vh}_{1 - \alpha_F}}{L^2(\Ftime)^d}^2 &  + \Norm{\eta_F^{-\frac12} \Lu \vh}{L^2(\FD)^d}^2  \\
&  \leq 2 \sum_{K \in \Th} 
\sum\limits_{
\substack{
F = \Fx \times \Ft, \\
\Fx \in \mathcal{T}_{\partial \Kx}
}
}
\!\!\!\!\!\!
\eta_F^{-1} \Norm{ \Lu \vh}{L^2(F)^d}^2  \\
& \leq 2 \sum_{K \in \Th} 
\sum\limits_{
\substack{
F = \Fx \times \Ft, \\
\Fx \in \mathcal{T}_{\partial \Kx}
}
}
\!\!\!\!\!\!
\eta_F^{-1}\frac{(\pK + 1)(\pK + d)}{d} \frac{\abs{\Fx}}{\abs{\sKxF}} \Norm{\Lu \vh }{L^2(\Ft; L^2(\sKxF)^d)}^2 \\
& \leq \frac{2  C_s}{\eta^{\star}\theta} \Norm{\Lu \vh}{L^2(\QT)^d}^2,
\end{align*}
which, combined with~\eqref{EQN::AUX-BOUND-Lu}, gives the desired result with~$\CL= \sqrt{\frac{2 C_s}{\eta^{\star}\theta}}$.
\end{proof}

The proof of the discrete Poincar\'e inequality in Lemma~\ref{LEMMA::POINCARE} below follows the ideas used in~\cite[Lemma~2.15]{DiPietro_Droniou:2020}. First, we recall the following multiplicative trace inequality for simplices from~\cite[Eq. (1.52) in Ch.~1]{DiPietro_Droniou:2020}:
for any simplex~$T$ and any facet~$F$ of~$T$, there exists a positive constant~$\Ctr^{\star}$ depending only on~$d$ such that
\begin{equation}
\label{EQN::TRACE-INEQUALITY-CONTINUOUS}
\begin{split}
\Norm{v}{L^2(F)}^2 & \le \frac{|F|}{|T|} \Norm{v}{L^2(T)}^2 + 2 h_T \frac{|F|}{d |T|} \Norm{v}{L^2(T)} \Norm{\nabla v}{L^2(T)^d} \\
& \le \Ctr^{\star} \frac{|F|}{|T|} \big(\Norm{v}{L^2(T)}^2 + h_T^2 \Norm{\nabla v}{L^2(T)^d}^2 \big) \qquad \forall v \in H^1(T).
\end{split}
\end{equation}

\begin{lemma}[Discrete Poincar\'e inequality]
\label{LEMMA::POINCARE}
Let the assumptions of Lemma~\ref{LEMMA::BOUND-Lh} hold. There exists a positive constant~$C_P$ independent of the meshsize~$h$, the degree vector~$\p$, and the number of  facets per element such that
 \begin{equation}\label{EQ::POINCARE}
     \Norm{\vh}{L^2(\QT)}\leq  \CP \Tnorm{\vh}{\LDG} \qquad \forall \vh \in \Vp(\Th).
 \end{equation}    
\end{lemma}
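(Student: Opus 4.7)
The plan is to reduce the bound to a slicewise spatial discrete Poincaré inequality, following the strategy of DiPietro–Droniou mentioned in the statement, and then integrate over time.

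\textbf{Step 1 (control the broken spatial gradient).} By definition~\eqref{EQN::LDG-GRADIENT}, $\nablax \vh = \nablaxLDG \vh + \Lu \vh$ on every $K \in \Th$. Combining this with the ellipticity~\eqref{EQN::DIFFUSION} of $\bk$ and the lifting bound in Lemma~\ref{LEMMA::BOUND-Lh}, I immediately obtain a first inequality of the form
\[
\Norm{\nabla_{\bx,h} \vh}{L^2(\QT)^d} \ \leq \ \theta^{-1/2}\Norm{\sqrt{\bk}\nablaxLDG \vh}{L^2(\QT)^d} + \CL\big(\Norm{\eta_F^{1/2}\jump{\vh}_{\bN}}{L^2(\Ftime)^d} + \Norm{\eta_F^{1/2}\vh}{L^2(\FD)}\big) \ \lesssim \ \Tnorm{\vh}{\LDG}.
\]
This isolates the broken gradient on the right-hand side so that only a classical spatial $H^1$-type Poincaré argument remains to be made.

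\textbf{Step 2 (slicewise duality).} For a.e.\ $t \in (0,T)$, the trace $\vh(\cdot,t)$ is a broken polynomial on the spatial cross-section $\Thx(t) := \{\Kx : K = \Kx\times\Kt\in\Th,\ t\in\Kt\}$. I would introduce the dual potential $\varphi(\cdot,t) \in H_0^1(\Omega)$ solving $-\Deltax \varphi = \vh(\cdot,t)$ in $\Omega$ and invoke elliptic regularity $\Norm{\varphi(\cdot,t)}{H^{1+s}(\Omega)} \lesssim \Norm{\vh(\cdot,t)}{L^2(\Omega)}$ for some $s\in(0,1]$ depending on $\Omega$. Elementwise integration by parts, together with the single-valuedness of $\nablax\varphi$ across interior faces (since $\varphi\in H^1_0(\Omega)$), gives
\[
\Norm{\vh(\cdot,t)}{L^2(\Omega)}^2 = \sum_{\Kx\in\Thx(t)}\int_{\Kx} \nablax \vh\cdot\nablax\varphi\dx - \sum_{F_{\bx}} \int_{F_{\bx}}\!\!\nablax\varphi\cdot\jump{\vh(\cdot,t)}_{\bN}\dS_\bx - \int_{\partial\Omega}\vh(\cdot,t)\,\nablax\varphi\cdot\bnOmega\dS_\bx.
\]
Cauchy–Schwarz controls the volumetric term by $\Norm{\nabla_{\bx,h}\vh(\cdot,t)}{L^2(\Omega)^d}\Norm{\nablax\varphi}{L^2(\Omega)^d}$. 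For the facet terms, I would split $\nablax\varphi$ as $(\eta_F^{-1/2}\nablax\varphi)\cdot(\eta_F^{1/2}\,\cdot)$ and estimate $\Norm{\eta_F^{-1/2}\nablax\varphi}{L^2(F)^d}$ by the multiplicative trace inequality~\eqref{EQN::TRACE-INEQUALITY-CONTINUOUS} applied on the simplices $\sKxF$ provided by Assumption~\ref{ASSUMPTION::PRISMATIC-MESH}; the scaling $|F_\bx|/|\sKxF| \lesssim (\hKx)^{-1}$ together with the choice~\eqref{EQN::STABILIZATION-TERM} of $\eta_F$ makes the trace constant absorbed and $p$-independent. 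This yields, for a.e.\ $t$, a spatial estimate
\[
\Norm{\vh(\cdot,t)}{L^2(\Omega)}^2 \lesssim \Norm{\nabla_{\bx,h}\vh(\cdot,t)}{L^2(\Omega)^d}^2 + \Norm{\eta_F^{1/2}\jump{\vh(\cdot,t)}_{\bN}}{L^2(\Ftime(t))^d}^2 + \Norm{\eta_F^{1/2}\vh(\cdot,t)}{L^2(\FD(t))}^2.
\]

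\textbf{Step 3 (time integration and conclusion).} Integrating the slicewise estimate over $t\in(0,T)$ produces $\Norm{\vh}{L^2(\QT)}^2$ on the left and the square of the right-hand side of Step 1 on the right, from which~\eqref{EQ::POINCARE} follows with a constant depending on $\theta$, $\CL$, $C_s$, $\eta^\star$, the regularity index $s$, and $\Omega$, but independent of $h$, $\p$ and the number of facets per element.

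\textbf{Expected obstacle.} The main delicate point is that the spatial cross-section $\Thx(t)$ varies with $t$ because of the hanging time-like facets, so a uniform-in-$t$ constant cannot rely on any notion of ``bounded number of facets per element''. The geometric invariance needed is precisely the boundary subtriangulation with the shape-regularity parameter $C_s$ of Assumption~\ref{ASSUMPTION::PRISMATIC-MESH}, and the degree-dependent weight in~\eqref{EQN::STABILIZATION-TERM} is exactly what is required to absorb the trace-inequality factors in the duality argument. A minor but technical second issue is the reliance on $H^{1+s}$ elliptic regularity on the Lipschitz polytope $\Omega$; if one wishes to avoid it, one can replace the duality with a smoothing-operator approach mapping broken polynomials to $H_0^1(\Omega)$, with the $L^2$-error controlled by the same jump terms.
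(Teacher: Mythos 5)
Your overall strategy---represent $\Norm{\vh}{L^2(\QT)}^2$ by duality against a vector field, integrate by parts elementwise, and absorb the facet contributions into the jump terms via the multiplicative trace inequality~\eqref{EQN::TRACE-INEQUALITY-CONTINUOUS} on the subsimplices $\sKxF$ and the scaling~\eqref{EQN::STABILIZATION-TERM} of $\eta_F$---is exactly the structure of the paper's proof. The difference lies in the dual object, and this is where your version has a genuine gap. The paper does not solve a Poisson problem: it invokes \cite[Lemma 8.3 in Ch.~8]{DiPietro_Droniou:2020} to produce $\btau \in L^2(0,T;H^1(\Omega)^d)$ with $\nablax\cdot\btau = \vh$ and $\Norm{\btau}{L^2(\QT)^d} + \Norm{\btau}{L^2(0,T;H^1(\Omega)^d)} \le C_\Omega \Norm{\vh}{L^2(\QT)}$, i.e., a right inverse of the divergence with \emph{full} $H^1$ regularity on any bounded Lipschitz domain. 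The trace inequality~\eqref{EQN::TRACE-INEQUALITY-CONTINUOUS} is then applied directly to the components of $\btau$, which are in $H^1(\sKxF)$, and everything closes with constants depending only on $\theta$, $C_s$, $\eta^\star$, $\CL$, and $\Omega$.

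In your Step 2 the quantity that must be traced on the facets is $\nablax\varphi$, so applying~\eqref{EQN::TRACE-INEQUALITY-CONTINUOUS} requires $\nablax\varphi \in H^1(\sKxF)^d$, i.e., $\varphi \in H^2$ locally. This is \emph{not} delivered by the $H^{1+s}$ elliptic regularity you invoke when $s<1$, which is the generic situation on a nonconvex Lipschitz polytope (Jerison--Kenig only guarantees $s>1/2$ for the Dirichlet Laplacian). To repair the argument you would need a fractional multiplicative trace inequality with explicit $h$-scaling and a superadditivity argument for the $H^s$-seminorm over the disjoint simplices $\sKxF$; this is doable for $s>1/2$ but adds machinery and makes $C_P$ depend on the regularity index, whereas the divergence-lifting route avoids the issue entirely. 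Your Step 1 and the time-integration in Step 3 are fine, and your closing remark correctly identifies the obstacle; replacing $\varphi$ by the divergence lifting $\btau$ is precisely the "smoothing-operator" fix you allude to, and is what the paper does.
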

\begin{proof}
Let~$\vh \in \Vp(\Th)$. From~\cite[Lemma 8.3 in Ch.~8]{DiPietro_Droniou:2020}, we deduce that there exists a function~$\btau \in L^2(0, T; H^1(\Omega)^d)$, such that 
\begin{equation}
\label{EQN::STOKES-BOUNDS}
    \nablax \cdot \btau = \vh \quad \text{ and } \quad \Norm{\btau}{L^2(\QT)^d} + \Norm{\btau}{L^2(0, T; H^1(\Omega)^{d})} \le C_{\Omega} \Norm{\vh}{L^2(\QT)},
\end{equation}
for some positive constant~$C_{\Omega}$ depending only on~$\Omega$.

Integrating by parts in space, and using the Cauchy--Schwarz inequality, the continuity of the spatial normal component of~$\btau$, the nondegeneracy of~$\bk$ in~\eqref{EQN::DIFFUSION}, the bound in Lemma~\ref{LEMMA::BOUND-Lh} on~$\Norm{\Lu \vh}{L^2(\QT)}$, and the definition of the norm~$\Tnorm{\cdot}{\LDG}$ in~\eqref{EQN::LDG-NORM}, we get
\begin{alignat}{3}
\nonumber
\Norm{\vh}{L^2(\QT)}^2 
& = \int_{\QT} \vh \nablax \cdot \btau \dV \\
\nonumber
& = - \sum_{K \in \Th} \int_{K} \nablax \vh \cdot \btau \dV + \sum_{K \in \Th} \int_{\Kt} \int_{\partial \Kx} \vh \btau \cdot \bn_{\Kx} \dS \dt \\
\nonumber
& = - \sum_{K \in \Th} \int_K \nablax \vh \cdot \btau \dV + \int_{\Ftime} \btau \cdot \jump{\vh}_{\sf N} \dS + \int_{\FD} \vh \btau \cdot \bnOmega \dS \\
\nonumber
& \le \Norm{(\sqrt{\bk})^{-1}\sqrt{\bk} \nablax^h \vh}{L^2(\QT)^d} \Norm{\btau}{L^2(\QT)^d} + \Norm{\eta_F^{\frac12} \jump{\vh}_{\sf N}}{L^2(\Ftime)^d} \Norm{\eta_F^{-\frac12} \btau}{L^2(\Ftime)^d}  \\
\nonumber
& \quad + \Norm{\eta_F^{\frac12} \vh}{L^2(\FD)} \Norm{ \eta_F^{-\frac12} \btau}{L^2(\FD)^d} \\
\nonumber
& \le \theta^{-\frac12}\Norm{\sqrt{\bk} \nablaxLDG \vh}{L^2(\QT)^d} \Norm{\btau}{L^2(\QT)^d} + \theta^{-\frac12}\Norm{\sqrt{\bk} \Lu \vh}{L^2(\QT)^d}\Norm{\btau}{L^2(\QT)^d} \\
\nonumber
& \quad + \Norm{\eta_F^{\frac12} \jump{\vh}_{\sf N}}{L^2(\Ftime)^d} \Norm{\eta_F^{-\frac12} \btau}{L^2(\Ftime)^d} + \Norm{\eta_F^{\frac12} \vh}{L^2(\FD)} \Norm{ \eta_F^{-\frac12} \btau}{L^2(\FD)^d} \\
\nonumber
& \le \theta^{-\frac12} \Norm{\sqrt{\bk} \nablaxLDG \vh}{L^2(\QT)^d} \Norm{\btau}{L^2(\QT)^d} \\
\nonumber
& \quad + \CL \theta^{-\frac12} |\bk|_2 \big(\Norm{\eta_F^{\frac12} \jump{\vh}_{\sf N}}{L^2(\Ftime)^d} + \Norm{\eta_F^{\frac12} \vh}{L^2(\FD)} \big) \Norm{\btau}{L^2(\QT)^d} \\
\nonumber
& \quad + \Norm{\eta_F^{\frac12} \jump{\vh}_{\sf N}}{L^2(\Ftime)^d} \Norm{\eta_F^{-\frac12} \btau}{L^2(\Ftime)^d} + \Norm{\eta_F^{\frac12} \vh}{L^2(\FD)} \Norm{ \eta_F^{-\frac12} \btau}{L^2(\FD)^d} \\
\nonumber
& \le \sqrt{2}\max\{1, \theta^{-\frac12} (1 + \CL |\bk|_2)\} \Tnorm{\vh}{\LDG} \big(\Norm{\btau}{L^2(\QT)^d}^2 + \Norm{\eta_F^{-\frac12} \btau}{L^2(\Ftime)^d}^2 \\
\label{EQN::AUX-BOUND-POINCARE}
& \quad +  \Norm{\eta_F^{-\frac12} \btau}{L^2(\FD)^d}^2\big)^{\frac12}.
\end{alignat}
Therefore, it only remains to bound the last terms on the right-hand side of~\eqref{EQN::AUX-BOUND-POINCARE}.

Using the trace inequality~\eqref{EQN::TRACE-INEQUALITY-CONTINUOUS}, Assumption~\ref{ASSUMPTION::PRISMATIC-MESH}, the nondegeneracy of~$\bk$ in~\eqref{EQN::DIFFUSION}, and the definition of the stabilization parameter~$\eta_F$ in~\eqref{EQN::STABILIZATION-TERM}, we obtain
\begin{alignat}{3}
\nonumber
\Norm{\eta_F^{-\frac12} \btau}{L^2(\Ftime)^d}^2 & + 
\Norm{\eta_F^{-\frac12} \btau}{L^2(\FD)^d}^2 \\
\nonumber
& \le \Ctr^{\star}\sum_{K \in \Th} 
\sum\limits_{
\substack{
F = \Fx \times \Ft, \\
\Fx \in \mathcal{T}_{\partial \Kx}
}
}
\!\!\!\!\!\!
\eta_F^{-1} \frac{|\Fx|}{|\sKxF|} \big(\Norm{\btau}{L^2(\Ft; L^2(\sKxF)^d)}^2 + \diam(\sKxF)^2 \Norm{\nabla \btau}{L^2(\Ft; L^2(\sKxF)^{d \times d})}^2 \big) \\
\nonumber
& \le \Ctr^{\star}\sum_{K \in \Th} 
\sum\limits_{
\substack{
F = \Fx \times \Ft, \\
\Fx \in \mathcal{T}_{\partial \Kx}
}
}
\!\!\!\!\!\!
\eta_F^{-1} \frac{C_s d}{\hKx}\big(\Norm{\btau}{L^2(\Ft; L^2(\sKxF)^d)}^2 + \diam(\Omega)^2 \Norm{\nabla \btau}{L^2(\Ft; L^2(\sKxF)^{d \times d})}^2 \big) \\
\nonumber
& \le \frac{\Ctr^{\star} C_s d}{\eta^{\star} \theta } \Big(\Norm{\btau}{L^2(\QT)^d}^2 + \diam(\Omega)^2 \Norm{\nabla \btau}{L^2(\QT)^{d \times d}}^2\Big),
\end{alignat}
which, combined with~\eqref{EQN::AUX-BOUND-POINCARE} and~\eqref{EQN::STOKES-BOUNDS}, gives the desired bound~\eqref{EQ::POINCARE}.
\end{proof}

\begin{proposition}[Bound on~$\Tnorm{\Nh(\cdot)}{\LDG}$]
\label{PROP::BOUND-Nh-LDG}
Let the assumptions of Lemma~\ref{LEMMA::BOUND-Lh} hold. There exists a constant~$\CN > 0$ independent of the meshsize~$h$ and the degree vector~$\p$ such that the following bound holds for all~$v \in \Vp(\Th) + H^1(\Th)$:
\begin{equation}
\label{EQN::BOUND-Nh-LDG}
 \Tnorm{\Nh v}{\LDG} \leq  \CN\Bigg[ \left(\sum_{K \in \Th} \Norm{\dpt v}{L^2(K)}^2 \right)^{\frac12} 
 + \Norm{\jump{\lambdah^{-\frac12 } v}_t}{L^2(\Fspa)} + \Norm{\lambdah^{-\frac12} v}{L^2(\FO)}
 \Bigg].
\end{equation}
\end{proposition}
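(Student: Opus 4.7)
The natural starting point is to test the defining variational equation of the discrete Newton potential, namely~\eqref{EQN::DISCRETE-NEWTON}, with~$\wh = \Nh v \in \Vp(\Th)$ and use the coercivity identity~\eqref{EQN::COERCIVITY-Ah} to get
\begin{equation*}
\Tnorm{\Nh v}{\LDG}^2 \;=\; \Ah(\Nh v,\Nh v) \;=\; \mh(v,\Nh v).
\end{equation*}
Hence, the whole task reduces to bounding the three contributions in the definition of~$\mh(v,\Nh v)$ in terms of the right-hand side of~\eqref{EQN::BOUND-Nh-LDG} multiplied by~$\Tnorm{\Nh v}{\LDG}$, and then cancelling one factor of~$\Tnorm{\Nh v}{\LDG}$.

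For the volume term~$\sum_{K}\int_{K}\dpt v\,\Nh v\dV$, I would apply a broken Cauchy--Schwarz inequality and then invoke the discrete Poincar\'e inequality of Lemma~\ref{LEMMA::POINCARE} to absorb~$\Norm{\Nh v}{L^2(\QT)}$ into~$\CP\Tnorm{\Nh v}{\LDG}$. This yields the first contribution~$(\sum_K\Norm{\dpt v}{L^2(K)}^2)^{1/2}\Tnorm{\Nh v}{\LDG}$ directly.

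For the facet terms~$\int_{\FO}v\,\Nh v\dx$ and~$-\int_{\Fspa}(\Nh v)^+\jump{v}_t\dx$, the idea is to insert the weight~$\lambdah^{1/2}\cdot\lambdah^{-1/2}$ before applying Cauchy--Schwarz, so that~$\Nh v$ carries the weight~$\lambdah^{1/2}$ and~$v$ (respectively, its normal jump) carries~$\lambdah^{-1/2}$. This is where the local inclusion~$\VpK(K)\subset\Pp{\pK}{K}$ comes into play: the polynomial trace inequality~\eqref{EQN::TRACE-INEQUALITY-2} on a space-like facet~$F\subset \partial K_t$ gives
\begin{equation*}
\lK\Norm{\Nh v}{L^2(F)}^2 \;\le\; \Ctr\,\lK\frac{\pK^2}{\hKt}\Norm{\Nh v}{L^2(K)}^2 \;\le\; \Ctr\Norm{\Nh v}{L^2(K)}^2,
\end{equation*}
where the last inequality uses the definition~\eqref{EQN::LambdaK} of~$\lK$ together with the fact that taking~$K'=K$ in the definitions of~$\hhKt$ and~$\hpK$ yields~$\hhKt\le\hKt$ and~$\hpK\ge\pK$. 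Summing over all initial/space-like facets and applying Lemma~\ref{LEMMA::POINCARE} once more gives~$\Norm{\lambdah^{1/2}\Nh v}{L^2(\FO)}+\Norm{\lambdah^{1/2}(\Nh v)^+}{L^2(\Fspa)}\lesssim\Tnorm{\Nh v}{\LDG}$.

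The main technical obstacle is handling the weight~$\lambdah^{-1/2}$ on the normal jump so that the bound reads exactly~$\Norm{\jump{\lambdah^{-1/2}v}_t}{L^2(\Fspa)}$ rather than~$\Norm{\lambdah^{-1/2}\jump{v}_t}{L^2(\Fspa)}$. Since~$\lambdah$ is element-wise constant and can jump across space-like facets, the identity~$(\Nh v)^{+}\jump{v}_t=\lambdah^{1/2}(\Nh v)^{+}\,\jump{\lambdah^{-1/2}v}_t$ only holds when~$\lK^{-}=\lK^{+}$ on~$F\in\Fspa$, which is precisely a local quasi-uniformity of the time-step vector encoded in the very definition of~$\lambdah$ (via the~$\min/\max$ over neighbors sharing a time-like facet). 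I would argue at this step that these neighbour-minima and -maxima make~$\lK^{-}$ and~$\lK^{+}$ comparable up to a constant independent of~$h$ and~$\p$, so that~$\Norm{\lambdah^{-1/2}\jump{v}_t}{L^2(\Fspa)}$ and~$\Norm{\jump{\lambdah^{-1/2}v}_t}{L^2(\Fspa)}$ agree up to that constant. Combining the three bounds, dividing by~$\Tnorm{\Nh v}{\LDG}$, and collecting constants into a single~$\CN>0$ depending only on~$\theta$, $\eta^{\star}$, $C_s$, $\Ctr$, and~$\CP$ yields~\eqref{EQN::BOUND-Nh-LDG}.
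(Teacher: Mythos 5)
Your proposal follows essentially the same route as the paper's proof: testing~\eqref{EQN::DISCRETE-NEWTON} with~$\Nh v$ and using the coercivity identity~\eqref{EQN::COERCIVITY-Ah} to reduce everything to bounding~$\mh(v,\Nh v)$, then applying Cauchy--Schwarz together with the discrete Poincar\'e inequality of Lemma~\ref{LEMMA::POINCARE} for the volume term and the weighted Cauchy--Schwarz argument with the polynomial trace inequality~\eqref{EQN::TRACE-INEQUALITY-2} and~$\lK\pK^2/\hKt\le 1$ for the facet terms. The only difference is that you explicitly flag the placement of the weight~$\lambdah^{-1/2}$ inside the temporal jump, a point the paper's proof passes over silently; your treatment is at least as careful as the original.
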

\begin{proof}
Let~$v \in \Vp(\Th) + H^1(\Th)$. Using the definition in~\eqref{EQN::DISCRETE-NEWTON} of the discrete Newton potential, and the coercivity of~$\Ah(\cdot, \cdot)$ in Lemma~\ref{LEMMA::COERCIVITY-CONTINUITY-Ah}, we get
\begin{equation}
\label{EQN::AUX-BOUND-Nh}
\begin{split}
\Tnorm{\Nh v}{\LDG}^2 = \Ah(\Nh v, \Nh v) & = \mh(v,\Nh v) \\
& =  \sum_{K \in \Th} \int_K \dpt v \Nh v \dV - \int_{\Fspa} (\Nh v)^+ \jump{v}_t \dx + \int_{\FO} v \Nh v \dx.
\end{split}
\end{equation}

The first term on the right-hand side of identity~\eqref{EQN::AUX-BOUND-Nh} can be bounded using the Cauchy--Schwarz inequality and the Poincar\'e inequality in~\eqref{EQ::POINCARE} as follows:
\begin{align}
\nonumber
 \sum_{K \in \Th} \int_K \dpt v \Nh v \dV 
 \leq \sum_{K \in \Th} \Norm{\dpt v}{L^2(K)}  \Norm{\Nh v}{L^2(K)} & \leq
\left(\sum_{K \in \Th} \Norm{ \dpt v}{L^2(K)}^2 \right)^{\frac12} \Norm{ \Nh v}{L^2(\QT)} \\
\label{EQN::FIRST-TERM-BOUND-Nh}
& \leq
C_P	\left(\sum_{K \in \Th} \Norm{\dpt v}{L^2(K)}^2\right)^{\frac12}  \Tnorm{\Nh v}{\LDG}.
 \end{align}

As for the second and third terms on the right-hand side of identity~\eqref{EQN::AUX-BOUND-Nh}, we use the Cauchy--Schwarz inequality, the polynomial trace inequality in~\eqref{EQN::TRACE-INEQUALITY-2}, and the Poincar\'e inequality in~\eqref{EQ::POINCARE} to obtain
\begin{align}
\nonumber
- \int_{\Fspa} & (\Nh v)^+ \jump{v}_t \dx + \int_{\FO} v \Nh v \dx \\
\nonumber
&\leq \left(\Norm{\jump{\lambdah^{-\frac12} v}_t}{L^2(\Fspa)} + 
\Norm{\lambdah^{-\frac12} v}{L^2(\FO)}\right)\left(\Norm{ (\lambdah^{\frac12} \Nh v)^+ }{L^2(\Fspa)}^2 +\Norm{\lambdah^{\frac12} \Nh v}{L^2(\FO)}^2 \right)^{\frac12}\\
\nonumber
&\leq
\left(\Norm{\jump{ \lambdah^{-\frac12} v}_t }{L^2(\Fspa)}
+ \Norm{\lambdah^{-\frac12} v}{L^2(\FO)}\right)
\Big(\sum_{K \in \Th} \Ctr \lK \frac{\pK^2}{\hKt} \Norm{\Nh v}{L^2(K)}^2 \Big)^{\frac12} 
\\
\nonumber
& \leq 
 \Ctr^{\frac12} \left(\Norm{\jump{\lambdah^{-\frac12} v}_t}{L^2(\Fspa)} + 
\Norm{\lambdah^{-\frac12} v}{L^2(\FO)}\right) \Norm{\Nh v}{L^2(\QT)}
 \\
 \label{EQN::SECOND-THIRD-TERMS-BOUND-Nh}
 &\leq 
 C_P \Ctr^{\frac12} \left(\Norm{\jump{\lambdah^{-\frac12} v}_t}{L^2(\Fspa)} + 
\Norm{\lambdah^{-\frac12} v}{L^2(\FO)}\right) \Tnorm{\Nh v}{\LDG}.
 \end{align}
Bound~\eqref{EQN::BOUND-Nh-LDG} then follows with~$\CN = C_P \Ctr^{\frac12}$ by combining identity~\eqref{EQN::AUX-BOUND-Nh} with bounds~\eqref{EQN::FIRST-TERM-BOUND-Nh} and~\eqref{EQN::SECOND-THIRD-TERMS-BOUND-Nh}.
\end{proof}

\begin{remark}[Continuous dependence on the data]
If the space~$\Vp(\Th)$ is such that the discrete Poincar\'e inequality~\eqref{EQ::POINCARE} is valid, the solution~$\uh \in \Vp(\Th)$ to the space--time LDG formulation~\eqref{EQN::REDUCED-VARIATIONAL-DG} satisfies the following continuous dependence on the data:
\begin{equation*}
\Tnorm{\uh}{\LDGN} \le 2\sqrt{2} \big(\CP \Norm{f}{L^2(\QT)} + \sqrt{2} \Norm{u_0}{L^2(\Omega)} \big) \quad \text{ and }  \quad 
\Tnorm{\uh}{\LDGp} \le \gamma_I^{-1} \big(\CP \Norm{f}{L^2(\QT)} + \sqrt{2} \Norm{u_0}{L^2(\Omega)} \big),
\end{equation*}
which follow from the inf-sup conditions in Theorems~\ref{THM::INF-SUP-NEWTON} and~\ref{THM::INF-SUP-TWO}, the definition of the method in~\eqref{EQN::REDUCED-VARIATIONAL-DG}, and the Cauchy--Schwarz and the triangle inequalities.
\eremk
\end{remark}

\subsection{Difficulties in proving optimal convergence rates in the~\texorpdfstring{$L^2(\QT)$}{} norm} \label{SECT::A-PRIORI-L2}
In this section, we briefly discuss the difficulties of deriving~\emph{a priori} error estimates in the mesh-independent norm~$L^2(\QT)$ by means of a duality argument.
We consider polynomial discrete spaces satisfying the local inclusion in Assumption~\ref{ASM::TIME-DERIVATIVE-ASSUMPTION}.

For a given~$\phi \in L^2(\QT)$, we consider the following adjoint problem:
\begin{subequations}
\label{EQN::ADJOINT-PROBLEM}
\begin{align}
\label{EQN::ADJOINT-PROBLEM-1}
    -\dpt z - \nablax \cdot (\bk \nablax z) = \phi& \qquad \text{ in } \QT, \\
\label{EQN::ADJOINT-PROBLEM-2}
    z = 0 & \qquad \text{ on } \partial \Omega \times (0,T], \\
\label{EQN::ADJOINT-PROBLEM-3}
    z = 0 & \qquad \text{ on } \Omega \times \{T\},
\end{align}
\end{subequations}
and assume that~$\bk$ and~$\Omega$ are such that the following parabolic regularity estimate holds (cf.~\cite[Thm.~5 in \S7.1]{Evans_1998}):
\begin{equation}
\label{EQN::PARBOLIC-REGULARITY}
    \Norm{z}{L^{\infty}(0,T;H^1_0(\Omega))}+
     \Norm{z}{L^{2}(0,T;H^2(\Omega))}+
      \Norm{z}{H^{1}(0,T;L^2(\Omega))}\leq C_R \Norm{\phi}{L^{2}(\QT)},
\end{equation}
for some~$C_R > 0$ depending only on~$\Omega$, $T$, and~$\bk$.
\begin{theorem}[\emph{A priori} error bounds in the~$L^2(\QT)$ norm] 
\label{THM::A-PRIORI-L2}
Let~$u \in H^{\frac32 + \varepsilon}(\Th) \cap X$ be the solution to the continuous weak formulation~\eqref{EQN::CONTINUOUS-WEAK-FORMULATION}, and let~$\uh \in \Vp(\Th)$ be the solution to the space--time LDG formulation~\eqref{EQN::REDUCED-VARIATIONAL-DG}. If~$\Omega$ is such that the parabolic regularity estimate~\eqref{EQN::PARBOLIC-REGULARITY} holds, and the discrete spaces~$(\Vp(\Th), \Mp(\Th))$ satisfy Assumptions \ref{ASM::COMPATIBILITY-CONDITION} and~\ref{ASM::TIME-DERIVATIVE-ASSUMPTION},
then the following bound holds:
\begin{equation}
\label{EQN::BOUND-L2}
\Norm{u - \uh}{L^2(\QT)}^2 \leq \sqrt{2} \Tnorm{u - \uh}{\LDGp} \Tnorm{z - \zh}{\LDGss} + |\Rh(z, u - \uh)| + |\Rh(u, z - \zh)| \quad \forall \zh \in \Vp(\Th),
\end{equation}
where~$z$ is the solution to the adjoint problem~\eqref{EQN::ADJOINT-PROBLEM} with~$\phi = u - \uh$.
\end{theorem}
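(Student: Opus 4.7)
The strategy is a standard duality argument: use the regularity of the adjoint solution $z$ to rewrite $\|u-u_h\|_{L^2(Q_T)}^2$ through $\mathcal{B}_h$, and then exploit the near-Galerkin-orthogonality provided by the identity~\eqref{EQN::IDENTITY-Rh} and the continuity bound~\eqref{EQN::CONT-LDGP-2}.

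The crucial first step is to establish the \emph{dual representation}
\begin{equation*}
\Bh(v,z) \;=\; \int_{\QT} v\,\phi\,\di V \;+\; \Rh(z,v) \qquad \forall v \in \Vp(\Th)+H^1(\Th),
\end{equation*}
for $z$ the strong solution of~\eqref{EQN::ADJOINT-PROBLEM} with $\phi = u-\uh$. For the time part $\mh(v,z)$, I would integrate $\dpt v \cdot z$ by parts elementwise; since $z$ has no jumps in time, the sum of the resulting boundary contributions on $\Fspa$ cancels the explicit jump term in $\mh$, the $\FT$ term vanishes by $z(\cdot,T)=0$, and the $\FO$ terms cancel pairwise, producing $-\int_{\QT} v\,\dpt z \,\di V$. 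For the elliptic part $\Ah(v,z)$, the facet terms in $\Ah$ collapse because $\jump{z}_{\bN}=0$ on $\Ftime$ and $z=0$ on $\FD$ (so $\nablaxLDG z = \nablax z$ by the footnote); spatial integration by parts on the volume term yields $-\int_{\QT} v\,\nablax\!\cdot(\bk\nablax z)\,\di V$ together with facet terms involving $\mvl{\bk\nablax z}$; finally, rewriting the lifting contribution via the definition~\eqref{EQN::LIFTING-Uh} introduces the projection $\PiO$ and converts the residual facet terms into precisely $\Rh(z,v)$. Parabolic regularity~\eqref{EQN::PARBOLIC-REGULARITY} ensures $\nablax\!\cdot(\bk\nablax z)\in L^2(\QT)$, so every manipulation is well defined.

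Taking $v=u-\uh$ in the representation gives
\begin{equation*}
\Norm{u-\uh}{L^2(\QT)}^2 \;=\; \Bh(u-\uh,\,z) \;-\; \Rh(z,\,u-\uh).
\end{equation*}
Next, for an arbitrary $\zh\in\Vp(\Th)$, I split $\Bh(u-\uh,z) = \Bh(u-\uh,z-\zh) + \Bh(u-\uh,\zh)$. The second piece is handled by the inconsistency identity~\eqref{EQN::IDENTITY-Rh}: $\Bh(u-\uh,\zh) = \Rh(u,\zh) = \Rh(u,\zh-z) + \Rh(u,z)$. Because $z\in H^1_0(\Omega)\cap H^2(\Omega)$ at each time, $\jump{z}_{\bN}=0$ on $\Ftime$ and $z=0$ on $\FD$, so $\Rh(u,z)=0$ from the very definition~\eqref{EQN::DEF-Rh}. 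Collecting the identities,
\begin{equation*}
\Norm{u-\uh}{L^2(\QT)}^2 \;=\; \Bh(u-\uh,\,z-\zh) \;-\; \Rh(u,\,z-\zh) \;-\; \Rh(z,\,u-\uh).
\end{equation*}
Applying the triangle inequality and the continuity bound~\eqref{EQN::CONT-LDGP-2} to the first term yields the desired estimate~\eqref{EQN::BOUND-L2}.

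The main technical obstacle is the dual representation in the first paragraph: keeping track of the weighted averages $\mvl{\cdot}_{1-\alpha_F}$ and the asymmetric role of $\alpha_F$ so that the facet leftovers assemble exactly into $\Rh(z,v)$ rather than some larger residual. The key trick is to insert $\PiO(\bk\nablax z)$ in the lifting identity and exploit that $\bk\nablax z$ itself is continuous across $\Ftime$, which lets the two facet contributions combine cleanly into $\mvl{\bk\nablax z - \bk\PiO\nablax z}_{1-\alpha_F}$. Everything else (Galerkin splitting, continuity of $\Bh$, vanishing of $\Rh(u,z)$) follows the template already used in Theorem~\ref{THM::ERROR-BOUND-ENERGY}.
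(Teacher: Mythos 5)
Your proposal is correct and follows essentially the same route as the paper: the duality representation $\Norm{u-\uh}{L^2(\QT)}^2 = \Bh(u-\uh,z)-\Rh(z,u-\uh)$ obtained by elementwise integration by parts and insertion of $\PiO$ via the lifting identity~\eqref{EQN::LIFTING-Uh}, followed by the splitting with $\zh$ using identity~\eqref{EQN::IDENTITY-Rh} (together with $\Rh(u,z)=0$) and the continuity bound~\eqref{EQN::CONT-LDGP-2}. The only difference is presentational: you make the cancellation $\Rh(u,z)=0$ explicit, whereas the paper absorbs it silently into the final line of its chain of identities.
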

\begin{proof}
We set~$\phi=u-\uh$ in~\eqref{EQN::ADJOINT-PROBLEM} and multiply~\eqref{EQN::ADJOINT-PROBLEM-1} by~$\phi$. Integration by parts, the regularity of~$z$, the orthogonality properties of~$\PiO$, the definition in~\eqref{EQN::DEF-Rh} of the inconsistency bilinear form~$\Rh(\cdot, \cdot)$, and the definition in~\eqref{EQN::LIFTING-Uh} of the lifting operator~$\Lu$ yield
\begin{align*}
    \Norm{u-\uh}{L^{2}(\QT)}^2 & = \sum_{K\in\Th}\int_K ( -\dpt z - \nablax \cdot (\bk \nablax z))(u-\uh)\dV\\
& =
    \sum_{K\in\Th}\int_K \Big(\dpt (u-\uh) z  + \bk \nablax(u - \uh) \cdot \nablax z \Big)\dV 
    - \int_{\Fspa} z^+ \jump{u-\uh}_t \dx \\
& \quad 
    + \int_{\FO} z(u - \uh) \dx
    -\int_{\Ftime} \mvl{\bk\nablax z}_{1 - \alpha_F} \cdot \jump{u-\uh}_\bN \dS 
    - \int_{\FD}(u-\uh)\bk\nablax z \cdot \vnOmega \dS \\
& =
    \sum_{K\in\Th}\int_K \Big(\dpt (u-\uh) z  + \bk \nablax(u - \uh) \cdot \nablax z \Big)\dV 
    - \int_{\Fspa} z^+ \jump{u-\uh}_t \dx \\
& \quad 
    + \int_{\FO} z(u - \uh) \dx
    -\int_{\Ftime} \mvl{\bk\nablax z - \bk \PiO \nablax z }_{1 - \alpha_F} \cdot \jump{u-\uh}_\bN \dS
    \\
& \quad 
    - \int_{\FD}(u-\uh) (\bk\nablax z - \bk \PiO\nablax z) \cdot \vnOmega \dS + \int_{\QT} \bk \Lu \uh \cdot \nablax z \dV \\
& =
    \sum_{K\in\Th}\int_K \Big(\dpt (u-\uh) z  + \bk \nablaxLDG (u - \uh) \cdot \nablaxLDG z \Big)\dV 
    - \int_{\Fspa} z^+ \jump{u-\uh}_t \dx \\
& \quad 
    + \int_{\FO} z(u - \uh) \dx
    - \int_{\Ftime} \mvl{\bk\nablax z - \bk \PiO \nablax z }_{1 - \alpha_F} \cdot \jump{u - \uh}_\bN \dS
    \\
& \quad 
    - \int_{\FD} (u - \uh) (\bk\nablax z - \bk \PiO\nablax z) \cdot \vnOmega \dS \\
& = \Bh(u - \uh, z) - \Rh(z, u - \uh) \\
& = \Bh(u - \uh, z - \zh) - \Rh(z, u - \uh) - \Rh(u, z - \zh),
\end{align*}
where, in the last equation, we have used identity~\eqref{EQN::IDENTITY-Rh}.

Bound~\eqref{EQN::BOUND-L2} then follows from the continuity bound in~\eqref{EQN::CONT-LDGP-2} for the bilinear form~$\Bh(\cdot, \cdot)$.
\end{proof}

\begin{remark}[Limited regularity of the adjoint solution]
The last two terms on the right-hand side of~\eqref{EQN::BOUND-L2} can be bounded using Lemma~\ref{LEMMA::INCONSISTENCY-BOUND}.
The main difficulty in deriving~\emph{a priori} error estimates in the~$L^2(\QT)$ norm is the limited regularity in time of the continuous solution~$z$ to the adjoint problem~\eqref{EQN::ADJOINT-PROBLEM}. This issue was overcome in the corrected version~\cite{Cangiani_Dong_Georgoulis:2017ARXIV} of reference~\cite{Cangiani_Dong_Georgoulis:2017} by using a continuous-in-time interpolant operator. However, extending this approach to general prismatic meshes is not straightforward.
\eremk
\end{remark}

\section{Some choices of discrete spaces\label{SECT::DISCRETE-SPACES}}

We introduce four different choices for the local discrete space~$\VpK(K)$ and discuss their properties.
More precisely, we consider some standard~$\Pp{\pK}{K}$, tensor-product~$\IQ^{\pK}(K)$, 
quasi-Trefftz~$\qT^{\pK}(K)$, and embedded Trefftz~$\eT^{\pK}(K)$ polynomial spaces. 
Moreover, using the approximation properties of each space and the error bounds from the previous section, we derive some~\emph{a priori} error estimates.

In Table~\ref{tab:spaces}, we compare the different spaces in terms of their dimension, a choice of the discrete space~$\MpK(K)$ guaranteeing the validity of the local compatibility condition in Assumption~\ref{ASM::COMPATIBILITY-CONDITION}, and the validity of the local inclusion condition in Assumption~\ref{ASM::TIME-DERIVATIVE-ASSUMPTION}. The error estimates in Theorems~\ref{THM::ERROR-ENERGY-QK}, \ref{THM::ERROR-ENERGY-PK}, and~\ref{THM::ERROR-ENERGY-QTK} predict the same convergence rates in the energy norms for the corresponding spaces, but under different regularity assumptions. More precisely, the solution~$u$ to~\eqref{EQN::MODEL-PROBLEM} is assumed to belong to local Bochner spaces (separating space and time regularity) for tensor-product polynomials, to local space–time Sobolev spaces $H^{\ell}$ for standard polynomials, and to local space--time $\mathcal{C}^{\ell}$ spaces for quasi-Trefftz polynomials.
\begin{table}[!ht]
\centering
\small
\begin{tabular}{lcccc}
    \toprule
    \multirow{2}{*}{Space} & \multirow{2}{*}{$\VpK(K)$} & \multirow{2}{*}{$\MpK(K)$} & \multirow{2}{*}{Dimension} & Inclusion \\
    & & & & condition~\eqref{EQN::DPT-CONDITION} \\
\hline \\[0.01em]
Tensor-product~\S\ref{SECT::TENSOR-PRODUCT} & $\IQ^{\pK}(K)$ in~\eqref{DEF::QK} &
$\IQ^{\pK}(K)^d$ & $(\pK + 1) \cdot \binom{\pK + d}{\pK}\approx p^{d + 1}$ & \cmark \\[1em]
   Standard~\S\ref{SECT::STANDARD-POLY} & $\Pp{\pK}{K}$ in~\eqref{DEF::PK} 
   & $\Pp{\pK}{K}^d$ & $\binom{\pK + d + 1}{\pK}\approx p^{d+1}$ & \cmark \\[1em]
    Quasi-Trefftz~\S\ref{SECT::QUASI-TREFFTZ} & $\qT^{\pK}(K)$ in~\eqref{DEF::QTK} & $\Pp{\pK}{K}^d$ & $\binom{\pK + d}{\pK} + \binom{\pK - 1 + d}{\pK - 1} \approx p^{d}$ & \xmark \\[1em]
    Embedded-Trefftz~\S\ref{SECT::EMBEDDED-TREFFTZ} & $\eT^{\pK}(K)$ in~\eqref{DEF::ETK} & $\Pp{\pK}{K}^d$ & $\binom{\pK + d}{\pK} + \binom{\pK - 1 + d}{\pK - 1} \approx p^{d}$ & \xmark \\[1em]
    \bottomrule
\end{tabular}
\vspace{3ex}
\caption{Comparison of different choices for the local discrete space~$\VpK(K)$: a choice for the local discrete space~$\MpK(K)$ satisfying the compatibility condition in Assumption~\ref{ASM::COMPATIBILITY-CONDITION} (\textbf{third column}), the dimension of~$\VpK(K)$ (\textbf{fourth column}), and the validity of the local inclusion condition in Assumption~\ref{ASM::TIME-DERIVATIVE-ASSUMPTION} (\textbf{fifth column}).}
\label{tab:spaces}
\end{table}

In the rest of this section, we write~$a \lesssim b$ meaning that there exists a positive constant~$C$ independent of meshsize~$h$, the degree vector~$\p$, and the maximum number of facets such that~$a \le C b$.
Similarly, we use~$a \lp b$ to indicate the possibility that the constant~$C$ depends on the degree vector~$\p$.
Moreover, we write~$a \simeq b$ whenever~$a \lesssim b$ and~$b \lesssim a$.

\subsection{Tensor-product polynomials \label{SECT::TENSOR-PRODUCT}}
We consider the following tensor-product piecewise polynomial space:
\begin{equation}
\label{DEF::QK}
\VpK(K) = \IQ^{\pK}(K) := \Pp{\pK}{\Kx} \otimes \Pp{\pK}{\Kt}.
\end{equation}

Let~$\{\Th\}_{h>0}$ be a family of prismatic space--time meshes for the domain~$\QT = \Omega \times (0, T)$.
Given a partition~$\Th$, as an extension of~\cite[Def. 5.2]{Cangiani_Dong_Georgoulis:2017}, we call \emph{covering} a set~$\Tc = \{\calK\}$ of shape-regular~$(d+1)$-dimensional prisms, whose bases are~$d$-dimensional simplices or hypercubes such that, for each~$K = \Kx \times \Kt \in \Th$, there exists~$\calK = \calKx \times \Kt \in \Tc$ with~$K \subset \calK$.

\begin{assumption}[Covering of~$\Th$]
\label{ASM::COVERING}
There exists a positive integer~$N_{\QT}$ independent of the mesh parameters such that, for any mesh~$\Th \in \{\Th\}_{h > 0}$, there exists a covering~$\Tc$ of~$\Th$ satisfying
\begin{equation*}
\mathrm{card} \{K' \in \Th \, : \, K' \cap \calK \neq \emptyset \text{ for some } \calK \in \Tc \text{ with } K \subset \calK\}  \le N_{\QT} \qquad \forall K \in \Th,
\end{equation*}
which implies that~$\hcalKx := \diam(\calKx) \lesssim \hKx$ for each pair~$K =  \Kx  \times\Kt\in \Th$ and~$\calK = \calKx\times\Kt   \in \Tc$ with~$K \subset \calK$.
\end{assumption}

For any Lipschitz domain~$\Upsilon \subset \IR^{d}$ and~$s \in \IN$, 
the Stein's \emph{extension} operator~$\frakEx : H^s(\Upsilon) \rightarrow H^s(\IR^d)$ 
is a linear operator with the following properties (see~\cite[Thm.~5 in Ch.~VI]{Stein_1970}): 
for all~$v \in H^s(\Upsilon)$, it holds
\begin{equation}
\label{EQN::STEIN}
\frakEx v_{|_{\Upsilon}} = v  \quad \text{ and } \quad \Norm{\frakEx v}{H^s(\IR^d)} \lesssim \Norm{v}{H^s(\Upsilon)},
\end{equation}
where the hidden constant depends only on~$s$ and the shape of~$\Upsilon$.

We now recall the following approximation results from~\cite[Lemmas 23 and 33]{Cangiani_Dong_Georgoulis_Houston_2017}.

\begin{lemma}[Estimates of~$\TPix$]
\label{LEMMA::ESTIMATE-TPI}
Let Assumptions~\ref{ASSUMPTION::PRISMATIC-MESH} and~\ref{ASM::COVERING} 
on~$\Th$ hold. Let also~$\Th \in \{\Th\}_{h > 0}$ and~$\Tc$ be its corresponding covering from Assumption~\ref{ASM::COVERING}.
For any~$K = \Kx \times \Kt \in \Th$ and~$v_{|_K} \in L^2(\Kt; H^{l_K}(\Kx))$ ($l_K > \frac12$), there exists~$\TPix v_{|_K} \in L^2(\Kt; \Pp{\pK}{\Kx})$, such that
\begin{subequations}
\begin{alignat}{3}
\label{EQN::ESTIMATE-TPI-VOLUME}
\Norm{v - \TPix v}{L^2(\Kt; H^q(\Kx))} & \lesssim \frac{\hKx^{\ellK - q}}{\pK^{l_K - q}} \Norm{\frakEx v}{L^2(\Kt; H^{l_K}(\calKx))} & & \quad \text{ for all } 0 \le q \le l_K, \\
\label{EQN::ESTIMATE-TPI-STABILITY}
\Norm{\TPix v}{L^2(K)} & \lesssim \frac{\hKx}{\pK} \Norm{\frakEx v}{L^2(\Kt; H^1(\calKx))} + \Norm{v}{L^2(K)}, \\
\label{EQN::ESTIMATE-TPI-BOUNDARY}
\Norm{v - \TPix v}{L^2(\Kt; L^2(\partial \Kx))} & \lesssim \frac{\hKx^{\ellK-\frac12}}{\pK^{l_K-\frac12}}\Norm{\frakEx v}{L^2(\Kt; H^{l_K}(\calKx))},  
\end{alignat}
\end{subequations}
where~$\ellK = \min\{\pK + 1, l_K\}$.
\end{lemma}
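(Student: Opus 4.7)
The plan is to reduce each of the three bounds to a standard spatial $hp$-approximation estimate on the shape-regular covering prism $\calK = \calKx \times \Kt$ from Assumption~\ref{ASM::COVERING}, applied pointwise in time, and then to transfer the bound back to $\Kx$ via the Stein extension in~\eqref{EQN::STEIN}. Concretely, I would define the operator $\TPix$ as follows: for almost every $t \in \Kt$, first extend the trace $v(\cdot, t) \in H^{l_K}(\Kx)$ to $\frakEx v(\cdot, t) \in H^{l_K}(\IR^d)$ using Stein's operator; then let $\widetilde{\pi}^{\bx}_{\calKx} : L^2(\calKx) \to \Pp{\pK}{\calKx}$ be any standard $hp$-stable spatial projection on the shape-regular reference-type prism base $\calKx$ (e.g., the $L^2$-orthogonal or Babu\v{s}ka--Suri projection); and finally set $(\TPix v)(\cdot, t) := \widetilde{\pi}^{\bx}_{\calKx}\bigl(\frakEx v(\cdot, t)\bigr)\big|_{\Kx}$. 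Since $\Kx \subset \calKx$, this yields a well-defined element of $L^2(\Kt; \Pp{\pK}{\Kx})$.

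For the volume estimate~\eqref{EQN::ESTIMATE-TPI-VOLUME}, I would apply, for each fixed $t$, the classical $hp$ polynomial approximation estimate on the shape-regular domain $\calKx$:
\begin{equation*}
\Norm{\frakEx v(\cdot,t) - \widetilde{\pi}^{\bx}_{\calKx} \frakEx v(\cdot,t)}{H^q(\calKx)} \lesssim \frac{\hcalKx^{\ellK - q}}{\pK^{l_K - q}} \Norm{\frakEx v(\cdot,t)}{H^{l_K}(\calKx)},
\end{equation*}
and then use $\hcalKx \lesssim \hKx$ from Assumption~\ref{ASM::COVERING} together with $\|\cdot\|_{H^q(\Kx)} \le \|\cdot\|_{H^q(\calKx)}$ on the restriction. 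Squaring and integrating over $\Kt$, and invoking Stein's stability bound~\eqref{EQN::STEIN} in the spatial variable uniformly in $t$, yields~\eqref{EQN::ESTIMATE-TPI-VOLUME}. The stability bound~\eqref{EQN::ESTIMATE-TPI-STABILITY} follows from a triangle inequality against the identity, using~\eqref{EQN::ESTIMATE-TPI-VOLUME} with $q = 0$ and $l_K = 1$ (applied to $\frakEx v$) to control $\TPix v$ in $L^2(K)$.

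For the trace bound~\eqref{EQN::ESTIMATE-TPI-BOUNDARY}, the strategy is to apply a multiplicative trace inequality on the simplices $\{s_K^{\Fx}\}$ attached to the facets $\Fx \in \mathcal{T}_{\partial \Kx}$ of Assumption~\ref{ASSUMPTION::PRISMATIC-MESH}, namely, the inequality~\eqref{EQN::TRACE-INEQUALITY-CONTINUOUS} rescaled appropriately, applied to $v - \TPix v$ pointwise in $t$. Using Assumption~\ref{ASSUMPTION::PRISMATIC-MESH} to bound $|\Fx|/|s_K^{\Fx}|$ by $C_s d/\hKx$, I combine the $L^2$ and $H^1$ volume estimates from~\eqref{EQN::ESTIMATE-TPI-VOLUME} (with $q = 0, 1$) to deduce the $\hKx^{l_K - \frac12}/\pK^{l_K - \frac12}$ decay, then integrate over $\Ft \subset \Kt$ and sum over the facets of $\partial \Kx$.

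The main subtlety I expect to encounter is ensuring that the spatial projection $\widetilde{\pi}^{\bx}_{\calKx}$ is chosen so as to be simultaneously $hp$-stable and to yield the correct $\pK$-dependence in the trace estimate for general shape-regular bases $\calKx$ (simplices or hypercubes). This is precisely the construction carried out in~\cite{Cangiani_Dong_Georgoulis_Houston_2017}, so it suffices to apply those results pointwise in time; the rest of the argument is a Fubini-type integration in $t$ together with Stein's extension bound, which commutes with the spatial projection since the latter acts only in $\bx$.
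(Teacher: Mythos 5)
Your argument is correct, and for the only one of the three bounds that the paper actually proves --- the stability estimate \eqref{EQN::ESTIMATE-TPI-STABILITY} --- it coincides with the paper's one-line proof (triangle inequality against the identity plus \eqref{EQN::ESTIMATE-TPI-VOLUME} with Sobolev index $1$). For \eqref{EQN::ESTIMATE-TPI-VOLUME} and \eqref{EQN::ESTIMATE-TPI-BOUNDARY} the paper supplies no argument at all: it imports them from Lemmas~23 and~33 of the cited reference \cite{Cangiani_Dong_Georgoulis_Houston_2017}, applied pointwise in time. Your reconstruction --- Stein extension in $\bx$, an $hp$-stable spatial projection on the shape-regular covering base $\calKx$, restriction to $\Kx$, and a Fubini-type integration over $\Kt$, followed by the multiplicative trace inequality on the simplices $\sKxF$ combined with the $q=0$ and $q=1$ volume bounds (where the cross term $\Norm{w}{L^2}\Norm{\nabla w}{L^2}$ dominates and produces exactly the half-power loss in $\hKx$ and $\pK$) --- is precisely the content of those cited lemmas rather than a genuinely different route, so it fills in what the paper outsources. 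One small caveat: your derivation of \eqref{EQN::ESTIMATE-TPI-BOUNDARY} relies on the $H^1$ volume estimate and therefore requires $l_K \ge 1$, whereas the statement allows $\tfrac12 < l_K < 1$; the cited reference covers that range by a separate trace argument for $H^{l_K}$ with $l_K > \tfrac12$. This is harmless here, since the paper only ever invokes the boundary estimate with $l_K > \tfrac32$.
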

\begin{proof}
The estimate~\eqref{EQN::ESTIMATE-TPI-STABILITY} follows by using the triangle inequality and~\eqref{EQN::ESTIMATE-TPI-VOLUME}.
\end{proof}

Given~$q \in \IN$ and a time interval~$(a, b) \subset \IR$, we denote by~$\pi^t_{q}$ the~$L^2(a, b)$-orthogonal projection in the space~$\Pp{q}{a, b}$.
In the proof of Theorem~\ref{THM::ERROR-ENERGY-QK} below, the operators~$\pi_q^t$ and~$\TPix$ are to be understood as applied pointwise in space and time, respectively.

Lemma~\ref{LEMMA::ESTIMATES-PI-TIME} below concerns some standard properties of~$\pi_q^t$ (see, e.g., \cite[Lemma 3.3 and Lemma 3.5]{Houston_Schwab_Suli_2002}, \cite[Lemma 2.4]{Canuto_Quarteroni_1982}, and \cite[Thm.~2]{Warburton_Hesthaven_2003}).

\begin{lemma}[Estimates of~$\pi_{q}^t$]
\label{LEMMA::ESTIMATES-PI-TIME}
Given an integer~$q \geq 1$ and a time interval~$(a, b) \subset \IR$, the following estimates hold for all~$v \in H^{s}(a, b)$ $(s \geq 1)$:
\begin{subequations}
\begin{alignat}{3}
\label{EQN::ESTIMATE-PI-TIME-VOLUME}
\Norm{v - \pi_q^t v}{L^2(a, b)} & \lesssim \frac{(b-a)^{\ts}}{q^s} \SemiNorm{v}{H^{s}(a, b)}, \\
\label{EQN::ESTIMATE-PI-TIME-INVERSE}
\Norm{v - \pi_q^t v}{H^1(a, b)} & \lesssim \frac{q^2}{(b - a)} \Norm{v}{L^2(a, b)}, \\
\label{EQN::ESTIMATE-PI-TIME-L2-Linfty}
\Norm{v - \pi_q^t v}{L^2(a, b)} & \lesssim (b - a)^{\frac12} \Norm{v}{L^{\infty}(a, b)}, \\
\label{EQN::ESTIMATE-PI-TIME-POLYNOMIAL-TRACE}
|\pi_q^t v(a)| + |\pi_q^t v(b)| & \lesssim \frac{q}{(b - a)^{\frac12}} \Norm{v}{L^2(a, b)}, \\
\label{EQN::ESTIMATE-PI-TIME-BOUNDARY}
|(v - \pi_q^t v)(a)| + |(v - \pi_q^t v)(b)| & \lesssim \frac{(b-a)^{\ts - \frac12} }{q^{s - \frac12} }\SemiNorm{v}{H^{s}(a, b)},
\end{alignat}
\end{subequations}
where~$\ts = \min\{q + 1, s\}$.
\end{lemma}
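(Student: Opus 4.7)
The plan is to reduce every estimate to the reference interval $\widehat{I}=(-1,1)$ via the affine map $\Phi:\widehat{I}\to(a,b)$, $\Phi(\hat{t})=\tfrac{a+b}{2}+\tfrac{b-a}{2}\hat{t}$, whose Jacobian is $(b-a)/2$. Under this change of variables, the $L^2$-orthogonal projection $\pi_q^t$ pulls back to the $L^2(\widehat{I})$-orthogonal projection $\widehat{\pi}_q$ onto $\mathbb{P}^q(\widehat{I})$, and any $L^2$, $H^1$, or pointwise norm scales by an explicit power of $(b-a)$. Hence each bound follows from its scale-free analogue on $\widehat{I}$ combined with a bookkeeping of scaling factors.

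For \eqref{EQN::ESTIMATE-PI-TIME-VOLUME}, I would expand $\hat{v}:=v\circ\Phi\in H^s(\widehat{I})$ in a Legendre series and recall the classical spectral approximation estimate $\|\hat{v}-\widehat{\pi}_q\hat{v}\|_{L^2(\widehat{I})}\lesssim q^{-s}|\hat{v}|_{H^s(\widehat{I})}$ (Canuto--Quarteroni), then multiply by $((b-a)/2)^{1/2}$ from the $L^2$ scaling and $((b-a)/2)^{-(s-1/2)}$ coming from the seminorm scaling, giving the factor $(b-a)^{\ts}/q^s$ after truncating at $\ts=\min\{q+1,s\}$. Estimate \eqref{EQN::ESTIMATE-PI-TIME-INVERSE} reduces, once we split $v-\pi_q^t v=(I-\pi_q^t)(v-\pi_q^t v)$ and observe $\pi_q^t v \in \Pp{q}{a,b}$, to a standard polynomial inverse inequality on $\widehat{I}$ of the form $\|\hat{p}\|_{H^1(\widehat{I})}\lesssim q^2\|\hat{p}\|_{L^2(\widehat{I})}$ plus $L^2$-stability of $\widehat{\pi}_q$; the factor $q^2/(b-a)$ then appears from combining the $q^2$ inverse estimate with the $(b-a)^{-1}$ produced by the Jacobian in the $H^1$ scaling.

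Estimate \eqref{EQN::ESTIMATE-PI-TIME-L2-Linfty} is immediate from $L^2$-stability of $\pi_q^t$ and the trivial bound $\|v\|_{L^2(a,b)}\leq(b-a)^{1/2}\|v\|_{L^\infty(a,b)}$ applied to $v-\pi_q^t v$ via the triangle inequality. For \eqref{EQN::ESTIMATE-PI-TIME-POLYNOMIAL-TRACE} I would use the polynomial trace inequality $|\hat{p}(\pm 1)|\lesssim q\|\hat{p}\|_{L^2(\widehat{I})}$, valid for $\hat{p}\in\mathbb{P}^q(\widehat{I})$, apply it to $\widehat{\pi}_q\hat{v}$, and rescale; $L^2$-stability of $\widehat{\pi}_q$ then replaces $\widehat{\pi}_q\hat{v}$ by $\hat{v}$ on the right. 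Finally, \eqref{EQN::ESTIMATE-PI-TIME-BOUNDARY} follows by writing $(v-\pi_q^t v)(a)$ (resp. $(b)$) as a Sobolev trace, invoking the continuous embedding $H^s(\widehat{I})\hookrightarrow C^0(\overline{\widehat{I}})$ for $s>1/2$ on the reference interval together with a duality/Aubin--Nitsche-type argument (or directly the Legendre expansion tail estimate) to obtain $|(\hat{v}-\widehat{\pi}_q\hat{v})(\pm 1)|\lesssim q^{-(s-1/2)}|\hat{v}|_{H^s(\widehat{I})}$, then rescaling to get the stated $(b-a)^{\ts-1/2}/q^{s-1/2}$ factor.

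The only genuinely delicate step is \eqref{EQN::ESTIMATE-PI-TIME-BOUNDARY}: the trace at an endpoint cannot be controlled by an $L^2$-orthogonal projection bound alone, so one must either invoke a sharp Legendre tail estimate for the pointwise evaluation functional at $\pm 1$ (using the Christoffel--Darboux-type bounds for Legendre polynomials) or carry out a duality argument exploiting the $H^{1/2+\epsilon}$-regularity of the trace map. Everything else is bookkeeping of scaling and standard one-dimensional spectral approximation, so I would lean on the references \cite{Houston_Schwab_Suli_2002,Canuto_Quarteroni_1982,Warburton_Hesthaven_2003} for the reference-interval bounds and just make the scaling explicit.
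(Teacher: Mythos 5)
The paper offers no proof of this lemma at all---it is quoted from \cite{Houston_Schwab_Suli_2002}, \cite{Canuto_Quarteroni_1982}, and \cite{Warburton_Hesthaven_2003}---so your scaling-to-the-reference-interval strategy is exactly the standard route those references take. Your treatments of \eqref{EQN::ESTIMATE-PI-TIME-VOLUME}, \eqref{EQN::ESTIMATE-PI-TIME-L2-Linfty}, and \eqref{EQN::ESTIMATE-PI-TIME-POLYNOMIAL-TRACE} are fine (for \eqref{EQN::ESTIMATE-PI-TIME-L2-Linfty} you do not even need the triangle inequality: orthogonality gives $\Norm{v-\pi_q^t v}{L^2}\le\Norm{v}{L^2}$ directly), and you correctly single out \eqref{EQN::ESTIMATE-PI-TIME-BOUNDARY} as the delicate endpoint-superconvergence estimate; the Legendre-tail argument you sketch is the one used in the cited literature. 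Note also that a cheaper route to \eqref{EQN::ESTIMATE-PI-TIME-BOUNDARY} is available once the $L^2$- and $H^1$-error bounds are in hand: apply the multiplicative trace inequality \eqref{EQN::TRACE-INEQUALITY-CONTINUOUS-1D} to $v-\pi_q^t v$ and balance the two terms.

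There is, however, a genuine gap in your argument for \eqref{EQN::ESTIMATE-PI-TIME-INVERSE}. The identity $v-\pi_q^t v=(I-\pi_q^t)(v-\pi_q^t v)$ is vacuous (it only restates that $\pi_q^t$ is a projection), and the polynomial inverse inequality $\Norm{\hat p}{H^1(\widehat I)}\lesssim q^2\Norm{\hat p}{L^2(\widehat I)}$ can only be applied to the polynomial $\pi_q^t v$, never to the non-polynomial residual $v-\pi_q^t v$. What your two ingredients actually yield is $\Norm{\pi_q^t v}{H^1(a,b)}\lesssim q^2(b-a)^{-1}\Norm{v}{L^2(a,b)}$; to pass from this to the left-hand side of \eqref{EQN::ESTIMATE-PI-TIME-INVERSE} by the triangle inequality you would also need to control $\Norm{v'}{L^2(a,b)}$ by $\Norm{v}{L^2(a,b)}$, which is impossible for general $v\in H^1(a,b)$. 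Indeed, the inequality as literally stated fails: on $(0,1)$ with $q=1$, take $v(t)=\varepsilon\sin(2\pi Nt)$; then $\Norm{v}{L^2}$ is bounded while $\SemiNorm{v-\pi_1^t v}{H^1}\gtrsim N\varepsilon\to\infty$. So no proof can close here; the provable (and presumably intended) statement is the $H^1$-stability bound $\Norm{\pi_q^t v}{H^1(a,b)}\lesssim q^2(b-a)^{-1}\Norm{v}{L^2(a,b)}$, and you should either prove that version or flag the discrepancy rather than assert the estimate as written.
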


Let~$q$ be an integer with~$q \geq 1$. The multiplicative trace inequality in one dimension reads:
\begin{equation*}
|v(a)|^2 + |v(b)|^2 \le (b - a)^{-1} \Norm{v}{L^2(a, b)}^2 + 2 \Norm{v}{L^2(a, b)} \Norm{v'}{L^2(a, b)} \qquad \forall v \in H^1(a, b),
\end{equation*}
which, together with the Young inequality, implies
\begin{equation}
\label{EQN::TRACE-INEQUALITY-CONTINUOUS-1D}
|v(a)|^2 + |v(b)|^2 \le \frac{q + 1}{b - a} \Norm{v}{L^2(a, b)}^2 + \frac{b - a}{2q} \Norm{v'}{L^2(a, b)}^2 \qquad \forall v \in H^1(a, b).
\end{equation}

\begin{theorem}[Error estimates in the energy norms]
\label{THM::ERROR-ENERGY-QK}
Let~$\Vp(\Th)$ be chosen as in~\eqref{DEF::QK}, and~$\Mp(\Th)$ be such that Assumption~\ref{ASM::COMPATIBILITY-CONDITION} holds.
Let also Assumptions~\ref{ASSUMPTION::PRISMATIC-MESH} and 
\ref{ASM::COVERING} on the space--time mesh~$\Th$ be satisfied, and the stabilization function~$\eta_F$ be given by~\eqref{EQN::STABILIZATION-TERM}. 
Assume that the exact solution~$u$ to the continuous weak formulation~\eqref{EQN::CONTINUOUS-WEAK-FORMULATION} satisfies: $u \in X$, and, for each~$K = \Kx \times \Kt \in \Th$, $u_{|_K}$ belongs to
\begin{equation*}
L^2(\Kt; H^{l_K}(\Kx)) \cap H^1(\Kt; H^{l_K - 1}(\Kx)) \cap H^{s_K}(\Kt; L^2(\Kx)) \cap H^{s_K - 1}(\Kt; H^1(\Kx)) \cap H^{\vartheta_K}(\Kt; H^2(\Kx)), 
\end{equation*}
where~$s_K > 3/2$, $l_K > 3/2$, and~$\vartheta_K = \max\{s_K - 2, 0\}$. 
Let~$\uh \in \Vp(\Th)$ be the solution to the space--time LDG formulation~\eqref{EQN::REDUCED-VARIATIONAL-DG}. Then, the following estimates hold:
\begin{alignat}{2}
\label{EQN::ERROR-LDGN-QK}
\Tnorm{u - \uh}{\LDGN}^2 & \lesssim \sum_{K \in \Th} \frac{\hKx^{2\ellK - 2}}{\pK^{2 l_K- 3}} \mathfrak{A}_K + \sum_{K \in \Th} \frac{\hKt^{2\tsK - 2}}{\pK^{2 s_K - 3}} \mathfrak{B}_K,
\\
\label{EQN::ERROR-LDGp-QK}
\Tnorm{u - \uh}{\LDGp}^2 & \lesssim \sum_{K \in \Th} \frac{\hKx^{2\ellK - 2}}{\pK^{2 l_K- 3}} \mathfrak{C}_K + \sum_{K \in \Th} \frac{\hKt^{2\tsK - 2}}{\pK^{2 s_K - 3}} \mathfrak{D}_K,
\end{alignat}
where~$\ell_K := \min\{\pK + 1,\, l_K\}$ and~$\tsK := \min\{\pK + 1,\, s_K\}$ for all~$K \in \Th$, and 
\begin{alignat*}{3}
\mathfrak{A}_K & := 
\Big(\frac{1}{\pK} + \frac{\hKx^2}{\hKt \pK^2} + \frac{\hpK^2}{\pK^2} \cdot \frac{\hKx^2}{\hKt \hhKt} + \big(\max_{F \in \FKtime} \eta_F\big) \frac{\hKx}{\pK^2} \Big) \Norm{\frakEx u}{L^2(\Kt; H^{l_K}(\calKx))}^2 \\
\nonumber
& \quad + \left(\frac{1}{\pK} + \frac{\hKt}{\pK^2} + \frac{\hpK^2}{\pK^2} \cdot \frac{\hKt}{\hhKt} \right)\Norm{\frakEx u}{H^1(\Kt; H^{l_K - 1}(\calKx))}^2 + \Big( \frac{1}{\pK} + \frac{1}{\pK^2}\Big) \Norm{\frakEx \nablax u}{L^2(\Kt; H^{l_K - 1}(\calKx)^d)}^2, \\
\mathfrak{B}_K & :=  \Big(\frac{\hKt}{\pK^2} + \frac{1}{\pK} + \frac{\hKt}{\hhKt} \cdot \frac{\hpK^2}{\pK^2} + \big(\max_{F \in \FKtime} \eta_F \big) \frac{\hKt^2}{\hKx \pK^2 }\Big) \Norm{u}{H^{s_K}(\Kt; L^2(\Kx))}^2   \\
\nonumber
& \quad + \Big(\frac{1}{\pK} + \big(\max_{F \in \FKtime} \eta_F \big) \frac{\hKx}{\pK^2} + \frac{1}{\pK^2}\Big) \Norm{u}{H^{s_K - 1}(\Kt; H^1(\Kx))}^2  \\
& \quad + \left(\frac{\hKx^2}{\hKt \pK^2} + \frac{\hKx^2}{\hKt^2 \pK} \right)\Norm{\frakEx u}{H^{s_K- 1}(\Kt; H^1(\calKx))}^2 + \frac{\hKx^2 \hKt^{2\theta_K}}{\hKt^{2\tsK - 2}} \cdot \frac{\pK^{2 s_K - 3}}{\pK^{2\vartheta_K + 3}} \Norm{u}{H^{\vartheta_K}(\Kt; H^2(\Kx))}^2, \\
\mathfrak{C}_K & := 
\Big( \frac{1}{\pK} + \frac{\hKx^2}{\hKt \pK^2} + \big(\max_{F \in \FKtime} \eta_F\big) \frac{\hKx}{\pK^2} + \frac{\hKx^2}{\hhKt}\cdot\frac{\hpK^2}{\pK^3}\Big) \Norm{\frakEx u}{L^2(\Kt; H^{l_K}(\calKx))}^2 \\
\nonumber
& \quad + \Big(\frac{\hKt}{\pK^3} + \frac{\hKt}{\pK^2} \Big) \Norm{\frakEx u}{H^1(\Kt; H^{l_K - 1}(\calKx))}^2 + \Big( \frac{1}{\pK} + \frac{1}{\pK^2}\Big) \Norm{\frakEx \nablax u}{L^2(\Kt; H^{l_K - 1}(\calKx)^d)}^2, \\
\nonumber
\mathfrak{D}_K & := \Big(\frac{\hKt}{\pK^2} + \frac{1}{\pK} + \big(\max_{F \in \FKtime} \eta_F \big) \frac{\hKt^2}{\hKx \pK^2} + \frac{\hKt^2}{\hhKt} \cdot \frac{\hpK^2}{\pK^3} +\frac{\hKt}{\pK^3} \Big) \Norm{u}{H^{s_K}(\Kt; L^2(\Kx))}^2   \\
\nonumber
& \quad + \Big(\frac{1}{\pK} + \big(\max_{F \in \FKtime} \eta_F \big) \frac{\hKx}{\pK^2} + \frac{1}{\pK^2}\Big) \Norm{u}{H^{s_K - 1}(\Kt; H^1(\Kx))}^2  \\
& \quad + \left(\frac{\hKx^2}{\hKt \pK^2} +\frac{\hKx^2}{\hKt \pK^3} \right)\Norm{\frakEx u}{H^{s_K- 1}(\Kt; H^1(\calKx))}^2 + \frac{\hKx^2 \hKt^{2\theta_K}}{\hKt^{2\tsK - 2}} \cdot \frac{\pK^{2 s_K- 3}}{\pK^{2\vartheta_K + 3}} \Norm{u}{H^{\vartheta_K}(\Kt; H^2(\Kx))}^2,
\end{alignat*}
with $\theta_K := \min\{\pK + 1,\, \vartheta_K\}$.
\end{theorem}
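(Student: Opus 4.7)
The plan is to apply the abstract \emph{a priori} error bounds of Theorem~\ref{THM::ERROR-BOUND-ENERGY}, so the task reduces to (i) choosing a good candidate~$\vh \in \Vp(\Th)$ and bounding the best-approximation errors in the norms~$\Tnorm{\cdot}{\LDGN}$, $\Tnorm{\cdot}{\LDGp}$, and~$\Tnorm{\cdot}{\LDGs}$, and (ii) bounding the inconsistency term via Lemma~\ref{LEMMA::INCONSISTENCY-BOUND}. The natural choice is the composition~$\vh := \pi_{\pK}^t \bigl(\TPix u\bigr)$ applied elementwise, where~$\TPix$ is the quasi-interpolant from Lemma~\ref{LEMMA::ESTIMATE-TPI} and~$\pi_{\pK}^t$ is the~$L^2$-orthogonal projection in time onto~$\Pp{\pK}{\Kt}$ from Lemma~\ref{LEMMA::ESTIMATES-PI-TIME}. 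Because the two projections act in orthogonal directions and both commute with their respective differentiations in a standard way, $\vh \in \IQ^{\pK}(K)$ for each~$K \in \Th$, and the tensor-product decomposition
\[
u - \vh = (u - \TPix u) + (\TPix u - \pi_{\pK}^t \TPix u)
\]
lets me bound each contribution to the three norms by estimates in space and in time separately.

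First I would bound the pieces of~$\Tnorm{u - \vh}{\LDG}^2 = \Norm{\sqrt{\bk}\nablaxLDG (u - \vh)}{L^2(\QT)^d}^2 + \Norm{\eta_F^{1/2}\jump{u-\vh}_\bN}{L^2(\Ftime)^d}^2 + \Norm{\eta_F^{1/2}(u-\vh)}{L^2(\FD)}^2$. Since~$u \in Y$ implies~$\jump{u}_\bN = 0$ and~$u_{|\FD} = 0$, the facet terms reduce to jumps/traces of~$\TPix u - \pi^t_{\pK}\TPix u$ and of~$u - \TPix u$, which I control by the trace estimates~\eqref{EQN::ESTIMATE-TPI-BOUNDARY} and~\eqref{EQN::TRACE-INEQUALITY-CONTINUOUS-1D}, combined with the definition of~$\eta_F$ in~\eqref{EQN::STABILIZATION-TERM} (which produces the factors~$(\max_{F}\eta_F)\hKx/\pK^2$ and~$(\max_F\eta_F)\hKt^2/(\hKx\pK^2)$ in~$\mathfrak{A}_K, \mathfrak{B}_K, \mathfrak{C}_K, \mathfrak{D}_K$). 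For the volume gradient I split~$\nablaxLDG = \nablax - \Lu$, bound~$\nablax(u-\vh)$ by~\eqref{EQN::ESTIMATE-TPI-VOLUME} (in~$H^1$ in space) and~\eqref{EQN::ESTIMATE-PI-TIME-VOLUME} (in~$L^2$ in time) elementwise, and bound~$\Lu(u-\vh)$ by Lemma~\ref{LEMMA::BOUND-Lh} together with the facet bounds just obtained. The upwind-jump functional~$\SemiNorm{u-\vh}{\J}$ involves only traces on space-like facets, where the temporal trace inequality~\eqref{EQN::ESTIMATE-PI-TIME-BOUNDARY} provides the~$\hKt^{2\tsK-1}/\pK^{2s_K-1}$ factor, and the spatial trace inequality~\eqref{EQN::ESTIMATE-TPI-BOUNDARY} provides the analogous spatial factor (with the cross contributions of the stability term in Lemma~\ref{LEMMA::ESTIMATES-PI-TIME} for~$\pi^t_{\pK}\TPix u$).

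For~\eqref{EQN::ERROR-LDGN-QK}, the new ingredient is the term~$\Tnorm{\Nh(u-\vh)}{\LDG}$ appearing in~\eqref{EQN::DG-NORMS-3}, which I bound using Proposition~\ref{PROP::BOUND-Nh-LDG}. This produces three contributions: the elementwise~$\Norm{\dpt(u-\vh)}{L^2(K)}$, bounded by combining~\eqref{EQN::ESTIMATE-PI-TIME-INVERSE}/\eqref{EQN::ESTIMATE-PI-TIME-VOLUME} with~\eqref{EQN::ESTIMATE-TPI-VOLUME}--\eqref{EQN::ESTIMATE-TPI-STABILITY} (hence the~$H^1(\Kt; H^{l_K-1}(\calKx))$ and~$H^{s_K}(\Kt; L^2)$ seminorms in~$\mathfrak{A}_K$ and~$\mathfrak{B}_K$); and the space-like facet terms~$\Norm{\jump{\lambdah^{-1/2}(u-\vh)}_t}{L^2(\Fspa)}$ and~$\Norm{\lambdah^{-1/2}(u-\vh)}{L^2(\FO)}$, for which the weight~$\lambdah^{-1/2}$ produces the~$\hpK^2/\pK^2 \cdot 1/\hhKt$ factors visible in the neighbouring-element contributions in~$\mathfrak{A}_K$, $\mathfrak{B}_K$. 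For~\eqref{EQN::ERROR-LDGp-QK}, I apply the second bound of Theorem~\ref{THM::ERROR-BOUND-ENERGY}, which requires separate estimates of~$\Tnorm{u-\vh}{\LDGp}$ and~$\Tnorm{u-\vh}{\LDGs}$: the extra~$\lK\Norm{\dpt(u-\vh)}{L^2(K)}^2$ term in~\eqref{EQN::DG-NORMS-4} is bounded exactly as above but with an extra~$\lK \simeq \hKt/\pK^2$ factor, and the~$\lK^{-1}\Norm{u-\vh}{L^2(K)}^2$ plus trace terms in~\eqref{EQN::DG-NORMS-5} are handled by~\eqref{EQN::ESTIMATE-PI-TIME-VOLUME}--\eqref{EQN::ESTIMATE-PI-TIME-BOUNDARY} together with the spatial estimates. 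In both cases, the inconsistency term~$|\Rh(u,\vh)|/\Tnorm{\vh}{\cdot}$ is bounded by Lemma~\ref{LEMMA::INCONSISTENCY-BOUND}: I write~$\nabla_{\bx,h} u - \PiO\nabla_{\bx,h} u = (\nabla_{\bx,h} u - \nabla_{\bx,h}\vh) - \PiO(\nabla_{\bx,h} u - \nabla_{\bx,h}\vh)$ (exploiting~$\vh \in \Vp(\Th) \subset \Mp(\Th)$-compatibility through Assumption~\ref{ASM::COMPATIBILITY-CONDITION}), so the bound reduces to the same facet estimates already computed, contributing to the spatial~$H^{l_K-1}$ seminorm of~$\nablax u$ appearing in~$\mathfrak{A}_K$ and~$\mathfrak{C}_K$ (weighted by~$\eta_F^{-1}$, giving the~$1/\pK + 1/\pK^2$ factors there).

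The main obstacle is the bookkeeping: each individual ingredient (projection error, trace inequality, inverse estimate) is standard, but I must track all of the~$\hKx,\hKt,\hhKt,\pK,\hpK$ factors consistently, in particular the cross-element contributions coming from~$\lK = \hhKt/\hpK^2$ on the neighbouring elements in~$\Tnorm{\Nh(u-\vh)}{\LDG}$ and in the~$\lambdah^{-1/2}$-weighted facet norms, and I must carefully distinguish the role of~$l_K$ versus~$s_K$ regularity. The least mechanical step is verifying that the highest-time-regularity term (with~$H^{\vartheta_K}(\Kt;H^2)$-seminorm) arises only through the spatial Laplacian acting on~$\pi^t_{\pK}(u-\TPix u)$ via~\eqref{EQN::ESTIMATE-TPI-VOLUME} with~$q = 2$; this is the source of the final term in both~$\mathfrak{B}_K$ and~$\mathfrak{D}_K$, and the case distinction~$\vartheta_K = \max\{s_K-2,0\}$ must be handled by choosing the~$q = 2$ estimate only when~$s_K \geq 2$.
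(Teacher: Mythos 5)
Your proposal follows essentially the same route as the paper's own proof in Appendix~\ref{APP::PROOF-ENERGY-QK}: the same candidate interpolant~$\vh{}_{|_K} = \pi_{\pK}^t \TPix u_{|_K}$, the same reduction through Theorem~\ref{THM::ERROR-BOUND-ENERGY}, Lemma~\ref{LEMMA::INCONSISTENCY-BOUND}, Lemma~\ref{LEMMA::BOUND-Lh}, and Proposition~\ref{PROP::BOUND-Nh-LDG}, and the same elementwise splitting of the error into spatial and temporal projection contributions controlled by Lemmas~\ref{LEMMA::ESTIMATE-TPI} and~\ref{LEMMA::ESTIMATES-PI-TIME}. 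The only minor slip is your attribution of the~$H^{\vartheta_K}(\Kt;H^2(\Kx))$ term to estimate~\eqref{EQN::ESTIMATE-TPI-VOLUME} with~$q=2$: in the paper it actually arises from the temporal projection error of~$\nablax u$ measured in the spatial~$H^1$ norm inside the inconsistency bound, which does not affect the validity of your argument.
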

\begin{proof} 
For the sake of clarity, we postpone the proof of this theorem to Appendix~\ref{APP::PROOF-ENERGY-QK}.
\end{proof}

\begin{remark}[Conditions on the mesh and the degree vector]
The error estimate~\eqref{EQN::ERROR-LDGN-QK} in the norm $\Tnorm{\cdot}{\LDGN}$ suggests the need of the following local quasi-uniformity conditions:
\begin{equation}
\label{EQN::LOCAL-QUASI-UNIFORMITY}
\pK \simeq p_{K'}, \quad \hKt \simeq h_{K'_t}, \quad \hKx \simeq h_{K'_{\bx}} \qquad \text{ for all } K, K' \in \Th \text{ sharing a time-like facet},
\end{equation}
and the orthotropic scaling
\begin{equation}
\label{EQN::ORTHOTROPIC-RELATION}
\hKt \simeq \hKx \qquad \forall K = \Kx \times \Kt \in \Th.
\end{equation}

On the other hand, the error estimate~\eqref{EQN::ERROR-LDGp-QK} in the norm~$\Tnorm{\cdot}{\LDGp}$ suggests that the orthotropic scaling~\eqref{EQN::ORTHOTROPIC-RELATION} can be relaxed to
\begin{equation}
\label{EQN::RELAXED-ORTHOTROPIC-TENSOR}
\hKx^2 \lesssim \hKt \lesssim \hKx \qquad \forall K = \Kx \times \Kt \in \Th,
\end{equation}
as the last two terms on the right-hand side of~\eqref{EQN::ERROR-LDGp-QK} can be improved by requiring some extra regularity on the continuous solution~$u$.

In the absence of hanging time-like facets, the condition~$\hKx^2 \lesssim \hKt$ in~\eqref{EQN::RELAXED-ORTHOTROPIC-TENSOR} can be further relaxed using the analysis in~\cite[\S5.2]{Ern_Schieweck:2016}, which is based on some composed lifting and projection operators.
Moreover, the condition~$\hKt \lesssim \hKx$ in~\eqref{EQN::RELAXED-ORTHOTROPIC-TENSOR} is a consequence of the fact that, for non-uniform degrees of approximation or meshes with hanging time-like facets, it does not hold that~$\jump{\pi_{p}^t v}_{\sf N} = 0$ on~$\Ftime$.
\eremk
\end{remark}

\begin{corollary}
Let the hypotheses of Theorem \ref{THM::ERROR-ENERGY-QK} hold and assume also uniform
elemental polynomial degrees $p_K=p\ge1$ for all $K\in\Th$, the local quasi-uniformity condition \eqref{EQN::LOCAL-QUASI-UNIFORMITY}, and the orthotropic scaling
\eqref{EQN::ORTHOTROPIC-RELATION}. If the continuous solution~$u$ to~\eqref{EQN::CONTINUOUS-WEAK-FORMULATION} belongs to~$L^2(0, T; H^l(\Omega)) \cap H^1(0, T; H^{l - 1}(\Omega)) \cap H^s(0, T; L^2(\Omega)) \cap H^{s - 1}(0, T; H^1(\Omega)) \cap H^{\max\{s - 2, 0\}}(0, T; H^2(\Omega))$ for~$s > 3/2$ and~$l > 3/2$, then the following estimates hold:
\begin{alignat*}{2}
\nonumber
\Tnorm{u - \uh}{\LDGN} & \lesssim  \frac{h^{\ell - 1}}{p^{l - \frac32}} \big( \Norm{u}{L^2(0, T; H^{l}(\Omega))} + \Norm{u}{H^1(0, T; H^{l - 1}(\Omega))} \big) \\
\nonumber
& \quad + \frac{h^{\ts - 1}}{p^{s- \frac32}} \big( \Norm{u}{H^{s}(0, T; L^2(\Omega))}  +  \Norm{u}{H^{s - 1}(0, T; H^1(\Omega))} + \Norm{u}{H^{\vartheta}(0, T; H^2(\Omega))} \big),\\
\nonumber
\Tnorm{u - \uh}{\LDGp} 
& \lesssim \frac{h^{\ell - 1}}{p^{l- \frac32}} 
 \big(\Norm{u}{L^2(0,T; H^{l}(\Omega))} + \Norm{u}{H^1(0, T; H^{l - 1}(\Omega))} \big) \\
 & \quad  + \frac{h^{\ts - 1}}{p^{s - \frac32}} \left(  \Norm{u}{H^{s}(0,T; L^2(\Omega))}  + \Norm{u}{H^{s - 1}(0,T; H^1(\Omega))}+\Norm{u}{H^{\vartheta}(0, T; H^2(\Omega))} \right),
\end{alignat*}
where~$h = \max_{K\in\Th} \hK$, $\ts = \min\{p + 1, s\}$, $\ell = \min\{p + 1, l\}$, and~$\vartheta = \max\{s- 2, 0\}$.
\end{corollary}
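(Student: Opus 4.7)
The plan is to specialize the bounds~\eqref{EQN::ERROR-LDGN-QK}--\eqref{EQN::ERROR-LDGp-QK} of Theorem~\ref{THM::ERROR-ENERGY-QK} under the additional hypotheses and to collapse the resulting element contributions into global Bochner norms. Under $\pK = p$ and the local quasi-uniformity~\eqref{EQN::LOCAL-QUASI-UNIFORMITY}, I have $\hpK \simeq p$ and $\hhKt \simeq \hKt$ for every $K \in \Th$; the orthotropic scaling~\eqref{EQN::ORTHOTROPIC-RELATION} further gives $\hKx \simeq \hKt \simeq \hK$, and the definition~\eqref{EQN::STABILIZATION-TERM} yields $\max_{F \in \FKtime} \eta_F \lesssim p^2/\hK$. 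Substituting these equivalences into each coefficient of $\mathfrak{A}_K$ and $\mathfrak{B}_K$ shows that all of them are uniformly bounded, with several providing additional decay in $\hK$ or $p^{-1}$. The only term requiring care is the last one in $\mathfrak{B}_K$ (and, analogously, in $\mathfrak{D}_K$): after multiplying by the outer prefactor $\hKt^{2\tsK-2}/p^{2s_K-3}$ it reduces to $\hK^{2\theta_K+2}/p^{2\vartheta+3}$, which a short case split on $s \le 2$ versus $s > 2$, together with the subcases $p+1 \ge \vartheta$ and $p+1 < \vartheta$, shows to be $\lesssim \hK^{2\ts-2}/p^{2s-3}$ (using $\hK \le 1$ and $p \ge 1$).

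With every coefficient controlled, I bound each $\hK$ appearing in front of the element norms by $h = \max_{K\in\Th}\hK$, factoring the rates $h^{2\ell-2}/p^{2l-3}$ and $h^{2\ts-2}/p^{2s-3}$ outside the sums. The leftover element sums split into two classes. Sums of local Sobolev or Bochner norms of $u$ over $K$ directly collapse into the global norms $\Norm{u}{L^2(0,T;H^l(\Omega))}^2$, $\Norm{u}{H^1(0,T;H^{l-1}(\Omega))}^2$, $\Norm{u}{H^s(0,T;L^2(\Omega))}^2$, $\Norm{u}{H^{s-1}(0,T;H^1(\Omega))}^2$, and $\Norm{u}{H^{\vartheta}(0,T;H^2(\Omega))}^2$. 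Sums over the covering $\Tc$ of norms of the Stein extension $\frakEx u$ are handled by combining its stability~\eqref{EQN::STEIN} with the finite-overlap property of Assumption~\ref{ASM::COVERING}, which yields
$\sum_K \Norm{\frakEx u}{L^2(\Kt;H^r(\calKx))}^2 \lesssim N_{\QT}\, \Norm{u}{L^2(0,T;H^r(\Omega))}^2$
and analogous bounds for the remaining mixed Bochner norms appearing in $\mathfrak{A}_K$ and $\mathfrak{C}_K$.

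Taking square roots then delivers the bound on $\Tnorm{u-\uh}{\LDGN}$; the estimate on $\Tnorm{u-\uh}{\LDGp}$ follows from the same two steps applied verbatim with $\mathfrak{A}_K, \mathfrak{B}_K$ replaced by $\mathfrak{C}_K, \mathfrak{D}_K$. The main obstacle is purely bookkeeping: checking that each of the many coefficients in $\mathfrak{A}_K, \dots, \mathfrak{D}_K$ remains uniformly bounded after the substitutions, and in particular that the $\theta_K$-term in $\mathfrak{B}_K$ and $\mathfrak{D}_K$ does not spoil the advertised convergence rates.
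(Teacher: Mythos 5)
Your proposal is correct and follows essentially the same route as the paper, which states this corollary as a direct specialization of Theorem~\ref{THM::ERROR-ENERGY-QK} without a separate written proof: substitute $\pK=p$, $\hpK\simeq p$, $\hhKt\simeq\hKt\simeq\hKx\simeq\hK$, and $\eta_F\simeq p^2/\hK$ into the coefficients $\mathfrak{A}_K,\dots,\mathfrak{D}_K$, verify they are uniformly bounded (including the case split for the $\theta_K$-term), and collapse the element sums into global Bochner norms via the Stein extension stability~\eqref{EQN::STEIN} and the finite-overlap property of Assumption~\ref{ASM::COVERING}. Your bookkeeping, in particular the verification that $\hK^{2\theta_K+2}/p^{2\vartheta+3}\lesssim \hK^{2\ts-2}/p^{2s-3}$, is exactly the check needed and is carried out correctly.
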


\subsection{Standard polynomials \label{SECT::STANDARD-POLY}}
We now consider the following piecewise polynomial space:
\begin{equation}
\label{DEF::PK}
\VpK(K) = \Pp{\pK}{K} \qquad \forall K \in \Th.
\end{equation}

In order to derive \emph{a priori} error estimates for the standard polynomial space in~\eqref{DEF::PK}, we assume that the elements in the family~$\{\Th\}_{h > 0}$ of space--time meshes satisfy the orthotropic relation~\eqref{EQN::ORTHOTROPIC-RELATION}.

\begin{assumption}[Orthotropic space--time elements]
\label{ASM::SHAPE-REGULARITY}
The family~$\{\Th\}_{h > 0}$ of prismatic space--time meshes satisfies
\begin{equation*}
        \hKx \simeq \hKt,
\end{equation*}
uniformly for all~$K \in \Th$.
\end{assumption}

We denote by~$\frakE$ the space--time version in~\cite[Thm. 5.4]{Cangiani_Dong_Georgoulis:2017} of the Stein's extension operator.
Moreover, in next Lemma, we recall the approximation results from~\cite[Lemma~5.5]{Cangiani_Dong_Georgoulis:2017}.

\begin{lemma}[Estimates of~$\TPi$]
\label{LEMMA::ESTIMATE-TPI-XT}
Let Assumptions~\ref{ASSUMPTION::PRISMATIC-MESH}, \ref{ASM::COVERING}, and \ref{ASM::SHAPE-REGULARITY} 
hold. Let also~$\Th \in \{\Th\}_{h > 0}$ and~$\Tc$ be its corresponding covering from Assumption~\ref{ASM::COVERING}.
For any~$K \in \Th$ and~$v_{|_K} \in H^{l_K}(K)$ ($l_K > \frac12$), there exists~$\TPi v_{|_K} \in \Pp{\pK}{K}$, such that
\begin{subequations}
\begin{alignat*}{3}
\Norm{v - \TPi v}{H^q(K)} & \lesssim \frac{\hK^{\ellK - q} }{\pK^{l_K- q} }\Norm{\frakE v}{H^{l_K}(\calK)} & & \quad \text{ for all } 0 \le q \le l_K, \\
\Norm{v - \TPi v}{L^2(\partial K)} & \lesssim \frac{\hK^{\ellK - \frac12}}{\pK^{l_K - \frac12}} \Norm{\frakE v}{H^{l_K}(\calK)},
\end{alignat*}
\end{subequations}
where~$\ellK = \min\{\pK + 1, l_K\}$.
\end{lemma}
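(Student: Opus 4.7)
The plan is to transfer the problem from the general prismatic element $K$, which may have many facets and non-standard geometry, to the shape-regular prismatic covering element $\calK \in \Tc$ via the Stein extension operator $\frakE$, apply well-known $hp$-approximation results on the shape-regular element, and then restrict back to $K$. This is the same strategy used in~\cite{Cangiani_Dong_Georgoulis:2017} and extends naturally to space--time prismatic meshes.

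First, I would fix $K \in \Th$ and select the covering element $\calK \in \Tc$ with $K \subset \calK$ guaranteed by Assumption~\ref{ASM::COVERING}. Since by construction $\calK$ is a shape-regular $(d+1)$-dimensional prism whose base is a simplex or hypercube, classical $hp$-approximation theory (see, e.g., \cite[Lemma 23]{Cangiani_Dong_Georgoulis_Houston_2017} or the original references of Babu\v{s}ka--Suri) provides a polynomial projector $\Pi^{\calK}_{\pK}:H^{l_K}(\calK) \to \Pp{\pK}{\calK}$ satisfying, for all $0 \le q \le l_K$,
\begin{equation*}
\Norm{w - \Pi^{\calK}_{\pK} w}{H^q(\calK)} \lesssim \frac{\hcalK^{\ellK - q}}{\pK^{l_K - q}} \Norm{w}{H^{l_K}(\calK)} \qquad \forall w \in H^{l_K}(\calK).
\end{equation*}
I would then define $\TPi v{}_{|_K} := (\Pi^{\calK}_{\pK} \frakE v){}_{|_K}$, which clearly belongs to $\Pp{\pK}{K}$.

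For the volumetric estimate, applying the bound above to $w = \frakE v$ together with the extension property~\eqref{EQN::STEIN} (which yields $(\frakE v){}_{|_K} = v$ and preserves the Sobolev norm), and the fact that $\hcalK \lesssim \hK$ by Assumptions~\ref{ASM::COVERING} and~\ref{ASM::SHAPE-REGULARITY} (which ensures orthotropic scaling so that the covering diameter is comparable to $\hK$), gives the desired result by monotonicity of $H^q$-norms on $K \subset \calK$. For the boundary estimate, I would apply the multiplicative trace inequality on the shape-regular element $\calK$ (see, e.g., \eqref{EQN::TRACE-INEQUALITY-CONTINUOUS}), followed by the volumetric bound for $q = 0$ and $q = 1$ combined with the Young inequality, which produces the factor $\hK^{\ellK - 1/2}/\pK^{l_K - 1/2}$. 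A subtlety is that $\partial K$ may not be a subset of $\partial \calK$; however, the trace inequality can be applied on a neighborhood of $\partial K$ inside $\calK$ thanks to Assumption~\ref{ASSUMPTION::PRISMATIC-MESH} on subtriangulability and shape-regularity of the covering prisms.

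The main obstacle is not the approximation theory itself, but rather ensuring that all the geometric constants (hidden in $\lesssim$) remain independent of the number of facets of $K$ and of the discretization parameters. This is precisely the role played jointly by Assumption~\ref{ASSUMPTION::PRISMATIC-MESH} (providing uniform subtriangulations with a controlled constant $C_s$), Assumption~\ref{ASM::COVERING} (bounding the overlap of the covering), and Assumption~\ref{ASM::SHAPE-REGULARITY} (ensuring $\hKx \simeq \hKt$ so that the anisotropy of $K$ matches that of $\calK$). Since the space--time element is anisotropic in general but the orthotropic scaling assumption restores isotropy up to equivalence, the standard isotropic $hp$-results transfer without modification.
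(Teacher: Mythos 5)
The paper does not prove this lemma at all: it is recalled verbatim from \cite[Lemma~5.5]{Cangiani_Dong_Georgoulis:2017} (itself built on \cite{Cangiani_Dong_Georgoulis_Houston_2017}), and your extension--covering--restriction argument is precisely the standard proof underlying that cited result. Your sketch is correct, including the two genuinely delicate points (the trace estimate on $\partial K$ via the non-overlapping simplices of Assumption~\ref{ASSUMPTION::PRISMATIC-MESH} so that the constant stays independent of the number of facets, and the role of Assumption~\ref{ASM::SHAPE-REGULARITY} in making $\hcalK\lesssim\hK$ and restoring isotropy), so there is nothing to add beyond the minor caveat that for $\tfrac12<l_K<1$ the multiplicative trace inequality must be applied in its fractional-order form.
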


\begin{theorem}[Error estimates in the energy norms]
\label{THM::ERROR-ENERGY-PK}
Let~$\Vp(\Th)$ be chosen as in~\eqref{DEF::PK}, and~$\Mp(\Th)$ be such that Assumption~\ref{ASM::COMPATIBILITY-CONDITION} holds.
Let also Assumptions~\ref{ASSUMPTION::PRISMATIC-MESH}, \ref{ASM::COVERING}, and~\ref{ASM::SHAPE-REGULARITY} on the space--time mesh~$\Th$ be satisfied, and the stabilization function~$\eta_F$ be given by~\eqref{EQN::STABILIZATION-TERM}. 
Assume that the exact solution~$u$ to the continuous weak formulation~\eqref{EQN::CONTINUOUS-WEAK-FORMULATION} satisfies: $u \in X$ and~$u_{|_K} \in H^{l_K}(K)$ ($l_K > 3/2$),  for each~$K \in \Th$, and let~$\uh \in \Vp(\Th)$ be the solution to the space--time LDG formulation~\eqref{EQN::REDUCED-VARIATIONAL-DG}. Then, the following estimates hold:
\begin{alignat}{2}
\label{EQN::ERROR-LDGN-PK}
\Tnorm{u - \uh}{\LDGN}^2  \lesssim & \sum_{K \in \Th} \frac{\hK^{2\ellK - 2}}{\pK^{2 l_K - 3}} \mathfrak{G}_K,\\
\label{EQN::ERROR-LDGp-PK}
\Tnorm{u - \uh}{\LDGp}^2  \lesssim & \sum_{K \in \Th} \frac{\hK^{2\ellK - 2}}{\pK^{2 l_K- 3}} \mathfrak{H}_K,
\end{alignat}
where~$\ell_K := \min\{\pK + 1, l_K\}$ for each~$K \in \Th$, and
\begin{alignat*}{3}
\mathfrak{G}_K & := \big(\hK\pK^{-2} + \pK^{-1} + (\hK/\hhKt)(\hpK^2/\pK^2) + \big(\max_{F \in \FKtime} \eta_F \big) \frac{\hK}{\pK^{2}} \big) \Norm{\frakE u}{H^{l_K}(\calK)}^2\\
& \quad + \Big( \frac{1}{p_K} + \frac{1}{\pK^3}\Big) \Norm{\frakE \nablax u}{H^{l_K - 1}(\calK)^d}^2, \\
\mathfrak{H}_K & := \big(\hK\pK^{-2} + \pK^{-1} +  \big(\max_{F \in \FKtime} \eta_F \big) \frac{\hK}{\pK^{2}} + \pK^{-3} + \hK\pK^{-3} + (\hK^2/\hhKt)(\hpK^2/\pK^3) \big)\Norm{\frakE u}{H^{l_K}(\calK)}^2
 \\
& \quad + \Big( \frac{1}{p_K} + \frac{1}{\pK^3}\Big) \Norm{\frakE \nablax u}{H^{l_K - 1}(\calK)^d}^2.
\end{alignat*}
\end{theorem}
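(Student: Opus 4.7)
\medskip
\noindent\textbf{Proof proposal for Theorem~\ref{THM::ERROR-ENERGY-PK}.}
The plan is to specialize the abstract error bounds~\eqref{EQ::STRANG_NEWTON} and~\eqref{EQ::STRANG_PLUS} of Theorem~\ref{THM::ERROR-BOUND-ENERGY} to the choice $\vh = \TPi u \in \Vp(\Th)$, where $\TPi$ is the space--time $hp$-quasi-interpolant from Lemma~\ref{LEMMA::ESTIMATE-TPI-XT}, and then estimate each of the resulting quantities elementwise using the hypothesis $u_{|_K} \in H^{l_K}(K)$, the Stein extension property, and the interpolation estimates of Lemma~\ref{LEMMA::ESTIMATE-TPI-XT} combined with the trace inequality~\eqref{EQN::TRACE-INEQUALITY-CONTINUOUS}. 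Throughout, the orthotropic scaling $\hKt\simeq\hKx\simeq\hK$ from Assumption~\ref{ASM::SHAPE-REGULARITY} will be used to combine space and time powers of the meshsize into a single factor $\hK$, which yields the powers appearing in $\mathfrak{G}_K$ and $\mathfrak{H}_K$.

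First I would bound the best-approximation term $\Tnorm{u-\TPi u}{\LDG}$ by expanding~\eqref{EQN::LDG-NORM}: the volume part $\Norm{\sqrt{\bk}\nablaxLDG(u-\TPi u)}{L^2(\QT)^d}$ is controlled by $\Norm{\nablax(u-\TPi u)}{L^2(K)^d}$ (via the $H^1(K)$-estimate of Lemma~\ref{LEMMA::ESTIMATE-TPI-XT}) plus the $L^2$-norm of the lifting $\Lu(u-\TPi u)$, which is in turn bounded by the weighted facet jumps through Lemma~\ref{LEMMA::BOUND-Lh}. The facet jumps on time-like and Dirichlet facets, as well as the jump semi-norm $\SemiNorm{u-\TPi u}{\J}$ on space-like and initial/final facets, are bounded by the boundary estimate $\Norm{u-\TPi u}{L^2(\partial K)}\lesssim \hK^{\ellK-1/2}\pK^{-l_K+1/2}\Norm{\frakE u}{H^{l_K}(\calK)}$ of Lemma~\ref{LEMMA::ESTIMATE-TPI-XT}. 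The weight $\eta_F$ defined in~\eqref{EQN::STABILIZATION-TERM} contributes the factors $\max_{F\in\FKtime}\eta_F\cdot \hK/\pK^2$ appearing in $\mathfrak{G}_K$ and $\mathfrak{H}_K$. This already furnishes the bound~\eqref{EQN::ERROR-LDGN-PK} except for the $\Tnorm{\Nh(u-\TPi u)}{\LDG}$ contribution.

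Next, I would apply Proposition~\ref{PROP::BOUND-Nh-LDG} to reduce the Newton-potential term $\Tnorm{\Nh(u-\TPi u)}{\LDG}$ to a sum of $\Norm{\dpt(u-\TPi u)}{L^2(K)}$ together with the $\lambdah^{-1/2}$-weighted jumps on $\Fspa\cup\FO$, the first handled by the $H^1(K)$-estimate and the second by the boundary estimate of Lemma~\ref{LEMMA::ESTIMATE-TPI-XT}; the neighbor-dependent scaling $\lambdah^{-1}\simeq \hpK^2/\hhKt$ produces the terms $(\hK/\hhKt)(\hpK^2/\pK^2)$ in $\mathfrak{G}_K$ and $(\hK^2/\hhKt)(\hpK^2/\pK^3)$ in $\mathfrak{H}_K$. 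For the $\Tnorm{\cdot}{\LDGp}$-bound I additionally estimate the new term $\lK\Norm{\dpt(u-\TPi u)}{L^2(K)}^2\lesssim \lK\hK^{2(\ellK-1)}\pK^{-2(l_K-1)}\Norm{\frakE u}{H^{l_K}(\calK)}^2$, which contributes $\hK\pK^{-3}$ after using $\lK\simeq\hK/\pK^2$. Likewise, for the intermediate norm $\Tnorm{\cdot}{\LDGs}$ appearing in~\eqref{EQ::STRANG_PLUS}, the extra terms $\lK^{-1}\Norm{u-\TPi u}{L^2(K)}^2$ and the $L^2$-norms on $\Fspa$ and $\FT$ are each controlled by the volume/boundary estimates of Lemma~\ref{LEMMA::ESTIMATE-TPI-XT}, yielding contributions of compatible order.

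Finally, the inconsistency term $\Rh(u,\vh)$ is handled by Lemma~\ref{LEMMA::INCONSISTENCY-BOUND}: since $\PiO$ is the $L^2$-orthogonal projection onto $\Mp(\Th)\supset\Pp{\pK-1}{K}^d$, standard polynomial approximation on $K$ plus the multiplicative trace inequality~\eqref{EQN::TRACE-INEQUALITY-CONTINUOUS} bound $\Norm{\bk\nablax u - \bk\PiO\nablax u}{L^2(\partial K\cap(\Ftime\cup\FD))}^2$ by $\hK^{2\ellK-3}\pK^{-2(l_K-1)}\Norm{\frakE \nablax u}{H^{l_K-1}(\calK)^d}^2$, which, once divided by $\eta_F\simeq \pK^2/\hK$, produces the last contribution $(\pK^{-1}+\pK^{-3})\Norm{\frakE\nablax u}{H^{l_K-1}(\calK)^d}^2$ in both $\mathfrak{G}_K$ and $\mathfrak{H}_K$. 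The main obstacle I anticipate is the careful bookkeeping of the $(\hK,\pK,\hhKt,\hpK,\eta_F)$ factors across the three different norms $\Tnorm{\cdot}{\LDG}$, $\Tnorm{\cdot}{\LDGp}$, $\Tnorm{\cdot}{\LDGs}$, and in particular ensuring that the neighbor-dependent scalings entering through $\lambdah$ and through the weighted averages on $\Ftime$ combine consistently with the choice of $\eta_F$ in~\eqref{EQN::STABILIZATION-TERM}; once this bookkeeping is done, the derivation closely parallels that of Theorem~\ref{THM::ERROR-ENERGY-QK} (whose detailed proof is deferred to Appendix~\ref{APP::PROOF-ENERGY-QK}), with the simplification that a single anisotropic interpolant $\TPi$ replaces the composed operator $\TPix\circ\pi_p^t$.
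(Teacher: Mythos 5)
Your proposal is correct and follows essentially the same route as the paper: choose $\vh = \TPi u$, invoke the Strang-type bounds \eqref{EQ::STRANG_NEWTON}--\eqref{EQ::STRANG_PLUS} together with Lemmas~\ref{LEMMA::BOUND-Lh}, \ref{LEMMA::INCONSISTENCY-BOUND} and Proposition~\ref{PROP::BOUND-Nh-LDG} to reduce everything to the terms $I_1,\dots,I_9$ and $J_1,\dots,J_4$ of the split used for Theorem~\ref{THM::ERROR-ENERGY-QK}, and then estimate each one via Lemma~\ref{LEMMA::ESTIMATE-TPI-XT}, the trace inequalities, and the orthotropic scaling. The only cosmetic difference is in the inconsistency term, where the paper adds and subtracts $\TPi\nablax u$ and uses the $L^2$-stability of $\PiO$ plus the polynomial trace inequality \eqref{EQN::TRACE-INEQUALITY-1} on the projected part, rather than appealing to approximation properties of $\PiO$ directly; the resulting bounds coincide.
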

\begin{proof}
We postpone the proof of this theorem to Appendix~\ref{APP::PROOF-ENERGY-PK}.
\end{proof}

\begin{corollary}
Let the hypotheses of Theorem \ref{THM::ERROR-ENERGY-PK} hold and assume also uniform
elemental polynomial degrees $p_K=p\ge1$ for all $K\in\Th$, the local quasi-uniformity condition \eqref{EQN::LOCAL-QUASI-UNIFORMITY}, and the orthotropic scaling
\eqref{EQN::ORTHOTROPIC-RELATION}. If the continuous solution~$u$ to~\eqref{EQN::CONTINUOUS-WEAK-FORMULATION} belongs to~$ H^l(\QT)$ for~$l > 3/2$, then the following estimates hold:
\begin{alignat}{2}
\nonumber
\Tnorm{u - \uh}{\LDGN}  \lesssim & \frac{h^{\ell - 1}}{p^{l - \frac32}}  \Norm{u}{H^{l}(\QT)}, \\
\nonumber
\Tnorm{u - \uh}{\LDGp}  \lesssim &\frac{h^{\ell - 1}}{p^{l- \frac32}} \Norm{u}{H^{l}(\QT)},
\end{alignat}
where~$h = \max_{K\in\Th} \hK$ and $\ell = \min\{p + 1, l\}$.
\end{corollary}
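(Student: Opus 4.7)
The plan is to derive the corollary directly from Theorem~\ref{THM::ERROR-ENERGY-PK} by simplifying the local indicators $\mathfrak{G}_K$ and $\mathfrak{H}_K$ using the uniform-degree, local quasi-uniformity, and orthotropic-scaling hypotheses, and then summing over the mesh with the aid of the bounded-overlap property of the covering from Assumption~\ref{ASM::COVERING}.

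First, I would exploit the assumptions to reduce every geometric and polynomial factor in $\mathfrak{G}_K$ and $\mathfrak{H}_K$ to a uniform constant. The uniform degree $p_K = p$ gives $\hat{p}_K = p$, while the local quasi-uniformity condition~\eqref{EQN::LOCAL-QUASI-UNIFORMITY} yields $\hat{h}_{K_t} \simeq h_{K_t}$, and the orthotropic scaling~\eqref{EQN::ORTHOTROPIC-RELATION} gives $h_{K_\bx} \simeq h_{K_t} \simeq h_K$. Combined with the definition~\eqref{EQN::STABILIZATION-TERM} of $\eta_F$, these imply the uniform bound $\max_{F \in \FKtime} \eta_F \lesssim p^2/h_K$ (the diffusion tensor enters as a harmless constant). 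Substituting these estimates, every bracketed summand in $\mathfrak{G}_K$ and $\mathfrak{H}_K$ is $\lesssim 1$ uniformly in $K$. For instance, $(h_K/\hat{h}_{K_t})(\hat{p}_K^2/p_K^2) \simeq 1$ and $(\max_F \eta_F) h_K/p^2 \simeq 1$, so
\begin{equation*}
\mathfrak{G}_K + \mathfrak{H}_K \lesssim \Norm{\frakE u}{H^{l}(\calK)}^2 + \Norm{\frakE \nablax u}{H^{l-1}(\calK)^d}^2 \lesssim \Norm{\frakE u}{H^{l}(\calK)}^2.
\end{equation*}

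Next, I would insert these bounds into~\eqref{EQN::ERROR-LDGN-PK} and~\eqref{EQN::ERROR-LDGp-PK}. Since $p_K = p$ and $\ell_K = \min\{p+1,l\} = \ell$ are now independent of $K$, the prefactor $h_K^{2\ell_K - 2}/p_K^{2l_K - 3}$ is bounded by $h^{2\ell - 2}/p^{2l - 3}$ uniformly. Thus both error quantities are controlled by
\begin{equation*}
\frac{h^{2\ell - 2}}{p^{2l - 3}} \sum_{K \in \Th} \Norm{\frakE u}{H^{l}(\calK)}^2.
\end{equation*}
To bound the remaining sum, I would invoke the bounded-overlap property of the covering $\Tc$ (Assumption~\ref{ASM::COVERING}): each point of $\IR^{d+1}$ lies in at most $N_{\QT}$ elements of $\Tc$, so $\sum_K \Norm{\frakE u}{H^l(\calK)}^2 \lesssim \Norm{\frakE u}{H^l(\IR^{d+1})}^2$. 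The stability of the Stein extension~\eqref{EQN::STEIN} (applied in the space--time setting, as in~\cite{Cangiani_Dong_Georgoulis:2017}) then yields $\Norm{\frakE u}{H^l(\IR^{d+1})}^2 \lesssim \Norm{u}{H^l(\QT)}^2$.

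Taking square roots finishes both estimates. The argument is entirely mechanical; the only mildly delicate point is keeping track that \emph{every} summand inside $\mathfrak{G}_K$ and $\mathfrak{H}_K$ is indeed $\mathcal{O}(1)$ under the stated assumptions (the two terms involving $\eta_F$ and the cross-term $(h_K^2/\hat{h}_{K_t})(\hat{p}_K^2/p^3)$ are the ones to check carefully, since they are the ones whose boundedness genuinely relies on the orthotropic scaling and on the form of the penalty~\eqref{EQN::STABILIZATION-TERM}).
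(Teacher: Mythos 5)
Your proposal is correct and is essentially the argument the paper intends: the corollary is stated as an immediate consequence of Theorem~\ref{THM::ERROR-ENERGY-PK}, obtained exactly by the substitutions you perform ($\hpK=p$, $\hhKt\simeq\hKt\simeq\hKx\simeq\hK$, $\eta_F\simeq p^2/\hK$) followed by the bounded-overlap summation and the stability of the Stein extension. The only cosmetic imprecision is the step $\Norm{\frakE \nablax u}{H^{l-1}(\calK)^d}\lesssim\Norm{\frakE u}{H^l(\calK)}$ (since $\frakE\nablax u\neq\nablax\frakE u$ in general); it is cleaner to keep the two terms separate and bound each sum by $\Norm{u}{H^l(\QT)}^2$ via~\eqref{EQN::STEIN}, which changes nothing in the conclusion.
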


\begin{remark}[Exponential convergence]
Unfortunately, the use of the Stein extension operator~$\frakE$ in the error analysis above prevents the proof of exponential convergence for the~$hp$-version of the method,
as the hidden constant in~\eqref{EQN::STEIN} depends in an unknown (and possibly bad) way on the Sobolev index~$\ell$.
\eremk
\end{remark}

\subsection{Quasi-Trefftz polynomials \label{SECT::QUASI-TREFFTZ}}
We consider a fixed polynomial degree~$p \in\IN$. For each~$K \in \Th$, given~$(\bx_K,t_K) \in K$, we define the \emph{polynomial quasi-Trefftz space} for the homogeneous equation~$\mathcal{H}u:=\dpt u -\nabla_{\bx} \cdot (\bk\nabla_{\bx}u) = 0$ in $K$ as
\begin{equation}
\label{DEF::QTK}
	\VpK(K) = \qT^p(K):=\big\{ v\in \IP^{p}(K) \, : \, D^{(\mi_{\bx},i_t)} \mathcal{H} v (\bx_K,t_K)=0 \quad \forall (\mi_{\bx},i_t)\in \IN^{d+1}_0, |\mi_{\bx}|+i_t\leq p-2 \big\},
\end{equation}
where the multi-index derivative~$D^{\mi_{\bx}, i_t} w := \partial_{x_1}^{i_1} \ldots \partial_{x_d}^{i_d} \partial_t^{i_t} w$.
The quasi-Trefftz space does not satisfy the inclusion condition in Assumption~\ref{ASM::TIME-DERIVATIVE-ASSUMPTION}.

The basis functions for the space~$\qT^p(K)$ can be constructed 
as discussed in~\cite[\S2.4]{IG_Moiola_Perinati_Stocker:2024}.
We briefly summarize the recursive procedure to compute the coefficients of the basis functions in Appendix~\ref{app:QTbasis}.
The dimension of the quasi-Trefftz space is given by
\begin{align} \label{eq:dimqt}
    \dim(\qT^p(K)) = \binom{p+d}{d} + \binom{p - 1 + d}{d},
\end{align} 
which, for large~$p$, behaves like~$p^d$, whereas~$\dim(\Pp{p}{K}) \approx p^{d+1}$ and~$\dim(\IQ^{p}(K)) \approx p^{d + 1}$. This represents a significant reduction of the total number of degrees of freedom.

Additionally, the quasi-Trefftz method can handle nonhomogeneous source term~$f$ by constructing an element-wise approximate particular solution~$u_{h,f}$ and homogenizing the system, see \cite[\S5]{IG_Moiola_Perinati_Stocker:2024} for more details.

We make the following stronger assumption on the mesh~$\Th$, which implies Assumption~\ref{ASSUMPTION::PRISMATIC-MESH} (see~\cite[Rem.~2.2]{Cangiani_Dong_Georgoulis:2017}).

\begin{assumption}[Uniform star-shapedness]
\label{ASM::UNIFOMR-STARSHAPEDNESS}
There exists~$ 0 < \rho \leq 1 $ independent of~$h$ such that each element~$K \in \Th$ is
star-shaped with respect to the ball centered at~$(\bx_K , t_K ) \in K$ and with radius~$\rho h_K$.
\end{assumption}

The approximation properties of the space~$\qT^p(K)$ come from the fact that it is defined so that the Taylor polynomial of order~$p + 1$ (and degree~$p$) centered at~$(\bx_K, t_K)$ of the continuous solution~$u$,
which we denote by~$T_{(\bx_K, t_K)}^{p + 1} u$, belongs to~$\qT^p(K)$; see~\cite[Thm.~2.4]{IG_Moiola_Perinati_Stocker:2024} and~\cite[Prop.~4]{Gomez_Moiola:2024}.
More precisely, if Assumption~\ref{ASM::UNIFOMR-STARSHAPEDNESS} holds, and~$u \in C^{p + 1}(K)$ for each~$K \in \Th$, then (see, e.g., \cite[Cor.~3.19]{Callahan_2010})
\begin{equation}
\label{EQN::ERROR-TAYLOR}
    \inf_{P\in\qT^p(K)}|u-P|_{C^q(K)} \leq 	|u-T_{(\bx_K,t_K)}^{p+1}[u]|_{C^q(K)} 
    \lp h_K^{p+1-q} |u|_{C^{p+1}(K)} \quad \forall q\in\mathbb{N}_0,\, q\le p.
\end{equation}

\begin{theorem}[Error estimate in the norm~$\Tnorm{\cdot}{\LDGN}$]
\label{THM::ERROR-ENERGY-QTK}
Given~$p\in\IN$ with~$p \geq 1$, let~$u \in X$ be the continuous solution to~\eqref{EQN::CONTINUOUS-WEAK-FORMULATION} and~$u_h\in \qT^p(\Th) + u_{h,f}$ be the solution to the LDG method~\eqref{EQN::VARIATIONAL-DG} with~$\VpK(K)$ as in~\eqref{DEF::QTK} and~$\MpK(K)$ such that Assumption~\ref{ASM::COMPATIBILITY-CONDITION} holds. 
Let also the stabilization function~$\eta_F$ be given by~\eqref{EQN::STABILIZATION-TERM}.
Under Assumption~\ref{ASM::UNIFOMR-STARSHAPEDNESS}, the local quasi-uniformity conditions in~\eqref{EQN::LOCAL-QUASI-UNIFORMITY}, and the orthotropic scaling in~\eqref{EQN::ORTHOTROPIC-RELATION}, if~$u_{|_K} \in C^{p+1}(K)$ for all~$K \in \Th$, then
\begin{equation}\label{EQN::ERROR-LDGN-QT}
    \Tnorm{u - u_h}{\LDGN}^2 \lp
 \sum_{K \in \Th} h_K^{2p} \abs{u}_{C^{p+1}(K)}^2,
\end{equation}
where the hidden constant may also depend on the maximum number of time-like facets of the elements in~$\Th$.
\end{theorem}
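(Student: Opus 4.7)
The plan is to apply the abstract \emph{a priori} bound~\eqref{EQ::STRANG_NEWTON} from Theorem~\ref{THM::ERROR-BOUND-ENERGY}, which is available here because $\qT^p(\Th)$ together with $\Mp(\Th)$ satisfies the local compatibility condition in Assumption~\ref{ASM::COMPATIBILITY-CONDITION}. As test function I would take the piecewise Taylor polynomial $\vh|_K := \Taylor{(\bx_K,t_K)}{p+1}{u}$, which lies in $\qT^p(\Th)$ by construction (\cite[Thm.~2.4]{IG_Moiola_Perinati_Stocker:2024}). For this choice, the pointwise estimate~\eqref{EQN::ERROR-TAYLOR} controls every $C^q(K)$-seminorm of $u - \vh$ for $q \le p$, so the whole argument reduces to translating each component of $\Tnorm{u - \vh}{\LDGN}$ and of $|\Rh(u, \vh)|$ into a combination of $|u - \vh|_{C^q(K)}$ for $q \in \{0, 1, 2\}$ multiplied by appropriate geometric factors.

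First I would bound $\Tnorm{u - \vh}{\LDGN}^2 = \SemiNorm{u - \vh}{\J}^2 + \Tnorm{u - \vh}{\LDG}^2 + \Tnorm{\Nh(u - \vh)}{\LDG}^2$. Since $u - \vh$ is smooth on each $K$, all the $L^2(F)$-traces appearing in $\SemiNorm{u - \vh}{\J}$ and in the jump/boundary contributions to $\Tnorm{u - \vh}{\LDG}$ can be majorized by $|F|^{\frac12}|u - \vh|_{C^0(K)}$ (with $|F| \lesssim h_K^d$ under Assumption~\ref{ASM::UNIFOMR-STARSHAPEDNESS}), and analogous pointwise estimates for $\nabla_\bx(u - \vh)$. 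The volume term $\Norm{\sqrt{\bk}\nablaxLDG(u - \vh)}{L^2(\QT)^d}$ is split via the definition~\eqref{EQN::LDG-GRADIENT} of $\nablaxLDG$ and the lifting bound in Lemma~\ref{LEMMA::BOUND-Lh}, reducing it to a piecewise spatial gradient term bounded by $|K|^{\frac12}|u - \vh|_{C^1(K)}$ plus jump contributions already controlled. Combined with the explicit form~\eqref{EQN::STABILIZATION-TERM} of $\eta_F \simeq p^2/h_K$ (valid under~\eqref{EQN::LOCAL-QUASI-UNIFORMITY} and~\eqref{EQN::ORTHOTROPIC-RELATION}) and with~\eqref{EQN::ERROR-TAYLOR}, every such contribution yields the expected rate of order $h_K^{p}$. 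The Newton-potential term is then treated via Proposition~\ref{PROP::BOUND-Nh-LDG}: the volume part is immediate from $\sum_{K} \Norm{\dpt(u - \vh)}{L^2(K)}^2 \lp |K| \cdot |u - \vh|_{C^1(K)}^2 \lp h_K^{2p + d + 1} |u|_{C^{p+1}(K)}^2$; for the space-like traces rescaled by $\lambdah^{-\frac12} \simeq p\, h_K^{-\frac12}$ (from~\eqref{EQN::LambdaK} together with the local quasi-uniformity and the orthotropic scaling), pairing with $|F|^{\frac12}|u - \vh|_{C^0(K)}$ once again produces a rate $h_K^{p}$. Finally, $|\Rh(u, \vh)|$ is bounded via Lemma~\ref{LEMMA::INCONSISTENCY-BOUND}: since $\nabla_\bx \vh \in \MpK(K)$ by Assumption~\ref{ASM::COMPATIBILITY-CONDITION}, the $L^2$-optimality of $\PiO$ lets me replace $\nabla_{\bx,h} u - \PiO \nabla_{\bx,h} u$ by $\nabla_\bx u - \nabla_\bx \vh$ in $L^2(K)$, which I then transfer to $L^2(F)$-traces via a standard continuous trace inequality and~\eqref{EQN::ERROR-TAYLOR}.

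The main obstacle is to verify that the three competing $p$- and $h$-scalings, namely $\eta_F \simeq p^2/h_K$, $\lambdah^{-\frac12} \simeq p\, h_K^{-\frac12}$, and $|F|^{\frac12} \simeq h_K^{d/2}$, always combine with the Taylor bound so as to leave at least the factor $h_K^{2p}$ in every contribution; this works because the lowest-order $C^q$-errors in~\eqref{EQN::ERROR-TAYLOR} for $q \in \{0, 1\}$ carry exponents $h_K^{p+1}$ and $h_K^{p}$ that are generous enough to absorb a single $h_K^{-\frac12}$ or $h_K^{-1}$ rescaling while also supplying the $h_K^{d/2}$ (or $h_K^{(d+1)/2}$) from each facet or element measure. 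Summation over the facets of a fixed element produces a constant proportional to the maximum number of time-like facets per element, which accounts for the corresponding dependence in the hidden constant, while the $p$-dependence is absorbed by the $\lp$ notation inherited from~\eqref{EQN::ERROR-TAYLOR}. Assembling all these estimates and inserting them into~\eqref{EQ::STRANG_NEWTON} then yields~\eqref{EQN::ERROR-LDGN-QT}.
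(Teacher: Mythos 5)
Your proposal follows essentially the same route as the paper's proof in Appendix C.3: the same Taylor-polynomial interpolant inserted into the Strang-type bound~\eqref{EQ::STRANG_NEWTON}, with the lifting bound (Lemma~\ref{LEMMA::BOUND-Lh}), the Newton-potential bound (Proposition~\ref{PROP::BOUND-Nh-LDG}), and the inconsistency bound (Lemma~\ref{LEMMA::INCONSISTENCY-BOUND}) reducing every contribution to $C^q(K)$-seminorms of $u-\vh$ controlled by~\eqref{EQN::ERROR-TAYLOR}, followed by the same bookkeeping of the scalings of $\eta_F$ and $\lambdah$. The only cosmetic differences are that you retain the facet and element measure factors that the paper simply absorbs into the hidden constant, and that for the inconsistency term the paper applies the trace inequality to $\nablaxh u - \PiO \nablaxh u$ \emph{before} invoking the $L^2(K)$-best-approximation property of $\PiO$ (rather than after, which would require an extra triangle inequality and a discrete trace estimate for the polynomial part); both orderings yield the same bound.
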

\begin{proof}
The proof of this theorem is postponed to Appendix~\ref{APP::PROOF-ENERGY-QTK}.
\end{proof}

\subsection{Embedded Trefftz polynomials\label{SECT::EMBEDDED-TREFFTZ}}
The embedded Trefftz method, introduced in \cite{LS_IJMNE_2023}, circumvents the explicit construction of Trefftz basis functions by embedding the Trefftz space into some standard polynomial space. 
The construction of the embedding relies on solving a small element-wise singular value problem. 
The method can also handle nonhomogeneous source terms~$f$ by constructing an element-wise particular solution using the pseudo-inverse of the already computed singular value problem.
Compared to the quasi-Trefftz space considered in the previous section, no Taylor expansion of the coefficients or of the source term is needed.

Via the embedded procedure, the discrete space $\VpK(K)$ 
is chosen as a weak Trefftz space as follows:
\begin{equation}
\label{DEF::ETK}
    \VpK(K) = \eT^p(K) :=\{v\in \Pp{p}{K}\, : \,  \Pi^{p-2} (\mathcal{H} v) = 0 \text{ in } K\},
\end{equation}
where~$\Pi^{p-2}$ is the~$L^2(K)$-orthogonal projection operator in the space~$\Pp{p - 2}{K}$, and~$\mathcal{H}$ is as in the previous section. 

The choice of the projection operator is crucial for the approximation properties of the embedded Trefftz space.
Different choices for this operator have been discussed in~\cite[Ch.~3]{ma_ch}.
The choice made here is heuristic; however, in the numerical results in Section~\ref{SECT::NUMERICAL-EXPERIMETNS} below, we observe optimal convergence rates and the same reduction of the number of degrees of freedom as for the quasi-Trefftz space, i.e., the same space dimension for~$\eT^p(K)$ as given in \eqref{eq:dimqt}.
Deriving approximation properties for embedded Trefftz spaces is a nontrivial and problem-dependent task; see, e.g.,\cite{Lozinski_2019,LLSV_ARXIV_2024}.

\section{Numerical results\label{SECT::NUMERICAL-EXPERIMETNS}}
In this section, we present some numerical experiments in $(1 + 1)$  and $(2+1)$ dimensions
to validate the theoretical results and assess numerically some additional features of the proposed method. 
Some implementation details are given in Section \ref{SECT::IMPLEMENTATION}. In Section~\ref{SECT::CONDITIONING}, we study numerically the condition number of the stiffness matrix. The accuracy
of the method is tested in Sections~\ref{SECT::SMOOTH-SOLUCTION} and~\ref{SECT::SINGULAR-SOLUTION} for smooth and singular solution, respectively.

The LDG scheme has been implemented using \texttt{NGSolve} \cite{ngsolve} and \texttt{NGSTrefftz}~\cite{ngstrefftz}.\footnote{Reproduction material is available in \cite{gomez_2024_14191529}.}
In all the numerical experiments below, the stabilization parameter~$\eta^{\star}$ is set to~$10^{-1}$ and the weight parameter~$\alpha_F$ is set to~$1/2$ for all the time-like facets.

\subsection{Implementation details}\label{SECT::IMPLEMENTATION}
We denote by~$\MT,\, \D,\, B$, and~$\Su$ the matrices associated with the bilinear forms~$\mh(\cdot, \cdot)$,\ $\bdh(\cdot, \cdot)$,\ $\bh(\cdot, \cdot)$,\ $\suh(\cdot, \cdot)$, respectively. We also denote by~$\bq,\, \bu$ the vectors associated with the linear functionals~$\lqh(\cdot)$ and~$\luh(\cdot)$, respectively. The variational problem~\eqref{EQN::VARIATIONAL-DG} can be written in matrix form as
\begin{align*}
\D\Qh - B \Uh & = \bq,\\ 
 (\MT + \Su)\Uh + B^T \Qh & = \bu.
\end{align*}
The equivalent matrix formulation of the reduced variational formulation~\eqref{EQN::REDUCED-VARIATIONAL-DG} reads
\begin{equation}
\label{EQN::REDUCED-LINEAR-SYSTEM}
    \left(\MT + \A \right)\Uh = \bu - B^T\D^{-1} \bq,
\end{equation}
where~$\A = \Su + B^T\D^{-1}B$ and~$\mathcal{B} := \MT + \A$ are the matrix representation of the bilinear forms~$\Ah(\cdot, \cdot)$ and~$\Bh(\cdot, \cdot)$, respectively.

\begin{remark}[Implicit time-stepping through time-slabs]
\label{REM::TIME-SLABS}
If the mesh elements can be collected in $N$ time-slabs, i.e., sets of the form~$Q_n := \Omega\times (t_{n-1},t_n)$ with $0 = t_0 < t_1 <\dots < t_N = T$, then, the structure of the terms on the space-like facets resulting from the use of upwind numerical fluxes in the variational formulation \eqref{EQN::REDUCED-VARIATIONAL-DG}
allows us to compute the discrete solution on the time-slab~$Q_n$ from the discrete solution on the previous time-slab~$Q_{n - 1}$. 
Hence, the global linear system \eqref{EQN::REDUCED-LINEAR-SYSTEM} can be solved as a sequence of $N$ smaller linear systems of the form~$A^{(n)} U_h^{(n)} = l^{(n)}$ for $1\leq n\leq N$, where $l^{(n)}=B^{(n)} U_h^{(n-1)}$ for $2\leq n \leq N$.
This is equivalent to an implicit time-stepping through time-slabs.
\eremk
\end{remark}

\subsection{Conditioning\label{SECT::CONDITIONING}}
We first study numerically the condition number
of the stiffness matrix. 
To do so, we consider the model problem~\eqref{EQN::MODEL-PROBLEM} in~$(1 + 1)$ dimensions with homogeneous Dirichlet boundary conditions~$(\gD = 0)$ and~$\bk = 1$ on the space--time domain~$\QT = (0, 1)^2$. 

We compute the 2-condition number of the stiffness matrix 
defined in Remark~\ref{REM::TIME-SLABS} (which in this case is the same for all time slabs) for a sequence of meshes with uniform distributions along the space and time directions and~$\hKt = \hKx = 2^{-i}$, $i = 0, \dots, 6$, and uniform polynomial degrees~$p = 2, 3, 4$.
The results are shown in Figure~\ref{fig::cond}, where we observe an asymptotic growing behavior of order~$\mathcal{O}(h^{-1})$ for all four discrete spaces and 
polynomial degrees. Similar results were obtained for a space--time DG discretization of the linear Sch\"odinger equation in~\cite[\S4]{Gomez_Moiola_Perugia_Stocker:2023}.

We have employed Legendre bases for the tensor-product space. Since the use of monomial bases for standard and quasi-Trefftz polynomials leads to ill-conditioned stiffness matrices, we applied a Gram--Schmidt orthogonalization procedure to improve their conditioning. As for the embedded Trefftz space, the \emph{orthogonal embedding} used in its construction ensures that the condition number cannot exceed that for the underlying polynomial space (see~\cite[Lemma~1]{LS_IJMNE_2023}).
\begin{figure}[htb!]
	\begin{center}    
		\resizebox{0.75\linewidth}{!}{
			\begin{tikzpicture}
				\begin{groupplot}[%
					group style={%
						group name={my plots},
						group size=2 by 2,
						horizontal sep=2cm,
                        vertical sep=2.5cm,
					},
					legend style={
						legend columns=4,
						at={(0.88, 0.12)},
                       font=\footnotesize
					},
					ymajorgrids=true,
					grid style=dashed,
					cycle list name=colorscond,
					]
        \CycleNextGruoupPlotcond{cart}{cond}
        {LDGHeat1D_condplot_new.csv}
        {eoc_cond}{$\mathrm{cond}_2(A^{(n)})$}{Tensor-product polynomials~\S\ref{SECT::TENSOR-PRODUCT}}
         \CycleNextGruoupPlotcond{poly}{cond}
        {LDGHeat1D_condplot_new.csv}
        {eoc_cond}{$\mathrm{cond}_2(A^{(n)})$}{Standard polynomials~\S\ref{SECT::STANDARD-POLY}}
       \CycleNextGruoupPlotcond{qtrefftz}{cond}
        {LDGHeat1D_condplot_new.csv}
       {eoc_cond}{$\mathrm{cond}_2(A^{(n)})$}{Quasi-Trefftz polynomials~\S\ref{SECT::QUASI-TREFFTZ}} 
        \CycleNextGruoupPlotcond{embt}{cond}
        {LDGHeat1D_condplot_new.csv}
        {eoc_cond}{$\mathrm{cond}_2(A^{(n)})$}{Embedded Trefftz polynomials~\S\ref{SECT::EMBEDDED-TREFFTZ}}
		\end{groupplot}   
		\end{tikzpicture}}
	\end{center}   
\caption{
Condition number of the stiffness matrix in~$(1 + 1)$ dimensions for
the four discrete spaces presented 
in Section~\ref{SECT::DISCRETE-SPACES}.
The numbers in the yellow boxes are the empirical algebraic rates.}
\label{fig::cond}
\end{figure}

\subsection{Smooth solution\label{SECT::SMOOTH-SOLUCTION}}
We now focus on the convergence of the method. We consider the~$(2 + 1)$-dimensional 
problem~\eqref{EQN::MODEL-PROBLEM}  with~$\bk = \mathrm{Id}_2$ on the space--time domain~$\QT=(0, 1)^2 \times (0, 1)$. We 
consider homogeneous boundary conditions ($g_D = 0$), and set the initial condition~$u_0$ and the source term~$f$ such that the exact solution~$u$ 
to~\eqref{EQN::MODEL-PROBLEM} is given by
\begin{equation}\label{EQ::NUMEXP-SMOOTHSOL}
 u(x, y, t) = e^{-t} \sin(\pi x) \sin(\pi y)  \quad \text{in } (0,1)^2 \times (0,1).
\end{equation}

We consider the four discrete spaces presented 
in Section~\ref{SECT::DISCRETE-SPACES}, i.e., the tensor-product, standard, quasi-Trefftz, and embedded Trefftz polynomial spaces. 
The numerical results obtained are compared in terms of~$h$-convergence (Section~\ref{SECT::HCONV}) and $p$-convergence (Section~\ref{SECT::PCONV}).

\subsubsection{\texorpdfstring{$h$}{h}-convergence \label{SECT::HCONV}}
We consider a uniform degree distribution with~$p = 2,\, 3,\, 4$, a sequence of unstructured simplicial meshes in space, and uniform partitions along the time direction with~$h_t \simeq h_{\bx}$.

In Figure \ref{fig:hconv1+1D}, we show 
the errors obtained for the four discrete spaces presented in Section~\ref{SECT::DISCRETE-SPACES}.
We observe convergence rates of order~$\mathcal{O}(h^p)$ for the error in the energy norms~$\Tnorm{\cdot}{\LDGN}$ and~$\Tnorm{\cdot}{\LDGp}$, and of order~$\mathcal{O}(h^{p+1})$ for the error in the~$L^2(\QT)$ norm for all the discrete spaces.  
Although the tensor-product and the standard polynomial spaces are richer, 
no significant loss of accuracy is observed for the quasi-Trefftz and the embedded Trefftz spaces. 
Moreover, the advantages of the 
Trefftz-type spaces are shown in Section~\ref{SECT::PCONV}, where the error is compared in terms of the total number of degrees of freedom.
\begin{figure}[!htb]
	\begin{center}    
		\resizebox{\linewidth}{!}{
			\begin{tikzpicture}
				\begin{groupplot}[%
					group style={%
						group name={my plots},
						group size=3 by 3,
						horizontal sep=3em,
						vertical sep=3em,
					},
					legend style={
						legend columns=1,
						at={(0.98,0.37)},
					},
					ymajorgrids=true,
					grid style=dashed,
					cycle list name=colorsh,
					]
				\CycleNextGruoupPloth{2}{error_LDGN}{LDGHeat2d2.csv}{eoc_LDGN}{$\Tnorm{u-\uh}{\LDGN}$}{$p=2$}{}
				\CycleNextGruoupPloth{3}{error_LDGN}{LDGHeat2d2.csv}{eoc_LDGN}{}{$p=3$}{}
				\CycleNextGruoupPloth{4}{error_LDGN}{LDGHeat2d2.csv}{eoc_LDGN}{}{$p=4$}{}
				\CycleNextGruoupPloth{2}{error_LDGp}{LDGHeat2d2.csv}{eoc_LDGp}{$\Tnorm{u-\uh}{\LDGp}$}{}{}
				\CycleNextGruoupPloth{3}{error_LDGp}{LDGHeat2d2.csv}{eoc_LDGp}{}{}{}
				\CycleNextGruoupPloth{4}{error_LDGp}{LDGHeat2d2.csv}{eoc_LDGp}{}{}{}
				\CycleNextGruoupPloth{2}{error_L2}{LDGHeat2d2.csv}{eoc_L2}{$\Norm{u-\uh}{L^2(\QT)}$}{}{$h$}
				\CycleNextGruoupPloth{3}{error_L2}{LDGHeat2d2.csv}{eoc_L2}{}{}{$h$}
				\CycleNextGruoupPloth{4}{error_L2}{LDGHeat2d2.csv}{eoc_L2}{}{}{$h$}
				\end{groupplot}   
		\end{tikzpicture}}
	\end{center}  
 \caption{$h$-convergence for the $(2 + 1)$-dimensional problem with exact solution~$u$ in~\eqref{EQ::NUMEXP-SMOOTHSOL}.
 The error is measured in the norms~$\Tnorm{\cdot}{\LDGN}$, $\Tnorm{\cdot}{\LDGp}$, and $\Norm{\cdot}{L^2(\QT)}$, in the corresponding rows.
 The columns correspond to polynomial degree $p=2,3,4$. 
 The numbers in the yellow boxes are the empirical algebraic convergence rates corresponding to the embedded Trefftz space.}
 \label{fig:hconv1+1D}
\end{figure}

\subsubsection{\texorpdfstring{$p$}{p}-convergence}\label{SECT::PCONV}
We now study the 
$p$-version of the method, i.e., when increasing the polynomial degree~$p$ for a fixed space--time mesh.
We denote by~$\mathrm{N}_{\mathrm{DoFs}}$ 
the total number of degrees of freedom. 

In Figure~\ref{fig:pconv1+1D}, we compare the errors obtained 
with the four choices of the discrete space~$\Vp(\Th)$ analyzed in Section~\ref{SECT::DISCRETE-SPACES}, for a coarse space--time mesh and~$p = 2, \ldots, 6$. We see that the quasi-Trefftz and the embedded Trefftz versions of the 
method lead to a higher accuracy for comparable number of degrees of freedom, especially for high polynomial degrees~$p$.
Moreover, we observe exponential decay of the error of order~$\mathcal{O}(e^{-b\sqrt{\mathrm{N}_{\mathrm{DoFs}}}})$ for the quasi-Trefftz and embedded Trefftz polynomial spaces, whereas only exponential decay of the error of order $\mathcal{O}(e^{-c\sqrt[3]{\mathrm{N}_{\mathrm{DoFs}}}})$ is expected for the tensor-product and the standard polynomial spaces. 
\begin{figure}[ht!]
	\begin{center}    
		\resizebox{\linewidth}{!}{
			\begin{tikzpicture}
				\begin{groupplot}[%
					group style={%
						group name={my plots},
						group size=3 by 1,
						horizontal sep=5em,
      	                vertical sep=5em,
					},
					legend style={
						legend columns=1,
						at={(0.98,0.98)},
					},
					ymajorgrids=true,
					grid style=dashed,
					cycle list name=colorsp,
					]
\CycleNextGruoupPlotdofs{error_LDGN}{LDGHeat2D2t_pplot.csv}{$\Norm{u-\uh}{\LDGN}$}
\CycleNextGruoupPlotdofs{error_LDGp}{LDGHeat2D2t_pplot.csv}{$\Norm{u-\uh}{\LDGp}$}
\CycleNextGruoupPlotdofs{error_L2}{LDGHeat2D2t_pplot.csv}{$\Norm{u-\uh}{L^2(\QT)}$}
\end{groupplot}   
 \end{tikzpicture}}
	\end{center}   
	 \caption{$p$-convergence in the norms~$\Norm{\cdot}{\LDGN}$ (left panel), $\Norm{\cdot}{\LDGp}$ (central panel), and~$\Norm{\cdot}{L^2(\QT)}$ (right panel) for the $(2+ 1)$-dimensional problem with exact solution~$u$ in~\eqref{EQ::NUMEXP-SMOOTHSOL}.}
 \label{fig:pconv1+1D}
\end{figure}

\subsection{Singular solutions\label{SECT::SINGULAR-SOLUTION}}
In this section, we study the convergence of the $h$- and $hp$-versions of the method for singular solutions.
In Section~\ref{SECT::SINGULAR-SOLUTION}, we consider a~$(2+1)$-dimensional problem with an initial layer, and, in Section~\ref{SECT::INCOMPATIBLE}, we consider a~$(1 + 1)$-dimensional problem with incompatible initial and boundary conditions.

\subsubsection{Singularity at initial time
\label{SECT::INITIAL-LAYER}}
First, we consider the 
numerical experiment in~\cite[\S 6.2]{Cangiani_Dong_Georgoulis:2017}, where the exact solution has an initial layer.
Let the space--time cylinder~$\QT = (0, 1)^2 \times (0, 0.1)$, 
$\bk = \mathrm{Id}_2$, $g_D = 0$, and~$u_0$ and~$f$ be such that the exact solution~$u$ to~\eqref{EQN::MODEL-PROBLEM} is given by
\begin{equation}\label{eq::singular_solution}
    u(x, y, t) = t^{\alpha} \sin(\pi x) \sin(\pi y) \qquad (\alpha = 0.75),
\end{equation} 
which belongs to the space~$H^{5/4 - \varepsilon}(0, T; C^{\infty}(\Omega))$ for all~$\varepsilon > 0$.

For the $h$-version of the method, we consider a uniform degree distribution with~$p = 2$, a
sequence of unstructured simplicial meshes in space, and uniform partitions along the time direction with~$h_t \simeq h_{\bx}$.
As for the~$hp$-version of the method, we employ 
a refinement strategy similar to the one used in~\cite[Example~2]{Cangiani_Dong_Georgoulis:2017}.
We first define a class of temporal meshes with~$t_n = \sigma^{N-n}\times 0.1$, 
$n = 1, \dots, N$, geometrically
graded towards $t_0 = 0$ with mesh grading factor~$0 < \sigma < 1$. 
Let~$\mu$ be 
a polynomial order factor, we consider temporally varying polynomial degrees,
starting from $p = 2$ on the elements belonging to the initial time slab $Q_1= \Omega \times (0, t_1)$, and linearly increasing~$p$ when moving away from~$t = 0$, according to $\pK = \lfloor \mu (n + 1)\rfloor$ for the elements~$K \in \Th$ that belong to the time slab $Q_n = \Omega \times (t_{n - 1}, t_n)$ for $n = 2, \dots, N$.
We choose~$\sigma = 0.25$, $\mu = 1$, and a fixed spatial mesh with~$h_{\bx} \approx 0.25$.
The results obtained with the~$h$- and $hp$-versions of the method are shown in Figure \ref{fig:initiallayer}. 
The $h$-version of the method exhibits only an algebraic decay of the error, whereas exponential decay  with respect to the fourth root of the total number of degrees of freedom is observed for the~$hp$-version of the method. No significant differences are observed between the four choices of discrete spaces.

\begin{figure}[ht!]
\begin{center}    
\resizebox{0.4\linewidth}{!}{
\begin{tikzpicture}
\begin{groupplot}[%
	group style={%
	group name={my plots},
	group size=2 by 1,
	vertical sep=3cm,
	horizontal sep=3cm,
					},
					legend style={
					legend columns=1,
                    legend pos=south east
					},
					ymajorgrids=true,
					grid style=dashed,
					cycle list name=colorshp,
					]    
  	\nextgroupplot[ymode=log, ylabel={$\Norm{u-\uh}{L^2(\QT)}$},xlabel={$\sqrt[4]{\mathrm{N}_{\mathrm{DoFs}}}$}
   ]
   \foreach \method in {cart,poly,qtrefftz,embt}{
\addplot+[discard if not={method}{\method}] table  [x=totaldofs4, y=error_L2, col sep=comma]
{initiallayermine.csv};
}
    \foreach \method in {cart,poly,qtrefftz,embt}{
	\addplot+[discard if not={method}{\method}] table  [x=totdofs4, y=error_L2, col sep=comma] 
{LDGHeat2D3t_hsingularmine.csv};}
    \legend{$\IQ$,$\IP$,$\qT$,$\eT$}
\end{groupplot}      
	\end{tikzpicture}}
	\end{center} 
 \caption{$h$-convergence with polynomial degree~$p = 2$ (dashed lines) and $hp$-convergence (continuous lines) of the method in the norm~$\Norm{\cdot}{L^2(\QT)}$ for the~$(2 + 1)$-dimensional problem with exact solution~$u$ in~\eqref{eq::singular_solution}.}
    \label{fig:initiallayer}
\end{figure}

\subsubsection{Singularity at the interface of the initial and boundary conditions\label{SECT::INCOMPATIBLE}}
We now consider the 
numerical experiment in~\cite[\S7.1]{Schotzau_Schwab:2000} (see also~\cite[\S5.4.1]{Gomez_Mascotto_Moiola_Perugia_2024} and~\cite[\S4]{Gomez_Mascotto_Perugia_2024}), a problem
with incompatible initial and boundary conditions.
We consider the heat equation~\eqref{EQN::MODEL-PROBLEM} in the space--time domain~$\QT = (0, 1) \times (0, 1)$ with 
$\bk=1$ and zero source term $(f = 0)$. For the initial condition~$u_0 = 1$ and homogeneous Dirichlet boundary conditions~$(g_D = 0)$, the exact solution~$u$ can be represented by the following Fourier series: 
\begin{equation}
\label{eq::geometricxt}
    u(x,t) = \sum_{n=0}^{\infty} \frac{4}{(2n+1)\pi} \sin((2n+1)\pi x) \exp(-(2n+1)^2\pi^2 t).
\end{equation} 
The above function~$u$ belongs to the space~$L^2(0, T; H^{3/2 - \varepsilon}(\Omega) \cap H_0^1(\Omega)) \cap H^{3/4 - \varepsilon}(0, T; L^2(\Omega))$ for all~$\varepsilon > 0$, 
and its time derivative~$\partial_x u$ belongs to~$L^2(0, T; H^{1/2-\varepsilon}(\Omega)) \cap H^{1/4 - \varepsilon}(0, T; L^2(\Omega))$ for all~$\varepsilon > 0$.
The series is truncated at~$n = 500$
for the computation of the error.

For the $h$-version of the method, we consider a mesh sequence with uniform partitions along the space and time directions and~$\hKt = \hKx = 2^{-i}$, $i=2, \dots, 8$, and uniform polynomial degree~$p = 2$.
In Figure \ref{fig::incompatible_algebraic}, we observe convergence rates of order~$\mathcal{O}(h^{1/4})$ for the error in the norm $\Norm{\cdot}{\LDG}$, and of order~$\mathcal{O}(h^{3/4})$ for the error in the norm~$\Norm{\cdot}{L^2(\QT)}$, which are in agreement with the regularity of~$u$ and~$\partial_x u$. 
 
As for the~$hp$-version of the method, we employ the 
refinement strategy  
in~\cite[\S4.2]{Gomez_Mascotto_Perugia_2024}.
We consider a sequence of space–time meshes geometrically graded towards $x = 0$, $x = 1$ and $t = 0$ with grading factors $\sigma_x = \sigma_t = 0.35$.
For a given number of $N$ time slabs we set $t_n=\sigma_t^{N-n}$, for $n = 1,\dots, N$, and compute until final time $t_N=1$.
Furthermore, we linearly increase the polynomial degrees when moving away from~$t = 0$.
For the elements~$K \in \Th$ that belong to the time slab~$Q_n = \Omega \times (t_{n - 1}, t_n)$ we set $\pK = n + 1$, for $n = 1,\dots, N$.
The results obtained with the $h$- and $hp$-version of the method are 
presented in Figure~\ref{fig::incompatible_hp}. 
The $h$-version of the method exhibits an algebraic decay of the error, whereas exponential decay of the errors is observed for the $hp$-version with respect to the cubic root of the total number of degrees of freedom. As for the numerical experiment in Section~\ref{SECT::INITIAL-LAYER}, no significant differences are observed
for the four choices of discrete spaces.
\begin{figure}[ht!]
	\begin{center}    
		\resizebox{0.75\linewidth}{!}{
			\begin{tikzpicture}
				\begin{groupplot}[%
					group style={%
						group name={my plots},
						group size=3 by 2,
						vertical sep=6em,
						horizontal sep=6em,
					},
					legend style={
					legend columns=1,
					at={(0.25,0.98)},
					},
					ymajorgrids=true,
					grid style=dashed,
					cycle list name=colorshsing,
					]    
     \nextgroupplot[ymode=log,xmode=log, ylabel={$\Norm{u-\uh}{\LDG}$},xlabel={$h$}]
\foreach \method in {cart,poly,qtrefftz,embt}{
\addplot+[discard if not={method}{\method},discard if ={h}{1.414213562373091},discard if ={h}{0.7071067811865455}] table  [x=h, y=error_LDG, col sep=comma] 
      {LDGHeat1D3t_hsingularmine.csv};
      }
      \foreach \method in {cart,poly,qtrefftz,embt}{
           	\addplot+[discard if not={method}{\method},discard if ={h}{1.414213562373091},discard if ={h}{0.7071067811865455},discard if={eoc_LDG}{0}, only marks,
	visualization depends on=\thisrow{eoc_LDG} \as \labela,
	nodes near coords=\pgfmathprintnumber{\labela}
	,
	every node near coord/.append style={
		black,
		draw=yellow!30,
		ellipse,
		fill=yellow!30,
		inner sep=1pt,
		xshift=3ex,
		yshift=3ex,
		scale=0.5,/pgf/number format/fixed,
        /pgf/number format/precision=2,/pgf/number format/fixed zerofill}
	] table [x=h, y=error_LDG, col sep=comma] 
  {LDGHeat1D3t_hsingularmine.csv};
  }
      \legend{$\IQ$,$\IP$,$\qT$,$\eT$}
\nextgroupplot[ymode=log,xmode=log, ylabel={$\Norm{u-\uh}{L^2(\QT)}$},xlabel={$h$}]
					\foreach \method in {cart,poly,qtrefftz,embt}{
						\addplot+[discard if not={method}{\method},discard if ={h}{1.414213562373091},discard if ={h}{0.7071067811865455}] table  [x=h, y=error_L2, col sep=comma] 
      {LDGHeat1D3t_hsingularmine.csv};
      }
      \foreach \method in {cart,poly,qtrefftz,embt}{
           	\addplot+[discard if not={method}{\method},discard if ={h}{1.414213562373091},discard if ={h}{0.7071067811865455},discard if={eoc_L2}{0}, only marks,
	visualization depends on=\thisrow{eoc_L2} \as \labela,
	nodes near coords=\pgfmathprintnumber{\labela}
	,
	every node near coord/.append style={
		black,
		draw=yellow!30,
		ellipse,
		fill=yellow!30,
		inner sep=1pt,
		xshift=3ex,
		yshift=3ex,
		scale=0.5,/pgf/number format/fixed,
        /pgf/number format/precision=2,/pgf/number format/fixed zerofill}
	] table [x=h, y=error_L2, col sep=comma] 
  {LDGHeat1D3t_hsingularmine.csv};
  }
      \legend{$\IQ$,$\IP$,$\qT$,$\eT$}
\end{groupplot}      
	\end{tikzpicture}}
	\end{center} 
 \caption{$h$-convergence of the method in the norms $\Tnorm{\cdot}{\LDG}$ and $\Norm{\cdot}{L^2(\QT)}$ for the $(1 + 1)$-dimensional problem with exact solution~$u$ in~\eqref{eq::geometricxt}. The numbers in the yellow boxes are the empirical
algebraic convergence rates.}
 \label{fig::incompatible_algebraic}
\end{figure}

\begin{figure}[ht!]
\begin{center}    
\resizebox{0.4\linewidth}{!}{\begin{tikzpicture}
\begin{groupplot}[%
					group style={%
						group name={my plots},
						group size=2 by 1,
						vertical sep=3cm,
						horizontal sep=3cm,
					},
					legend style={
					legend columns=1,
					legend pos=north east,
					},
					ymajorgrids=true,
					grid style=dashed,
					cycle list name=colorshp,
                    ymin=1e-4, ymax=0.2
					]    
  	\nextgroupplot[ymode=log, ylabel={$\Norm{u-\uh}{L^2(\QT)}$},xlabel={$\sqrt[3]{\mathrm{N}_{\mathrm{DoFs}}}$}]
		\foreach \method in {cart,poly,qtrefftz,embt}{
		\addplot+[discard if not={method}{\method}] table  [x=totaldofs3, y=error_L2, col sep=comma] 
     {LDGHeat1D_incompatiblehp.csv};
  }
    \foreach \method in {cart,poly,qtrefftz,embt}{
    \addplot+[dashed, discard if not={method}{\method}] table  [x=totdofs3, y=error_L2, col sep=comma]   
    {LDGHeat1D3t_hsi.csv}
    ;}
   \legend{$\IQ$,$\IP$,$\qT$,$\eT$}
\end{groupplot}      
	\end{tikzpicture}}
	\end{center} 
 \caption{
 $h$-convergence with polynomial degree~$p = 2$ (dashed lines) and $hp$-convergence (continuous lines) of the method in the norm~$\Norm{\cdot}{L^2(\QT)}$ for the~$(1 + 1)$-dimensional problem with exact solution~$u$ in~\eqref{eq::geometricxt}.
 }
\label{fig::incompatible_hp}
\end{figure}

\section{Conclusions \label{SECT::CONCLUSIONS}}
We introduced a space--time Local Discontinuous Galerkin 
method for the discretization of the heat equation. The method is well posed in any space dimension~$d\in \{1, 2, 3\}$  for very general prismatic space--time meshes and discrete spaces, 
even if polynomial inverse estimates
are not available. 
Moreover, for piecewise polynomial spaces satisfying an additional mild condition, we showed an inf-sup condition that provides an additional control of the time derivative of the discrete solution. 
We have also derived~$hp$-a priori
error bounds in some energy norms, and proven~$hp$-error estimates for
standard and tensor-product polynomial spaces, and~$h$-error estimates for quasi-Trefftz spaces.
In the numerical experiments presented, we have observed 
optimal convergence rates of order~$\mathcal{O}(h^{p})$ for the error in the energy norms, and of order~$\mathcal{O}(h^{p+1})$ for the error in the~$L^2(\QT)$ norm for the four choices of discrete spaces presented: tensor-product, standard, quasi-Trefftz, and embedded Trefftz polynomial spaces.
The two 
latter spaces allow for a significant reduction of the number of degrees of freedom.
We also have assessed the performance of the~$hp$-version of the method for some singular solutions.

\section*{Acknowledgements}
We are grateful to Ilaria Perugia (University of Vienna) for her very valuable
suggestions to improve the presentation of this work, and to Lorenzo Mascotto (University of Milano-Bicocca) for the helpful discussions on~$hp$-convergence.
The first author acknowledges support from the Italian Ministry of University and Research through the project PRIN2020 ``Advanced polyhedral discretizations of heterogeneous PDEs for multiphysics problems", and from the INdAM-GNCS through the
project CUP E53C23001670001.
The second author acknowledges support from the PRIN project ``ASTICE" (202292JW3F) and from INdAM-GNCS.
This research was funded in part by the Austrian Science Fund (FWF) 
\href{https://doi.org/10.55776/F65}{10.55776/F65} and 
\href{https://doi.org/10.55776/ESP4389824}{10.55776/ESP4389824}.
For open access purposes, the authors have applied a CC BY public copyright license to any author-accepted manuscript version arising from this submission.

\paragraph{Conflict of interest.}
The authors declare no competing interests.

\paragraph{Data availability.} Reproduction material is available in~\cite{gomez_2024_14191529}.

\appendix
\section{Construction of quasi-Trefftz basis}\label{app:QTbasis}

We describe the explicit construction of a basis for the polynomial quasi-Trefftz space defined in~\eqref{DEF::QTK}. 
For simplicity, we consider the case~$\bk = \kappa \mathrm{Id}$, for a strictly positive~$\kappa \in \mathbb{R}$, i.e., for the equation~$\dpt v -\kappa\Delta_\bx v =0$.

For all $(\mi_{\bx},i_t) \in \IN^{d+1}_0$ such that $|\mi_{\bx} |+i_t\leq p-2$, we have
\begin{align}\label{DerOp}
D^{(\mi_{\bx},i_t)}(\dpt v -\kappa \Delta_\bx v) =&D^{(\mi_{\bx},i_t)} \left(D^{(\bzero,1)}v-\kappa\sum_{j=1}^d D^{(2\ej,0)}v\right)=D^{(\mi_\bx,i_t+1)}v-\kappa\sum_{j=1}^d D^{(\mi_\bx+2\ej,i_t)}v.
\end{align}

Since each function $v\in\qT^p(K)$ is a polynomial of degree at most $p$, it can be expressed as a linear combination of scaled monomials centred at $(\bx_K,t_K)\in K$ as follows:
\begin{equation}\label{lincombmon}
	v(\bx,t)=\sum_{\substack{(\mj_\bx,j_t)\in\IN_0^{d+1},\\
 |\mj_\bx|+j_t\leq p}} a_{(\mj_\bx,j_t)} \left(\frac{\bx-\bx_K}{\hKx}\right)^{\mj_\bx}\left(\frac{t-t_K}{\hKt}\right)^{j_t}.
\end{equation}

Evaluating \eqref{DerOp} in $(\bx_K,t_K)$ and using that $D^{(\mj_\bx,j_t)} v(\bx_K,t_K)=a_{(\mj_\bx,j_t)} \frac{\mj_\bx ! j_t !}{\hKx^{\abs{\mj_\bx}}\hKt^{j_t}}$, we obtain
\begin{align*}
D^{(\mi_{\bx},i_t)}(\dpt v -\kappa \Delta_\bx v)(\bx_K,t_K) = & 
D^{(\mi_\bx,i_t+1)}v(\bx_K,t_K)-\kappa\sum_{j=1}^d D^{(\mi_\bx+2\ej,i_t)}v(\bx_K,t_K)\\=&
	a_{(\mi_\bx,i_t+1)}\frac{\mi_\bx ! (i_t+1) !}{\hKx^{\abs{\mi_\bx}}\hKt^{i_t+1}}-\kappa\sum_{j=1}^d a_{(\mi_\bx+2\ej,i_t)}\frac{(\mi_\bx+2\ej )! i_t !}{\hKx^{\abs{\mi_\bx}+2}\hKt^{i_t}}.
\end{align*}

Using the condition~$D^{(\mi_\bx,i_t)}	\mathcal{H} v(\bx_K, t_K) = 0$ for all $\abs{\mi_\bx}+i_t\leq p-2$ in \eqref{DEF::QTK}, 
from the above expression we can compute the term $a_{(\mi_\bx+2\e_1,i_t)}$ as follows:
\begin{align*}
    \begin{split}
a_{(\mi_\bx+2\e_1,i_t)}=&
\frac{\hKx^{\abs{\mi_\bx}+2}\hKt^{i_t}}{(\mi_\bx+2\e_1 )! i_t !}
	\Biggl( \frac{1}{\kappa}	a_{(\mi_\bx,i_t+1)}\frac{\mi_\bx ! (i_t+1) !}{\hKx^{\abs{\mi_\bx}}\hKt^{i_t+1}}-
	\sum_{j=2}^d a_{(\mi_\bx+2\ej,i_t)}\frac{(\mi_\bx+2\ej )! i_t !}{\hKx^{\abs{\mi_\bx}+2}\hKt^{i_t}}\Biggl)\\=&
	\frac{1}{(\mi_\bx^{(1)}+2)(\mi_\bx^{(1)}+1)}
	\Biggl( \frac{1}{\kappa}	a_{(\mi_\bx,i_t+1)}\frac{ \hKx^{2}(i_t+1) }{\hKt}-
	\sum_{j=2}^d a_{(\mi_\bx+2\ej,i_t)}(\mi_\bx^{(j)}+2)(\mi_\bx^{(j)}+1)\Biggl).
    \end{split}
\end{align*}
Therefore, any polynomial in the quasi-Trefftz space is uniquely determined by its values, and the values of the first-order partial derivatives~$\partial_{x_1}$ at the point~$(\bx_K, t_K)$.

Given~$\{\psi_\alpha\}_{\alpha = 1, \dots, \binom{p+d}{d}}$ and~$\{\phi_\beta\}_{\beta = 1, \ldots, \binom{p-1+d}{d}}$ bases of~$\IP^p(\IR^d)$ and $\IP^{p-1}(\IR^d)$, respectively, we define the following quasi-Trefftz functions:
\begin{equation*}
 \left\{b_{J}\in\qT^p(K)  \bigg\vert
	\begin{tabular}{ll}
 $	b_{J}(\bx_K^{(1)},\cdot)=	\psi_{J} \text{ and } \partial_{x_1}b_{J}(\bx_K^{(1)},\cdot)=0$ & $\text{ for } 1\leq J\leq \binom{p+d}{d}, $\\[0.2cm]
	$	b_{J}(\bx_K^{(1)},\cdot)= 0 \text{ and } \partial_{x_1} b_{J}(\bx_K^{(1)},\cdot)=\phi_{J-\binom{p+d}{d}}$  & $\text{ for } \binom{p+d}{d}<J\leq \binom{p+d}{d}+\binom{p-1+d}{d}.$
	\end{tabular}
	\right\}.
\end{equation*}
It can be proven that this set of functions forms a basis for the space $\qT^p(K)$ (see e.g. \cite[Prop. 5]{Gomez_Moiola:2024}, \cite[Prop. 2.6]{IG_Moiola_Perinati_Stocker:2024}).

\section{Proofs of the~\emph{a priori} error estimats}
\subsection{Proof of Theorem~\ref{THM::ERROR-ENERGY-QK} for the tensor-product polynomial space\label{APP::PROOF-ENERGY-QK}}
\paragraph{Proof of~\eqref{EQN::ERROR-LDGN-QK}.}
We denote by~$\dpth(\cdot)$ the broken first-order time derivative operator. The~\emph{a priori} error bound~\eqref{EQ::STRANG_NEWTON}, together with the definition in~\eqref{EQN::DG-NORMS-3} of the norm~$\Tnorm{\cdot}{\LDGN}$, the bound in Lemma~\ref{LEMMA::BOUND-Lh} for the lifting operator~$\Lu$, the bound in Proposition~\ref{PROP::BOUND-Nh-LDG} for the discrete Newton potential~$\Nh(\cdot)$, and the inconsistency bound in Lemma~\ref{LEMMA::INCONSISTENCY-BOUND}, gives
\begin{alignat}{3}
\nonumber
\Tnorm{u - \uh}{\LDGN} \lesssim 
& \ \SemiNorm{u - \vh}{\sf J} + \Norm{\nablaxh(u - \vh)}{L^2(\QT)} + \Norm{\dpth (u - \vh)}{L^2(QT)} \\
\nonumber 
& + \Norm{\jump{\lambdah^{-\frac12 } (u - \vh)}_t}{L^2(\Fspa)} + \Norm{\lambdah^{-\frac12} (u - \vh)}{L^2(\FO)} \\
\nonumber
& + \Norm{\eta_F^{\frac12} \jump{u - \vh}_{\sf N}}{L^2(\Ftime)^d} + \Norm{\eta_F^{\frac12} (u - \vh)}{L^2(\FD)} \\
\nonumber
& + \Norm{\eta_F^{-\frac12} \mvl{\bk (\nablaxh u - \PiO \nablaxh u)}_{1 - \alpha_F}}{L^2(\Ftime)^d} \\
\nonumber
& + \Norm{\eta_F^{-\frac12} \bk(\nablaxh u - \PiO \nablaxh u)}{L^2(\FD)}  
\\
\label{EQN::ERROR-LDGN-SPLIT-QK}
& =: I_1 + I_2 + I_3 + I_4 + I_5 + I_6 + I_7 + I_8 + I_9 & & \qquad \forall \vh \in \Vp(\Th).
\end{alignat}

Let~$\vh\in\Vp(\Th)$ be defined as
$$\vh{}_{|_K} = \pi_{\pK}^t \TPix u_{|_{K}} \qquad \forall K \in \Th.$$  
We now bound each term~$\{I_i\}_{i = 1}^9$ on the right-hand side of~\eqref{EQN::ERROR-LDGN-SPLIT-QK}.

Using the triangle inequality, the trace inequality~\eqref{EQN::TRACE-INEQUALITY-CONTINUOUS-1D}, 
estimates~\eqref{EQN::ESTIMATE-TPI-VOLUME} and~\eqref{EQN::ESTIMATE-TPI-STABILITY} for~$\TPix$, 
and estimate~\eqref{EQN::ESTIMATE-PI-TIME-VOLUME} for~$\pi_{\pK}^t$, we get 
\begin{align}
\nonumber
I_1^2 & = \SemiNorm{u-\vh}{\J}^2 
= \frac12 \big(\Norm{u - \vh}{L^2(\FT)}^2 + \Norm{\jump{u - \vh}_t}{L^2(\Fspa)}^2 + \Norm{u - \vh}{L^2(\FO)}^2\big) \\
\nonumber
& \lesssim \sum_{K \in \Th} \sum_{F \in \FKspace} \Norm{u - \pi_{\pK}^t \TPix u}{L^2(F)}^2 \\
\nonumber
& \lesssim \sum_{K \in \Th} \Big(
\frac{\pK}{\hKt} \Norm{u - \pi_{\pK}^t \TPix u}{L^2(K)}^2 + \frac{\hKt}{\pK} \Norm{\dpt u - \dpt \pi_{\pK}^t \TPix u}{L^2(K)}^2 \Big) \\
\nonumber
& \lesssim \sum_{K \in \Th} \Big( 
\frac{\pK}{\hKt} \Big(\Norm{u - \TPix u}{L^2(K)}^2 + \Norm{\TPix(u - \pi_{\pK}^t u)}{L^2(K)}^2 \Big) \\
\nonumber
& \quad + \frac{\hKt}{\pK} \Big(\Norm{\dpt u - \TPix \dpt u}{L^2(K)}^2 + \Norm{\TPix (\dpt u - \dpt \pi_{\pK}^t u)}{L^2(K)}^2\Big)
\Big) \\
\nonumber
& \lesssim \sum_{K \in \Th} \Big( 
\frac{\pK}{\hKt} \Big(\Norm{u - \TPix u}{L^2(K)}^2 + \frac{\hKx^2}{\pK^2}\Norm{\frakEx u - \pi_{\pK}^t \frakEx u}{L^2(\Kt; H^1(\calKx))}^2 + \Norm{u - \pi_{\pK}^t u}{L^2(K)}^2 \Big) \\
\nonumber
& \quad + \frac{\hKt}{\pK} \Big(\Norm{\dpt u - \TPix \dpt u}{L^2(K)}^2 + \frac{\hKx^2}{\pK^2}\Norm{\dpt \frakEx u - \dpt \pi_{\pK}^t \frakEx u}{L^2(\Kt; H^1(\calKx))}^2 + \Norm{\dpt u - \dpt \pi_{\pK}^t u}{L^2(K)}^2\Big)
\Big) \\
\nonumber
& \lesssim \sum_{K \in \Th} \Big(
\frac{\hKx^{2\ellK}}{\hKt \pK^{2 l_K - 1}} \Norm{\frakEx u}{L^2(\Kt; H^{l_K}(\calKx))}^2 + \frac{\hKx^2 \hKt^{2\tsK - 3}}{\pK^{2 s_K - 1}} \Norm{\frakEx u}{H^{s_K- 1}(\Kt; H^1(\calKx))}^2 \\
\nonumber
& \quad   + \frac{\hKt^{2\tsK - 1}}{\pK^{2 s_K - 1}} \Norm{u}{H^{s_K}(\Kt; L^2(\Kx))}^2 + \frac{\hKt \hKx^{2\ellK - 2}}{\pK^{2 l_K - 1}}\Norm{\frakEx u}{H^1(\Kt; H^{l_K - 1}(\calKx))}^2 \\
\label{EQN::ESTIMATE-I1-QK}
& \quad + \frac{\hKx^2 \hKt^{2\tsK - 3}}{\pK^{2s_K- 1}} \Norm{\frakEx u}{H^{s_K- 1}(\Kt; H^1(\calKx))}^2 \Big). 
\end{align}

Using the triangle inequality, estimate~\eqref{EQN::ESTIMATE-PI-TIME-VOLUME} for~$\pi_{\pK}^t$ and its stability properties, and estimate~\eqref{EQN::ESTIMATE-TPI-VOLUME} for~$\TPix$, we obtain
\begin{align}
\nonumber
I_2^2 & = \Norm{\nablaxh(u - \vh)}{L^2(\QT)}^2 = \sum_{K \in \Th} \Norm{\nablax (u - \pi_{\pK}^t \TPix u)}{L^2(K)^d}^2 \\
\nonumber 
& \lesssim \sum_{K \in \Th} \big(\Norm{\nablax u - \pi_{\pK}^t \nablax u}{L^2(K)^d}^2 + \Norm{\pi_{\pK}^t \nablax (u - \TPix u)}{L^2(K)^d}^2 \big) \\
\label{EQN::ESTIMATE-I2-QK}
& \lesssim \sum_{K \in \Th} \Big(\frac{\hKt^{2 \tsK- 2}}{\pK^{2 s_K- 2}} \Norm{u}{H^{s_K - 1}(\Kt; H^1(\Kx))}^2 + \frac{\hKx^{2\ellK - 2}}{\pK^{2 l_K - 2}} \Norm{\frakEx u}{L^2(\Kt; H^{l_K}(\calKx))}^2 \Big).
\end{align}

The third term on the right-hand side of~\eqref{EQN::ERROR-LDGN-SPLIT-QK} can be bounded using the triangle inequality, estimates~\eqref{EQN::ESTIMATE-TPI-VOLUME} and~\eqref{EQN::ESTIMATE-TPI-STABILITY} for~$\TPix$, and estimate~\eqref{EQN::ESTIMATE-PI-TIME-VOLUME} for~$\pi_{\pK}^t$ as follows:
\begin{align}
\nonumber
I_3^2 & =  \Norm{\dpth (u - \vh)}{L^2(\QT)}^2 = \sum_{K \in \Th} \Norm{\dpt (u - \pi_{\pK}^t \TPix u)}{L^2(K)}^2 \\
\nonumber 
& \lesssim \sum_{K \in \Th} \big(\Norm{\dpt u - \TPix \dpt u}{L^2(K)}^2 + \Norm{\TPix (\dpt u - \dpt \pi_{\pK}^t u)}{L^2(K)}^2 \big) \\
\nonumber
& \lesssim \sum_{K \in \Th} \Big( \frac{\hKx^{2\ellK - 2} }{\pK^{2 l_K - 2} }\Norm{\frakEx u}{H^1(\Kt; H^{l_K - 1}(\calKx))}^2 + \frac{\hKx^2}{\pK^2}\Norm{\dpt \frakEx u - \dpt \pi_{\pK}^t \frakEx u}{L^2(\Kt; H^1(\calKx))}^2 \\
\nonumber
& \quad + \Norm{\dpt (u - \pi_{\pK}^t u)}{L^2(K)}^2 \Big) \\
\nonumber
& \lesssim \sum_{K \in \Th} \Big( \frac{\hKx^{2\ellK - 2} }{\pK^{2 l_K - 2}}  \Norm{\frakEx u}{H^1(\Kt; H^{l_K- 1}(\calKx))}^2 + \frac{\hKx^2 \hKt^{2\tsK - 4}}{\pK^{2s_K - 2}} \Norm{\frakEx u}{H^{s_K - 1}(\Kt; H^1(\calKx))}^2 \\
\label{EQN::ESTIMATE-I3-QK}
& \quad + \frac{\hKt^{2\tsK - 2}}{\pK^{2s_K - 2}}  \Norm{u}{H^{s_K}(\Kt; L^2(\Kx))}^2\Big).
\end{align}

Proceeding as for the estimate of~$I_1$ in~\eqref{EQN::ESTIMATE-I1-QK} and using the definition of~$\lK$ in~\eqref{EQN::LambdaK}, we have
\begin{align}
\nonumber
I_4^2 + I_5^2 & = \Norm{\jump{\lambdah^{-\frac12 } (u - \vh)}_t}{L^2(\Fspa)}^2 + \Norm{\lambdah^{-\frac12} (u - \vh)}{L^2(\FO)}^2 \\
\nonumber
& \lesssim \sum_{K \in \Th} \sum_{F \in \FKspace} \frac{\hpK^2}{\hhKt} \Norm{u - \pi_{\pK}^t \TPix u}{L^2(F)}^2 \\
\nonumber
& \lesssim \sum_{K \in \Th} \frac{\hpK^2}{\hhKt} \Big(
\frac{\hKx^{2\ellK}}{\hKt \pK^{2 l_K - 1}} \Norm{\frakEx u}{L^2(\Kt; H^{l_K}(\calKx))}^2 + \frac{\hKx^2 \hKt^{2\tsK - 3}}{\pK^{2 s_K - 1}} \Norm{\frakEx u}{H^{s_K - 1}(\Kt; H^1(\calKx))}^2 \\
\nonumber
& \quad   + \frac{\hKt^{2\tsK - 1}}{\pK^{2 s_K - 1}} \Norm{u}{H^{s_K}(\Kt; L^2(\Kx))}^2 + \frac{\hKt \hKx^{2\ellK - 2}}{\pK^{2 l_K - 1}}\Norm{\frakEx u}{H^1(\Kt; H^{l_K- 1}(\calKx))}^2 \\
\label{EQN::ESTIMATE-I4-I5-QK}
& \quad + \frac{\hKx^2 \hKt^{2\tsK - 3}}{\pK^{2 s_K - 1}} \Norm{\frakEx u}{H^{s_K- 1}(\Kt; H^1(\calKx))}^2 \Big). 
\end{align}

Using the multiplicative trace inequality in~\eqref{EQN::TRACE-INEQUALITY-CONTINUOUS} for continuous functions and the Young inequality, we deduce that, for all~$w \in L^2(\Kt; H^1(\Kx))$ and~$K \in \Th$, it holds
\begin{equation}
\label{EQN::AUXILIARY-TRACE-INEQUALITY}
\Norm{w}{L^2(F)} \lesssim \frac{|\Fx|}{|\sKxF|} \Big(\pK \Norm{w}{L^2(\Ft; L^2(\sKxF))}^2 + \frac{\hKx^2}{\pK} \Norm{v}{L^2(\Ft; H^1(\sKxF))}^2\Big) \qquad \forall F = \Fx \times \Ft \in \FKtime,
\end{equation}
where~$\sKxF$ is as in Assumption~\ref{ASSUMPTION::PRISMATIC-MESH}.

Using the triangle inequality, the trace inequality~\eqref{EQN::AUXILIARY-TRACE-INEQUALITY},
Assumption~\ref{ASSUMPTION::PRISMATIC-MESH} on~$\Th$, estimate~\eqref{EQN::ESTIMATE-PI-TIME-VOLUME} for~$\pi_{\pK}^t$ and its stability properties, and estimate~\eqref{EQN::ESTIMATE-TPI-VOLUME} for~$\TPix$, we get
\begin{align}
\nonumber
I_6^2 + I_7^2 & = \Norm{\eta_F^\frac12 \jump{u-\vh}_{\bN}}{L^2(\Ftime)^{d}}^2 +\Norm{\eta_F^{\frac12} (u-\vh)}{L^2    (\FD)}^2
\\
\nonumber
& \lesssim \sum_{K \in \Th} \sum_{F \in \FKtime} \eta_F \frac{|\Fx|}{|\sKxF|}\Big( \pK \Norm{u - \pi_{\pK}^t \TPix u}{L^2(\Ft; L^2(\sKxF))}^2 \\
\nonumber
& \quad + \frac{\hKx^2}{\pK} \Norm{\nablax u - \pi_{\pK}^t \nablax \TPix u}{L^2(\Ft; L^2(\sKxF)^d)}^2 \Big) \\ 
\nonumber
& \lesssim 
\sum_{K \in \Th} \Big(\max_{F \in \FKtime} \eta_F \Big) \frac{1}{\hKx} \Big(\pK \Norm{u - \pi_{\pK}^t u}{L^2(K)}^2 + \pK \Norm{u - \TPix u}{L^2(K)}^2  \\
\nonumber
& \quad + \frac{\hKx^2}{\pK}\ \Norm{\nablax u - \pi_{\pK}^t \nablax u}{L^2(K)^d}^2 + \frac{\hKx^2}{\pK} \Norm{\nablax u - \nablax \TPix u}{L^2(K)^d}^2 \Big) \\
\nonumber
& \lesssim \sum_{K \in \Th} \Big(\max_{F \in \FKtime} \eta_F \Big) \Big( \frac{\hKt^{2\tsK}}{\hKx \pK^{2 s_K - 1}} \Norm{u}{H^{s_K}(\Kt; L^2(\Kx))}^2 + \frac{\hKx^{2\ellK - 1}}{\pK^{2 l_K- 1}} \Norm{\frakEx u}{L^2(\Kt; H^{l_K}(\calKx))}^2 \\
\label{EQN::ESTIMATE-I6-I7-QK}
& \quad + \frac{\hKx \hKt^{2\tsK - 2}}{\pK^{2 s_K - 1}} \Norm{u}{H^{s_K- 1}(\Kt; H^1(\Kx))}^2\Big).
\end{align}

As for the last two terms on the right-hand side of~\eqref{EQN::ERROR-LDGN-SPLIT-QK}, we use the triangle inequality, the trace inequality~\eqref{EQN::AUXILIARY-TRACE-INEQUALITY}, the polynomial trace inequality~\eqref{EQN::TRACE-INEQUALITY-1}, 
the definition in~\eqref{EQN::STABILIZATION-TERM} of the stabilization function~$\eta_F$, Assumption~\ref{ASSUMPTION::PRISMATIC-MESH} on~$\Th$, the stability properties of~$\PiO$ and~$\pi_{\pK}^t$, estimate~\eqref{EQN::ESTIMATE-TPI-VOLUME} for~$\TPix$, and estimate~\eqref{EQN::ESTIMATE-PI-TIME-VOLUME} for~$\pi_{\pK}^t$ to obtain
\begin{align}
\nonumber
I_8^2 + I_9^2 
& = \Norm{\eta_F^{-\frac12} \mvl{\bk (\nablaxh u - \PiO \nablaxh u)}_{1 - \alpha_F}}{L^2(\Ftime)^d}^2 + \Norm{\eta_F^{-\frac12} \bk(\nablaxh u - \PiO \nablaxh u)}{L^2(\FD)}^2 \\
\nonumber
& \lesssim  \Norm{\eta_F^{-\frac12} \mvl{\bk (\nablaxh u - \pi_{\pK}^t \TPix \nablaxh u)}_{1 - \alpha_F}}{L^2(\Ftime)^d}^2 + \Norm{\eta_F^{-\frac12} \bk(\nablaxh u - \pi_{\pK}^t \TPix \nablaxh u)}{L^2(\FD)}^2 \\
\nonumber
& \quad + \Norm{\eta_F^{-\frac12} \mvl{\bk \PiO (\nablaxh u - \pi_{\pK}^t \TPix \nablaxh u)}_{1 - \alpha_F}}{L^2(\Ftime)^d}^2 + \Norm{\eta_F^{-\frac12} \bk \PiO (\nablaxh u - \pi_{\pK}^t \TPix \nablaxh u)}{L^2(\FD)}^2 \\
\nonumber
& \lesssim \sum_{K \in \Th} \sum_{F = \Fx \times \Ft \in \FKtime} \!\!\!\!\! \eta_F^{-1} \Norm{\nablax u - \pi_{\pK}^t \TPix \nablax u}{L^2(F)^d}^2 \\
\nonumber
& \quad + \sum_{K \in \Th} \sum_{F = \Fx \times \Ft \in \FKtime} \!\!\!\!\! \eta_F^{-1} \frac{(\pK + 1)(\pK + d)}{d} \frac{|\Fx|}{|\sKxF|} \Norm{\PiO(\nablax u - \pi_{\pK}^t \TPix \nablax u)}{L^2(\Ft; L^2(\sKxF)^d)}^2 \\
\nonumber
& \lesssim \sum_{K \in \Th} \sum_{F = \Fx \times \Ft \in \FKtime} \eta_F^{-1} \frac{|\Fx|}{|\sKxF|}  \Big( \pK \Norm{\nablax u - \pi_{\pK}^t \TPix \nablax u}{L^2(\Ft; L^2(\sKxF)^d)}^2 \\
\nonumber 
& \quad + \frac{\hKx^2}{\pK} \Norm{\nablax u - \pi_{\pK}^t \TPix \nablax u}{L^2(\Ft; H^1(\sKxF)^d)}^2 \Big)  + \sum_{K \in \Th} \Norm{\PiO(\nablax u - \pi_{\pK}^t \TPix \nablax u)}{L^2(K)^d}^2 \\
\nonumber
& \lesssim \sum_{K \in \Th} \frac{1}{\pK^2} \Big( \pK \Norm{\nablax u - \pi_{\pK}^t \nablax u}{L^2(K)^d}^2 + \pK \Norm{\nablax u - \TPix \nablax u }{L^2(K)^d}^2 \\
\nonumber
& \quad + \frac{\hKx^2}{\pK} \Norm{\nablax u - \pi_{\pK}^t \nablax u}{L^2(\Kt; H^1(\Kx)^d)}^2 
+ \frac{\hKx^2}{\pK} \Norm{\nablax u - \TPix \nablax u}{L^2(\Kt; H^1(\Kx)^d)}^2 \Big) \\
\nonumber
& \quad + \sum_{K \in \Th} \Big(\Norm{\nablax u - \pi_{\pK}^t \nablax u }{L^2(K)^d}^2 + \Norm{\nablax u - \TPix \nablax u}{L^2(K)^d}^2 \Big) \\
\nonumber
& \lesssim \sum_{K \in \Th} \Big( 
\frac{\hKt^{2\tsK - 2}}{\pK^{2 s_K- 1}} \Norm{u}{H^{s_K- 1}(\Kt; H^1(\Kx))}^2 + \frac{\hKx^{2\ellK - 2}}{\pK^{2 l_K - 1}} \Norm{\frakEx \nablax u}{L^2(\Kt; H^{l_K - 1}(\calKx)^d)}^2 \\
\nonumber
& \quad + \frac{\hKx^2 \hKt^{2\theta_K}}{\pK^{2\vartheta_K + 3} } \Norm{u}{H^{\vartheta_{K}}(\Kt; H^2(\Kx))}^2 + \frac{\hKx^{2\ellK - 2}}{\pK^{2 l_K - 1}} \Norm{\frakEx \nablax u}{L^2(\Kt; H^{l_K- 1}(\calKx)^d)}^2  \\
\label{EQN::ESTIMATE-I8-I9-QK}
& \quad + \frac{\hKt^{2\tsK - 2}}{\pK^{2s_K - 2}} \Norm{u}{H^{s_K - 1}(\Kt; H^1(\Kx))}^2 + \frac{\hKx^{2\ellK - 2}}{\pK^{2 l_K - 2}} \Norm{\frakEx \nablax u}{L^2(\Kt; H^{l_K - 1}(\calKx)^d)}^2 
\Big).
\end{align}
Combining estimates~\eqref{EQN::ESTIMATE-I1-QK}, \eqref{EQN::ESTIMATE-I2-QK}, \eqref{EQN::ESTIMATE-I3-QK}, \eqref{EQN::ESTIMATE-I4-I5-QK}, \eqref{EQN::ESTIMATE-I6-I7-QK}, and~\eqref{EQN::ESTIMATE-I8-I9-QK} with~\eqref{EQN::ERROR-LDGN-SPLIT-QK}, we obtain~\eqref{EQN::ERROR-LDGN-QK}.

\paragraph{Proof of~\eqref{EQN::ERROR-LDGp-QK}.}
The~\emph{a priori} error bound~\eqref{EQ::STRANG_PLUS}, together with the definitions in~\eqref{EQN::DG-NORMS-4} and~\eqref{EQN::DG-NORMS-5} of the norms~$\Norm{\cdot}{\LDGp}$ and~$\Norm{\cdot}{\LDGs}$, respectively, the bound in Lemma~\ref{LEMMA::BOUND-Lh} for the lifting operator~$\Lu$, and the bound in Lemma~\ref{LEMMA::INCONSISTENCY-BOUND} for the inconsistency term~$\Rh(\cdot, \cdot)$ gives
\begin{alignat}{3}
\nonumber
\Tnorm{u - \uh}{\LDGp} \lesssim 
& \ \SemiNorm{u - \vh}{\sf J} + \Norm{\nablaxh(u - \vh)}{L^2(\QT)} + \Norm{\lambdah^{\frac12}\dpth (u - \vh)}{L^2(\QT)} \\
\nonumber
& +
\Norm{\lambdah^{-\frac12} (u-\vh)}{L^2(\QT)}^2 +\Norm{ (u-\vh)^{-}}{L^2(\Fspa)}^2+\Norm{u-\vh}{L^2(\FT)}^2\\
\nonumber
& +
\Norm{\eta_F^\frac12 \jump{u-\vh}_{\bN}}{L^2(\Ftime)^{d}}^2 +\Norm{\eta_F^{\frac12} (u-\vh)}{L^2(\FD)}^2\\
\nonumber
& + \Norm{\eta_F^{-\frac12} \mvl{\bk (\nablaxh u - \PiO \nablaxh u)}_{1 - \alpha_F}}{L^2(\Ftime)^d} \\
\nonumber
& + \Norm{\eta_F^{-\frac12} \bk(\nablaxh u - \PiO \nablaxh u)}{L^2(\FD)}  \\
\label{EQN::ERROR-LDGp-SPLIT-QK}
& =: I_1 + I_2 + J_1 + J_2 + J_3 + J_4 + I_6 + I_7 + I_8 + I_9 & & \forall \vh \in \Vp(\Th).
\end{alignat}

Let~$\vh\in\Vp(\Th)$ be defined as above. We focus on the new terms~$\{J_i\}_{i = 1}^4$ on the right-hand side of~\eqref{EQN::ERROR-LDGp-SPLIT-QK}.

Proceeding as for the estimate in~\eqref{EQN::ESTIMATE-I3-QK} of~$I_3$, we have
\begin{align}
\nonumber
J_1^2 & =\Norm{\lambdah^{\frac12}\dpth (u - \vh)}{L^2(\QT)}^2 \\
\nonumber
& \lesssim \sum_{K \in \Th} \Big(\frac{\hKt \hKx^{2\ellK - 2}}{\pK^{2 l_K}} \Norm{\frakEx u}{H^1(\Kt; H^{l_K- 1}(\calKx))}^2 + \frac{\hKx^2 \hKt^{2\tsK - 3}}{\pK^{2 s_K}} \Norm{\frakEx u}{H^{s_K- 1}(\Kt; H^1(\calKx))}^2 \\
\label{EQN::ESTIMATE-J1-QK}
& \quad + \frac{\hKt^{2\tsK - 1}}{\pK^{2 s_K}} \Norm{u}{H^{s_K}(\Kt; L^2(\Kx))}^2\Big).
\end{align}

The definition of~$\lK$ in~\eqref{EQN::LambdaK}, estimate~\eqref{EQN::ESTIMATE-TPI-VOLUME} for~$\TPix$, and estimate~\eqref{EQN::ESTIMATE-PI-TIME-VOLUME} for~$\pi_{\pK}^t$ lead to
\begin{align}
\nonumber
J_2^2 & = \Norm{\lambdah^{-\frac12}(u - \vh)}{L^2(\QT)}^2 \\
\label{EQN::ESTIMATE-J2-QK}
& \lesssim \sum_{K \in \Th} \frac{\hpK^2}{\hhKt} \Big( \frac{\hKt^{2\tsK}}{\pK^{2s_K}} \Norm{u}{H^{s_K}(\Kt; L^2(\Kx))}^2 + \frac{\hKx^{2\ellK}}{\pK^{2l_K}} \Norm{\frakEx u}{L^2(\Kt; H^{l_K}(\calKx))}^2 \Big).
\end{align}

Similarly as for the estimate in~\eqref{EQN::ESTIMATE-I1-QK} of~$I_1$, we have
\begin{align}
\nonumber
J_3^2 + J_4^2 & =\Norm{(u - \vh)^{-}}{L^2(\Fspa)}^2 + \Norm{u - \vh}{L^2(\FT)}^2 \\
\nonumber
& \lesssim \sum_{K \in \Th} \Big(
\frac{\hKx^{2\ellK}}{\hKt \pK^{2 l_K - 1}} \Norm{\frakEx u}{L^2(\Kt; H^{l_K}(\calKx))}^2 + \frac{\hKx^2 \hKt^{2\tsK - 3}}{\pK^{2 s_K - 1}} \Norm{\frakEx u}{H^{s_K- 1}(\Kt; H^1(\calKx))}^2 \\
\nonumber
& \quad   + \frac{\hKt^{2\tsK - 1}}{\pK^{2 s_K - 1}} \Norm{u}{H^{s_K}(\Kt; L^2(\Kx))}^2 + \frac{\hKt \hKx^{2\ellK - 2}}{\pK^{2 l_K - 1}}\Norm{\frakEx u}{H^1(\Kt; H^{l_K - 1}(\calKx))}^2 \\
\label{EQN::ESTIMATE-J3-J4-QK}
& \quad + \frac{\hKx^2 \hKt^{2\tsK - 3}}{\pK^{2 s_K - 1}} \Norm{\frakEx u}{H^{s_K - 1}(\Kt; H^1(\calKx))}^2 \Big). 
\end{align}

The error estimate~\eqref{EQN::ERROR-LDGp-QK} then follows by combining~\eqref{EQN::ERROR-LDGp-SPLIT-QK} with estimates~\eqref{EQN::ESTIMATE-I1-QK}, \eqref{EQN::ESTIMATE-I2-QK}, \eqref{EQN::ESTIMATE-I6-I7-QK}, \eqref{EQN::ESTIMATE-I8-I9-QK}, \eqref{EQN::ESTIMATE-J1-QK}, \eqref{EQN::ESTIMATE-J2-QK}, and~\eqref{EQN::ESTIMATE-J3-J4-QK}.

\subsection{Proof of Theorem~\ref{THM::ERROR-ENERGY-PK} for the standard polynomial space\label{APP::PROOF-ENERGY-PK}}
\paragraph{Proof of~\eqref{EQN::ERROR-LDGN-PK}.}
Let~$\vh\in\Vp(\Th)$ be defined as~$\vh{}_{|_K} = \TPi v_{|_{K}}$ for all~$K \in \Th$. 

We proceed as in the proof of Theorem~\ref{THM::ERROR-ENERGY-QK}. Therefore, it is enough to bound the terms~$\{I_i\}_{i = 1}^9$ on the right-hand side of~\eqref{EQN::ERROR-LDGN-SPLIT-QK}.

Using the approximation results in Lemma~\ref{LEMMA::ESTIMATE-TPI-XT} for~$\TPi$, the definition of~$\lK$ in~\eqref{EQN::LambdaK}, Assumption~\ref{ASM::SHAPE-REGULARITY}, and the triangle inequality, we get
\begin{align}
\nonumber
I_1^2 & = \SemiNorm{u-\vh}{\J}^2 
= \frac12 \big(\Norm{u-\vh}{L^2(\FT)}^2 + \Norm{\jump{u-\vh}_t}{L^2(\Fspa)}^2 +\Norm{u-\vh}{L^2(\FO)}^2\big) \\
\label{EQN::ESTIMATE-I1}
& \lesssim \sum_{K \in \Th} \sum_{F \in \FKspace} \Norm{u - \TPi u}{L^2(F)}^2 
\lesssim 
\sum_{K \in \Th}  \frac{\hK^{2\ellK - 1}}{\pK^{2 l_K- 1}} \Norm{\frakE u}{H^{l_K}(\calK )}^2, \\
\nonumber
I_2^2 + I_3^2 & = \Norm{\nablaxh(u - \vh)}{L^2(\QT)}^2 + \Norm{\dpth (u - \vh)}{L^2(\QT)}^2 \\
\label{EQN::ESTIMATE-I2-I3}
& = \sum_{K \in \Th} \Norm{u - \TPi u}{H^1(K)}^2 
\lesssim \sum_{K \in \Th} \frac{\hK^{2\ellK - 2}}{\pK^{2 l_K - 2}} \Norm{\frakE u}{H^{l_K}(\calK )}^2, \\
\nonumber
I_4^2 + I_5^2 & = \Norm{\jump{\lambdah^{-\frac12 } (u - \vh)}_t}{L^2(\Fspa)}^2 + \Norm{\lambdah^{-\frac12} (u - \vh)}{L^2(\FO)}^2 \\
\label{EQN::ESTIMATE-I4-I5}
& \lesssim \sum_{K \in \Th} \sum_{F \in \FKspace} \frac{\hpK^2}{\hhKt} \Norm{u - \TPi u}{L^2(F)}^2 \lesssim \sum_{K \in \Th} \frac{\hpK^2}{\hhKt} \frac{\hK^{2\ellK - 1}}{\pK^{2 l_K - 1}} \Norm{\frakE u}{H^{l_K}(\calK )}^2, \\
\nonumber
I_6^2 + I_7^2 & = \Norm{\eta_F^\frac12 \jump{u-\vh}_{\bN}}{L^2(\Ftime)^{d}}^2 +\Norm{\eta_F^{\frac12} (u-\vh)}{L^2(\FD)}^2
\\
\nonumber
& \lesssim \sum_{K \in \Th} \sum_{F \in \FKtime} \eta_F \Norm{u - \TPi u}{L^2(F)}^2 \\ 
\label{EQN::ESTIMATE-I6-I7}
& \lesssim 
\sum_{K \in \Th} \big(\max_{F \in \FKtime} \eta_F \big) \frac{\hK^{2\ellK - 1}}{\pK^{2 l_K - 1}} \Norm{\frakE u}{H^{l_K}(\calK )}^2.
\end{align}

As for the last two terms on the right-hand side of~\eqref{EQN::ERROR-LDGN-SPLIT-QK}, we use the triangle inequality, the trace inequality for simplices in~\eqref{EQN::TRACE-INEQUALITY-CONTINUOUS}, 
the polynomial trace inequality in~\eqref{EQN::TRACE-INEQUALITY-1}, the definition of the stabilization function~$\eta_F$ in~\eqref{EQN::STABILIZATION-TERM},
Assumptions~\ref{ASSUMPTION::PRISMATIC-MESH} and~\ref{ASM::SHAPE-REGULARITY} on~$\Th$, the fact that~$\diam(\sKxF) \le \hKx$ for all~$\sKxF$, the stability properties of~$\PiO$, and the estimates for~$\TPi$ in Lemma~\ref{LEMMA::ESTIMATE-TPI-XT} to obtain
\begin{align}
\nonumber
I_8^2 + I_9^2 
& = \Norm{\eta_F^{-\frac12} \mvl{\bk (\nablaxh u - \PiO \nablaxh u)}_{1 - \alpha_F}}{L^2(\Ftime)^d}^2 + \Norm{\eta_F^{-\frac12} \bk(\nablaxh u - \PiO \nablaxh u)}{L^2(\FD)}^2 \\
\nonumber
& \lesssim  \Norm{\eta_F^{-\frac12} \mvl{\bk (\nablaxh u - \TPi \nablaxh u)}_{1 - \alpha_F}}{L^2(\Ftime)^d}^2 + \Norm{\eta_F^{-\frac12} \bk(\nablaxh u - \TPi \nablaxh u)}{L^2(\FD)}^2 \\
\nonumber
& \quad + \Norm{\eta_F^{-\frac12} \mvl{\bk \PiO (\nablaxh u - \TPi \nablaxh u)}_{1 - \alpha_F}}{L^2(\Ftime)^d}^2 + \Norm{\eta_F^{-\frac12} \bk \PiO (\nablaxh u - \TPi \nablaxh u)}{L^2(\FD)}^2 \\
\nonumber
& \lesssim \sum_{K \in \Th} \sum_{F = \Fx \times \Ft \in \FKtime} \!\!\!\!\! \eta_F^{-1} \Norm{\nablax u - \TPi \nablax u}{L^2(F)^d}^2 \\
\nonumber
& \quad + \sum_{K \in \Th} \sum_{F = \Fx \times \Ft \in \FKtime} \!\!\!\!\! \eta_F^{-1} \frac{(\pK + 1)(\pK + d)}{d} \frac{|\Fx|}{|\sKxF|} \Norm{\PiO(\nablax u - \TPi \nablax u)}{L^2(\Ft; L^2(\sKxF)^d)}^2 \\
\nonumber
& \lesssim \sum_{K \in \Th} \sum_{F = \Fx \times \Ft \in \FKtime} \eta_F^{-1} \frac{|\Fx|}{|\sKxF|} \big(\Norm{\nablax u - \TPi \nablax u}{L^2(\Ft; L^2(\sKxF)^d)}^2 \\
\nonumber 
& \quad + \hKx^2 \Norm{\nablax u - \TPi \nablax u}{L^2(\Ft; H^1(\sKxF)^d)}^2 \big)  + \sum_{K \in \Th} \Norm{\PiO(\nablax u - \TPi \nablax u)}{L^2(K)^d}^2 \\
\nonumber
& \lesssim \sum_{K \in \Th} \Big(\frac{1}{\pK^2} \Norm{\nablax u - \TPi \nablax u}{L^2(K)^d}^2   + \frac{\hKx^2}{\pK^2} \Norm{\nablax u -  \TPi \nablax u}{L^2(\Kt; H^1(\Kx)^d)}^2 \\
\nonumber
& \quad +  \Norm{\nablax u - \TPi \nablax u}{L^2(K)^d}^2 \Big)  \\
\label{EQN::ESTIMATE-I8-I9}
& \lesssim 
\sum_{K \in \Th} \frac{\hK^{2\ellK - 2}}{\pK^{2 l_K- 2}}\Big(1 + \frac{1}{\pK^2}\Big) \Norm{\frakE \nablax u}{H^{l_K - 1}(\calK)^d}^2.
\end{align}

Combining estimates~\eqref{EQN::ESTIMATE-I1}, \eqref{EQN::ESTIMATE-I2-I3}, \eqref{EQN::ESTIMATE-I4-I5}, \eqref{EQN::ESTIMATE-I6-I7}, and~\eqref{EQN::ESTIMATE-I8-I9} with~\eqref{EQN::ERROR-LDGN-SPLIT-QK}, we obtain~\eqref{EQN::ERROR-LDGN-PK}.

\paragraph{Proof of~\eqref{EQN::ERROR-LDGp-PK}.}
Let~$\vh\in\Vp(\Th)$ be defined as above. 
As before, we only need to estimate the terms~$\{J_i\}_{i = 1}^4$ on the right-hand side of~\eqref{EQN::ERROR-LDGp-SPLIT-QK}.

Using the approximation results in Lemma~\ref{LEMMA::ESTIMATE-TPI-XT} for~$\TPi$ and Assumption~\ref{ASM::SHAPE-REGULARITY}, we get
\begin{align}
\label{EQN::ESTIMATE-J1}
J_1^2 & =\Norm{\lambdah^{\frac12}\dpth (u - \vh)}{L^2(\QT)}^2 \lesssim \sum_{K \in \Th} \frac{\hhKt}{\hpK^2}\frac{\hK^{2\ellK -2}}{\pK^{2 l_K-2}}  \Norm{\frakE u}{H^{l_K}(\calK )}^2 \lesssim \sum_{K \in \Th} \frac{\hK^{2\ellK-1}}{\pK^{2 l_K}}  \Norm{\frakE u}{H^{l_K}(\calK )}^2, \\
\label{EQN::ESTIMATE-J2}
J_2^2 & =\Norm{\lambdah^{-\frac12}(u - \vh)}{L^2(\QT)}^2 \lesssim \sum_{K \in \Th} \frac{\hpK^2}{\hhKt}\frac{\hK^{2\ellK}}{\pK^{2 l_K}} \Norm{\frakE u}{H^{l_K}(\calK )}^2, \\
\label{EQN::ESTIMATE-J3-J4}
J_3^2 + J_4^2 & =\Norm{ (u-\vh)^{-}}{L^2(\Fspa)}^2+\Norm{u-\vh}{L^2(\FT)}^2\lesssim \sum_{K \in \Th} \frac{\hK^{2\ellK -1}}{\pK^{2 l_K- 1}} \Norm{\frakE u}{H^{l_K}(\calK )}^2.
\end{align}

The error estimate~\eqref{EQN::ERROR-LDGp-PK} then follows by combining~\eqref{EQN::ERROR-LDGN-SPLIT-QK} with estimates~\eqref{EQN::ESTIMATE-I1}, \eqref{EQN::ESTIMATE-I2-I3}, \eqref{EQN::ESTIMATE-I6-I7}, \eqref{EQN::ESTIMATE-I8-I9}, \eqref{EQN::ESTIMATE-J1}, \eqref{EQN::ESTIMATE-J2}, and~\eqref{EQN::ESTIMATE-J3-J4}.

\subsection{Proof of Theorem~\ref{THM::ERROR-ENERGY-QTK} for the quasi-Trefftz polynomial space\label{APP::PROOF-ENERGY-QTK}}
Let~$\vh\in\Vp(\Th)$ be defined as~$\vh{}_{|_K} = T^{p+1}_{(\bx_K,t_K)}[u_{|_{K}}]$ for all~$K \in \Th$.
Using the approximation results in~\eqref{EQN::ERROR-TAYLOR} for~$T^{p+1}_{(\bx_K,t_K)}$, the definition of~$\lK$ in~\eqref{EQN::LambdaK}, the triangle inequality, 
the trace inequalities in~\cite[Thm.~1.6.6]{Brenner_Scott_2007} and~\cite[Lemma 2]{Moiola_Perugia_2018}, 
the definition in~\eqref{EQN::STABILIZATION-TERM} of~$\eta_F$, the local quasi-uniformity conditions in~\eqref{EQN::LOCAL-QUASI-UNIFORMITY}, and the orthotropic scaling in~\eqref{EQN::ORTHOTROPIC-RELATION}, we bound the terms on the right-hand side of~\eqref{EQN::ERROR-LDGN-SPLIT-QK} as follows:
\begin{align*}
I_1^2 & = \SemiNorm{u - \vh}{\J}^2 
= \frac12 \big(\Norm{u - \vh}{L^2(\FT)}^2 + \Norm{\jump{u - \vh}_t}{L^2(\Fspa)}^2 + \Norm{u-\vh}{L^2(\FO)}^2\big) 
 \\
 & \lp \sum_{K \in \Th}  (\hKt^{-1}\Norm{u-\vh}{L^2(K)}^2+\hKt \Norm{\dpt  (u-\vh)}{L^2(K)}^2)\\
 &\lp \sum_{K \in \Th} (\hK^{2p + 1} \SemiNorm{u}{C^{p+1}(K)}^2 + h_K^{2p + 1} \abs{u}_{C^{p+1}(K)}^2),\\
I_2^2 + I_3^2 & = \Norm{\nablaxh(u - \vh)}{L^2(\QT)}^2 + \Norm{\dpth (u - \vh)}{L^2(\QT)}^2 = \sum_{K \in \Th} \Norm{u - \vh}{H^1(K)}^2 \\
& \lp \sum_{K \in \Th}  \hK^{2p} \Norm{u}{C^{p+1}(K)}^2, \\
I_4^2 + I_5^2 & = \Norm{\jump{\lambdah^{-\frac12 } (u - \vh)}_t}{L^2(\Fspa)}^2 + \Norm{\lambdah^{-\frac12} (u - \vh)}{L^2(\FO)}^2 \\
& \lp \sum_{K \in \Th} \lambda_K^{-1}(\hKt^{-1}\Norm{u-\vh}{L^2(K)}^2+\hKt \Norm{\dpt  (u-\vh)}{L^2(K)}^2) \\
 &\lp \sum_{K \in \Th} \hK^{2p + 2} \SemiNorm{u}{C^{p+1}(K)}^2,\\
I_6^2 + I_7^2 & = \Norm{\eta_F^\frac12 \jump{u-\vh}_{\bN}}{L^2(\Ftime)^{d}}^2 +\Norm{\eta_F^{\frac12} (u-\vh)}{L^2(\FD)}^2
\\
& \lesssim \sum_{K \in \Th} \sum_{F \in \FKtime} \eta_F \Norm{u - \vh}{L^2(F)}^2 \\
& \lp \sum_{K \in \Th}\Big(\max_{F \in \FKtime} \eta_F \Big) 
(\hKx^{-1} \hK^{2p+2} \SemiNorm{u}{C^{p+1}(K)}^2 + \hKx \hK^{2p}\abs{u}_{C^{p+1}(K)}^2) \\
& \lp \sum_{K \in \Th} \hK^{2p}\abs{u}_{C^{p+1}(K)}^2,\\
I_8^2 + I_9^2 
& = \Norm{\eta_F^{-\frac12} \mvl{\bk(\nabla_{\bx, h} u - \PiO \nabla_{\bx, h} u)}_{1 - \alpha_F}}{L^2(\Ftime)^d}^2 + \Norm{\eta_F^{-\frac12} \bk(\nabla_{\bx, h} u - \PiO \nabla_{\bx, h} u) \cdot \bnOmega}{L^2(\FD)}^2 \\
& \lesssim \sum_{K \in \Th} \sum_{F \in \FKtime} \eta_{F}^{-1} \Norm{\nablax u - \PiO \nablax u}{L^2(F)^d}^2 \\
& \lp \sum_{K \in \Th} \hKx \big(\hKx^{-1} \Norm{\nablax u - \PiO \nablax u }{L^2(K)^d}^2 + \hKx \Norm{\nablax u - \PiO \nablax u}{L^2(\Kt; H^1(\Kx)^d)}^2 \big) \\
& \lp \sum_{K \in \Th} \hK^{2p} \SemiNorm{u}{C^{p + 1}(K)}^2.
\end{align*}

Therefore, combining the above estimates with~\eqref{EQN::ERROR-LDGN-SPLIT-QK}, we obtain~\eqref{EQN::ERROR-LDGN-QT}.
\end{document}